\newtheorem{theorem}{Theorem}[section]
\newtheorem{lemma}[theorem]{Lemma}
\newtheorem{proposition}[theorem]{Proposition}
\theoremstyle{definition}
\newtheorem{definition}[theorem]{Definition}
\newtheorem{example}[theorem]{Example}
\theoremstyle{remark}
\renewcommand{\theclaim}{\textup{\theclaim}}
\newtheorem*{acknowledgements}{Acknowledgements}
\numberwithin{equation}{section}
\def\openone
\newbox\ipbox
\newcommand{\ip}[2]{\left\langle #1\, , \,#2\right\rangle}
\newcommand{\diracb}[1]{\left\langle #1\mathrel{\mathchoice

{\setbox\ipbox=\hbox{$\displaystyle \left\langle\mathstrut
#1\right.$}

\vrule height\ht\ipbox width0.25pt depth\dp\ipbox}

{\setbox\ipbox=\hbox{$\textstyle \left\langle\mathstrut
#1\right.$}

\vrule height\ht\ipbox width0.25pt depth\dp\ipbox}

{\setbox\ipbox=\hbox{$\scriptstyle \left\langle\mathstrut
#1\right.$}

\vrule height\ht\ipbox width0.25pt depth\dp\ipbox}

{\setbox\ipbox=\hbox{$\scriptscriptstyle \left\langle\mathstrut
#1\right.$}

\vrule height\ht\ipbox width0.25pt depth\dp\ipbox}

}\right. }
\newcommand{\dirack}[1]{\left. \mathrel{\mathchoice

{\setbox\ipbox=\hbox{$\displaystyle \left.\mathstrut
#1\right\rangle$}

\vrule height\ht\ipbox width0.25pt depth\dp\ipbox}

{\setbox\ipbox=\hbox{$\textstyle \left.\mathstrut
#1\right\rangle$}

\vrule height\ht\ipbox width0.25pt depth\dp\ipbox}

{\setbox\ipbox=\hbox{$\scriptstyle \left.\mathstrut
#1\right\rangle$}

\vrule height\ht\ipbox width0.25pt depth\dp\ipbox}

{\setbox\ipbox=\hbox{$\scriptscriptstyle \left.\mathstrut
#1\right\rangle$}

\vrule height\ht\ipbox width0.25pt depth\dp\ipbox}

} #1\right\rangle}
\newcommand{\cj}[1]{\overline{#1}}
\newcommand{\bz}{\mathbb{Z}}
\newcommand{\br}{\mathbb{R}}
\newcommand{\bn}{\mathbb{N}}
\def\blfootnote{\xdef\@thefnmark{}\@footnotetext}
\renewcommand{\mod}{\operatorname{mod}}
\def\-{^{-1}}
\def\ty{\emptyset}
\def\Z{\mathbb{Z}}
 \numberwithin{equation}{section}
\begin{document}

\title [Fourier bases and Fourier frames on self-affine measures] {Fourier bases and Fourier frames on self-affine measures}

\author{Dorin Ervin Dutkay}
\address{[Dorin Ervin Dutkay] University of Central Florida\\
	Department of Mathematics\\
	4000 Central Florida Blvd.\\
	P.O. Box 161364\\
	Orlando, FL 32816-1364\\
U.S.A.\\} \email{Dorin.Dutkay@ucf.edu}

\author{Chun-Kit Lai}

\address{Department of Mathematics, San Francisco State University,
1600 Holloway Avenue, San Francisco, CA 94132.}

 \email{cklai@sfsu.edu}

 \author{Yang Wang}

\address{Department of Mathematics, Hong Kong University of Science and Technology, Hong Kong}

 \email{yangwang@ust.hk}

\thanks{}
\subjclass[2010]{Primary 42B05, 42A85, 28A25.}
\keywords{ Hadamard triples, Fourier frames, Spectral measures, Spectra.}

\begin{abstract}
This paper gives a review of the recent progress in the study of Fourier bases and Fourier frames on self-affine measures. In particular, we emphasize the new matrix analysis approach for checking the completeness of a mutually orthogonal set. This method helps us settle down a long-standing conjecture that Hadamard triples generates self-affine spectral measures. It also gives us non-trivial examples of fractal measures with Fourier frames. Furthermore, a new avenue is open to investigate whether the Middle Third Cantor measure admits Fourier frames.
\end{abstract}

\maketitle

\tableofcontents

\section{Introduction}

It is well known that the set of exponential functions $\{e^{2\pi in x}:~n\in\Z\}$ is an orthonormal basis for $L^2(I)$ where $I=[0,1]$. An interval is not the only set on which an orthogonal basis consisting of exponential functions exists for $L^2(\Omega)$. For example, for the set $\Omega=[0,1]\cup [2,3]$ the set of exponential functions $\{e^{2\pi i\lambda x}:~\lambda\in\Z \cup \Z+\frac{1}{4}\}$ is an orthogonal basis for $L^2(\Omega)$. It is an interesting question to ask what sets $\Omega$ admit an orthogonal basis consisting of exponential functions. In 1974 Fuglede, in a study \cite{Fug74} that is quite well known today, proposed the following infamous conjecture, widely known today as Fuglede's Conjecture:

\medskip

\noindent
{\bf Fuglede's Conjecture:}~~A measurable set $K$ is a spectral set in ${\mathbb R}^d$ if and only if $K$ tiles ${\mathbb R}^d$ by translation.

\medskip

Fuglede coined the term {\em spectral set} to denote sets $\Omega$ that admit admit an orthogonal basis of exponential functions. Fuglede's Conjecture has been studied by many investigators, including the authors of this paper, Jorgensen, Pedersen, Lagarias, {\L}aba, Kolountzakis, Matolcsi, Iosevich, Tao, and others (\cite{JP98,MR1700084,IKT1,IKT2,Ko1,MR2237932,La,LRW,LaWa96,LaWa97,Tao04}), but it had baffled experts for 30 years until Terence Tao \cite{Tao04} gave the first counterexample of a spectral set which is not a tile in ${\mathbb R}^d$, $d\geq5$. The example and technique were refined later to disprove the conjecture in both directions on ${\mathbb R}^d$ for $d\geq3$ \cite{MR2159781,MR2237932,MR2264214}. It has remained open in dimensions $d=1$ and $d=2$.

\medskip

Although the Fuglede's Conjecture in its original form has been disproved, there is a clear connection between spectral sets and tiling that has remained a mystery. Furthermore, spectral sets are apparently only one of the problems among an extremely broad class of problems involving complex exponential functions as either bases or more generally, frames.

\medskip

The rest of this section will focus on Fourier bases and frames for measures. Basic concepts and some of the important recent results will be introduced and reviewed. They will be the gateway to further discussions on the subject in later sections, which include open questions and more recent progresses.

\subsection{ Fourier bases and spectral measures.}

One extension of spectral sets concerns the more general spectral measures. A {\it spectral measure} is a bounded Borel measure $\mu$ such that there exists a set of complex exponentials  $E(\Lambda): =\{e_\lambda\}_{\lambda\in\Lambda}$, where $e_\lambda:=e^{2\pi i \langle \lambda,x\rangle}$, that forms an orthogonal basis of $L^2(\mu)$. If such $\Lambda$ exists, it is called a \textit{spectrum} for $\mu$. A spectral set can be viewed as a special case of spectral measure by considering the Lebesgue measure $\chi_K dx$ on $K$. Of particular intrigue are measures that are singular, especially self-similar measures that are closely tied to the field of fractal geometry and self-affine tiles.

The study of Fourier bases on singular measures started with the Jorgensen-Pedersen paper \cite{JP98} in which they proved the following surprising result: consider the construction of a Cantor set with scale 4: take the unit interval, divide it into 4 equal pieces, keep the first and the third piece. Then for the two remaining intervals, perform the same procedure and keep repeating the procedure for the remaining intervals ad infinitum. The remaining set is the Cantor-4 set in the Jorgensen-Pedersen example. On this set consider the appropriate Hausdorff measure of dimension $\frac12$, which we denote by $\mu_4$.

Jorgensen and Pedersen proved that the Hilbert space $L^2(\mu_4)$ has an orthonormal family of exponential functions, i.e., a Fourier basis, namely
$$
   \left\{e_\lambda : \lambda=\sum_{k=0}^n4^kl_k, l_k\in\{0,1\}, n\in\bn\right\}.
$$
Moreover, they also proved that the Middle-Third-Cantor set, with its corresponding Hausdorff measure cannot have more than two mutually orthogonal exponential functions.

\begin{definition}\label{defsp}
Let $\mu$ be a compactly supported Borel probability measure on
${\br}^d$ and $\langle\cdot,\cdot\rangle$ denote the standard inner product. We say that $\mu$ is a {\it spectral measure} if there
exists a countable set $\Lambda\subset {\mathbb R}^d$, which we call a {\it spectrum}, such  that
$E(\Lambda): = \{e^{2\pi i \langle\lambda,x\rangle}:
\lambda\in\Lambda\}$ is an orthonormal basis for $L^2(\mu)$. The Fourier transform of $\mu$ is defined to be
 $$
\widehat{\mu}(\xi)= \int e^{-2\pi i \langle\xi,x\rangle}d\mu(x).
 $$
 It is easy to verify that a measure is a spectral measure with spectrum $\Lambda$ if and only if the following two conditions are satisfied:
 \begin{enumerate}
\item (Orthogonality) $ \widehat{\mu}(\lambda-\lambda')=0$ for all distinct $\lambda,\lambda'\in\Lambda$ and
\item (Completeness) If for $f\in L^2(\mu)$, $\int f(x)e^{-2\pi i \langle\lambda,x\rangle}d\mu(x)=0$ for all $\lambda\in\Lambda$, then  $f=0$.
    \end{enumerate}
\end{definition}

The Jorgensen-Pedersen Cantor-4 set and the measure $\mu_4$ can be seen as the attractor and invariant measure of the iterated function system
$$\tau_0(x)=\frac{x}{4},\quad \tau_2(x)=\frac{x+2}{4}.$$

\begin{definition}\label{defifs}
For a given expansive $d\times d$ integer matrix $R$ and a finite set of integer vectors $B$ in $\bz^d$, with $\#B =: N$, we define the {\it affine iterated function system} (IFS) $\tau_b(x) = R^{-1}(x+b)$, $x\in \br^d, b\in B$. The {\it self-affine measure}, also called the {\it invariant measure}, (with equal weights) is the unique probability measure $\mu = \mu(R,B)$ satisfying
\begin{equation}\label{self-affine}
\mu(E) = \sum_{b\in B} \frac1N \mu (\tau_b^{-1} (E)),\mbox{ for all Borel subsets $E$ of $\br^d$.}
\end{equation}
This measure is supported on the {\it attractor} $T(R,B)$ which is the unique compact set that satisfies
$$
T(R,B)= \bigcup_{b\in B} \tau_b(T(R,B)).
$$
The set $T(R,B)$ is also called the {\it self-affine set} associated with the IFS. See \cite{Hut81} for details.
\end{definition}

The starting idea in Jorgensen-Pedersen's construction is to find first orthogonal exponential functions at a finite level and then iterate by rescaling and translations. If we start with the atomic measure
$\delta_{\frac14 B}:=\delta_{\frac14\{0,2\}}=\frac{1}{2}(\delta_0+\delta_2)$, we find that $\{e_{\ell} : \ell\in L:=\{0,1\}\}$ is an orthonormal basis for $L^2(\delta_{\frac14 B})$. In other words, $(R=4, B=\{0,2\}, L=\{0,1\})$ form a Hadamard triple.

\begin{definition}\label{hada}
Let $R\in M_d({\mathbb Z})$ be an $d\times d$ expansive matrix (all eigenvalues have modulus strictly greater than 1) with integer entries. Let $B, L\subset{\mathbb Z}^d $ be a finite set of integer vectors with $N= \#B=\#L$ ($\#$ means the cardinality). We assume without loss of generality that $0\in B$ and $0\in L$. We say that the system $(R,B,L)$ forms a {\it Hadamard triple} (or $(R^{-1}B, L)$ forms a {\it compatible pair} in \cite{LaWa02} ) if the matrix
\begin{equation}\label{Hadamard triples}
H=\frac{1}{\sqrt{N}}\left[e^{2\pi i \langle R^{-1}b,\ell\rangle}\right]_{\ell\in L, b\in B}
\end{equation}
is unitary, i.e., $H^*H = I$.

\end{definition}

The system $(R,B,L)$ forms a  Hadamard triple if and only if the Dirac measure $\delta_{R^{-1}B} = \frac{1}{\#B}\sum_{b\in B}\delta_{R^{-1}b}$ is a spectral measure with  spectrum $L$. Moreover, this property is a key property in producing examples of singular spectral measures, in particular spectral self-affine measures.

\medskip
Denote by
$$
B_n = B+RB+\dots+R^{n-1}B, \ \Lambda_n:=L+R^TL+\dots+(R^T)^{n-1}L.
$$
If $(R,B,L)$ form a Hadamard triple, then a simple computation (see e.g. \cite{JP98}) shows that $(R^n, B_n, \Lambda_n)$ is also a Hadamard triple, for all $n\in\bn$. In other words, the measures
$$\nu_n = \delta_{R^{-n}(B_n)}=\delta_{R^{-1}B}\ast\delta_{R^{-2}B}
\ast...\ast\delta_{R^{-n}B}$$
have a spectrum $\Lambda_n$
These measures approximate the singular measure $\mu(R,B)$ in the following sense,
\begin{equation}\label{conv prod}
\mu(R,B) = \delta_{R^{-1}B}\ast\delta_{R^{-2}B}
\ast\delta_{R^{-3}B}... = \mu_n\ast\mu_{>n}
\end{equation}
where, by self-similarity, the measure $\mu_{>n}(\cdot) = \mu(R^n\cdot) $. One should expect that, under the right conditions, the set $\Lambda = \bigcup_{n=1}^{\infty}\Lambda_n$ forms a spectrum for the measure $\mu(R,B)$ and this is the case for the Jorgensen-Pedersen example. However, this is not true in general, even though it always yields an orthonormal set, but, in some cases, this set can be incomplete. Here is a simple counterexample: consider the scale $R=2$, and the digits $B=\{0,1\}$. It is easy to see that $\mu(R,B)$ is the Lebesgue measure on $[0,1]$. One can pick $L=\{0,1\}$ to make a Hadamard triple. But when we construct the set $\Lambda$ we notice that it is actually $\Lambda=\bn\cup\{0\}$ and we know that the classical Fourier series are indexed by all the integers $\bz$, so the exponential functions with frequencies in $\Lambda$ are orthonormal, but not complete.

\medskip

The natural conjecture was raised to see {\it if for any Hadamard triple $(R,B,L)$ the measure $\mu(R,B)$ is spectral}. This conjecture  was settled on ${\mathbb R}^1$ \cite{LaWa02,DJ06}. The situation becomes more complicated on high dimension. Dutkay and Jorgensen showed that the conjecture  is true if $(R,B,L)$ satisfies a technical condition called {\it reducibility condition} \cite{DJ07d}. There are some other additional assumptions proposed by Strichartz guaranteeing the conjecture is true \cite{Str98,Str00}. Some low-dimensional special cases were also considered by Li \cite{MR3163581,MR3302160}. We eventually proved this conjecture in \cite{DL15,DHL15}.

\begin{theorem}\label{th1}
If $(R,B,L)$ is a Hadamard triple, then the self-affine measure $\mu=\mu(R,B)$ is a spectral measure.
\end{theorem}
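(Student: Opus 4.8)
\medskip

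\noindent\textbf{Proof proposal.}
The plan is to construct, by a careful ``tower'' procedure, a set $\Lambda\subset\br^d$ for which $E(\Lambda)$ is an orthonormal basis of $L^2(\mu)$; by Definition~\ref{defsp} this makes $\mu$ spectral. Orthogonality is the easy half: since $(R^n,B_n,\Lambda_n)$ is a Hadamard triple for every $n$, $E(\Lambda_n)$ is an orthonormal basis of $L^2(\nu_n)$, and from $\widehat{\mu}=\widehat{\nu_n}\cdot\bigl(\widehat{\mu}\circ(R^T)^{-n}\bigr)$ (this is \eqref{conv prod} together with $\mu_{>n}=\mu(R^n\cdot)$) we get $\widehat{\mu}(\lambda-\lambda')=\widehat{\nu_n}(\lambda-\lambda')\,\widehat{\mu}\bigl((R^T)^{-n}(\lambda-\lambda')\bigr)=0$ for distinct $\lambda,\lambda'\in\Lambda_n$. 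The key structural point is that the tower is \emph{not} rigid: if $(R,B,L)$ is a Hadamard triple, so is $(R,B,L')$ whenever $L'$ is obtained by translating the nonzero elements of $L$ individually by vectors in $R^T\Z^d$, because $e^{2\pi i\langle R^{-1}b,\ell+R^T\gamma\rangle}=e^{2\pi i\langle R^{-1}b,\ell\rangle}$ for $b,\gamma\in\Z^d$ (see Definition~\ref{hada}). Hence for \emph{any} choice of such digit sets $L_0,L_1,L_2,\dots$ with $0\in L_k$, the increasing sets $\Lambda^{(n)}:=L_0+R^T L_1+\cdots+(R^T)^{n-1}L_{n-1}$ are spectra of $\nu_n$, so $E(\Lambda)$ with $\Lambda:=\bigcup_n\Lambda^{(n)}$ is an orthonormal family in $L^2(\mu)$, and the whole problem is to choose the $L_k$ so that $E(\Lambda)$ is also complete. (The naive choice $L_k\equiv L$ can fail, as the example $R=2$, $B=L=\{0,1\}$ shows.)

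For completeness I would invoke the Jorgensen--Pedersen criterion: an orthonormal family $E(\Lambda)$ is a basis of $L^2(\mu)$ if and only if $Q_\Lambda(\xi):=\sum_{\lambda\in\Lambda}|\widehat{\mu}(\xi+\lambda)|^2\equiv 1$ on $\br^d$. Since $\Lambda^{(n)}$ is a spectrum of $\nu_n$ we have the Parseval identity $\sum_{\lambda\in\Lambda^{(n)}}|\widehat{\nu_n}(\xi+\lambda)|^2=1$, whence
$$1-Q_{\Lambda^{(n)}}(\xi)\;=\;\sum_{\lambda\in\Lambda^{(n)}}|\widehat{\nu_n}(\xi+\lambda)|^2\Bigl(1-\bigl|\widehat{\mu}\bigl((R^T)^{-n}(\xi+\lambda)\bigr)\bigr|^2\Bigr)\;\ge\;0,$$
so $Q_{\Lambda^{(n)}}\le 1$, these quantities increase to $Q_\Lambda$, and the task reduces to choosing the $L_k$ so that the right-hand side tends to $0$ --- i.e.\ so that the Fourier mass $|\widehat{\nu_n}(\xi+\lambda)|^2$ concentrates on those $\lambda$ for which $(R^T)^{-n}(\xi+\lambda)$ approaches the (essentially trivial) set where $|\widehat{\mu}|=1$. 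Using $\widehat{\mu}(\eta)=m_B((R^T)^{-1}\eta)\,\widehat{\mu}((R^T)^{-1}\eta)$ with $m_B(\eta)=\tfrac1N\sum_{b\in B}e^{-2\pi i\langle\eta,b\rangle}$, this is naturally a statement about iterates of the transfer (Ruelle) operator $f\mapsto\sum_{\ell\in L}|m_B((R^T)^{-1}(\cdot+\ell))|^2\,f((R^T)^{-1}(\cdot+\ell))$, and --- after the reduction below, which confines the relevant frequencies to a finite set of residues --- about infinite products of an associated sequence of finite nonnegative matrices, one per level, whose entries are values of $|m_B|^2$ on lattice points: completeness holds precisely when, for a suitable choice of the sequence, these products do not lose rank / keep the relevant direction uniformly bounded below. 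This is the ``matrix analysis'' heart of the argument.

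The step I expect to be the main obstacle, and where the genuinely new input lies, has three ingredients. First, a preliminary reduction, standard in this circle of ideas: conjugating $R$ by an element of $\mathrm{GL}_d(\Z)$ and, if necessary, passing to a suitable $R$-invariant sublattice, one reduces to a non-degenerate situation in which the zero set of $\widehat{\mu}$ contains no full coset of a sublattice --- equivalently $\mu$ is not a push-forward of a measure living in a proper affine sublattice --- which makes the matrices above finite and rules out the obstruction present in the $R=2$, $B=\{0,1\}$ example. Second, the notion of an \emph{equi-positive} family of probability measures: a family $\{\mu_a\}_a$ is equi-positive if there is $\epsilon>0$ with $\sup_{k\in\Z^d}|\widehat{\mu_a}(\xi+k)|\ge\epsilon$ for all $a$ and all $\xi$ in a fundamental domain; one proves the general principle that if the family of ``descendant'' measures of $\mu$ (those arising from the tails of the infinite convolution \eqref{conv prod}) is equi-positive, then the $L_k$ can be chosen inductively --- at each level translating the nonzero elements of $L$ by vectors of $R^T\Z^d$ so as to ``follow'' a coset on which $|\widehat{\mu}|\ge\epsilon$ --- to force $\inf_\xi Q_\Lambda(\xi)>0$, and then, using orthonormality together with the cascade structure of the $Q_{\Lambda^{(n)}}$, to upgrade this lower bound to $Q_\Lambda\equiv 1$. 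Third, one must verify that the descendant family of $\mu(R,B)$ really is equi-positive: self-affinity produces only finitely many (or a compact family of) descendant measures, and a compactness argument on the fundamental domain combined with the Hadamard property of $(R,B,L)$ --- which prevents $|\widehat{\mu}|$ from vanishing on an entire coset --- yields the uniform constant $\epsilon$. Granting these three points, $E(\Lambda)$ is a complete orthonormal set, so $\Lambda$ is a spectrum of $\mu$ and $\mu$ is a spectral measure. I expect the equi-positivity verification and the passage from $\inf Q_\Lambda>0$ to the exact identity $Q_\Lambda\equiv 1$ to be the technically hardest parts.
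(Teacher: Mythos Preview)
Your overall architecture for the case where the ``equi-positivity'' holds is essentially the same as the paper's: your flexible tower (translating digits of $L$ by $R^T\bz^d$ at each level), your compactness argument producing a uniform $\epsilon$, and your use of the lower frame bound to force completeness all match what the paper does in Propositions~\ref{prop_main}--\ref{prop_main2} and Lemma~\ref{lem2.1}. The paper phrases completeness via the frame inequality on step functions rather than via $Q_\Lambda\equiv 1$, but that is a cosmetic difference.

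The genuine gap is in your first ``ingredient.'' You assert that after conjugating by $\mathrm{GL}_d(\bz)$ and passing to the minimal invariant lattice one reduces to a situation in which $\widehat\mu$ never vanishes on a full coset of $\bz^d$, i.e.\ $\mathcal Z=\emptyset$ in the paper's notation \eqref{eqz}. This is false in dimension $d\ge 2$: Example~\ref{example5.3} exhibits a Hadamard triple with $\bz[R,B]=\bz^2$ (so no further lattice reduction is possible) for which the coset $(0,\tfrac13)^T+\bz^2$ lies entirely inside the zero set of $\widehat\mu$. In that example no subset of $\bz^2$ can be a spectrum, so your inductive choice of $L_k$ --- which only produces candidates in $\bz^d$ --- cannot succeed regardless of how cleverly the translations are chosen. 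The Hadamard property by itself does \emph{not} prevent $|\widehat\mu|$ from vanishing on an entire coset.

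What the paper does for $\mathcal Z\neq\emptyset$ is not a reduction that removes the obstruction but a structural theorem exploiting it: $\mathcal Z$ is invariant under a natural transfer dynamics, and results of Conze--Herv\'e--Raugi force $(R,B)$ (after conjugation) into a block lower-triangular ``quasi-product'' form (Proposition~\ref{pr1.14}, Theorem~\ref{th_quasi}) in which the second block has a full digit set. The measure then fibers as in Lemmas~\ref{lem1.23}--\ref{lem1.24}, the fiber measures share a common lattice spectrum $\Gamma_2$ (Proposition~\ref{lem1.25}), and one finishes by induction on the dimension using the $r$-dimensional first factor. This entire mechanism --- the dichotomy on $\mathcal Z$, the quasi-product structure theorem, and the dimensional induction --- is the hard half of the proof and is missing from your proposal.
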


The proof of this result highlights some of the useful techniques for the study of spectral self-affine measures. We will give a sketch of proof of this result as part of this review.

\bigskip

\subsection{  Fourier frames} A natural generalization of orthonormal basis is the notion of frames. It allows expansion of functions in a non-unique way, but is robust to perturbation of frequencies \cite{DHSW11}. Recall that a {\it frame} is a family of vectors $\{e_i: i \in I\}$ in a Hilbert space $H$ with the property that there exist constants $A,B>0$ (called {\it the frame bounds}) such that
\begin{equation}
A\|f\|^2\leq \sum_{i\in I}|\ip{f}{e_i}|^2\leq B\|f\|^2,\quad(f\in H).
\label{eqfr}
\end{equation}
A Borel measure $\mu$ is called a {\it frame-spectral} measure if there exists a family of exponential functions $\{e_\lambda : \lambda\in\Lambda\}$ forming a frame for $L^2(\mu)$.
\medskip

Soon after Jorgensen and Pedersen showed in \cite{JP98} that the Middle-Third-Cantor measure $\mu_3$ is not a spectral measure, a natural question was proposed by Strichartz \cite{Str00} who asked   whether the measure $\mu_3$  is frame-spectral. The question is still open.

\medskip

We proved in \cite{DL14} that if a measure has a Fourier frame, then it must have a certain homogeneity under local translations (so it must look the same locally at every point). This condition excludes the possibility of Fourier frames on self-affine measures with unequal weights, but not for the Middle-Third-Cantor measures. Some fairy large Bessel sequences of exponential functions (i.e., only the upper bound holds in \eqref{eqfr}) were constructed in \cite{MR2826404} for the Middle-Third-Cantor set. Some weighted Fourier frames were built by Picioroaga and Weber for the Cantor-4 set in \cite{PiWe15}.

\medskip

The following condition generalizing Hadamard triples was introduced in \cite{DL15} and \cite{LaiW}. We state the definition on ${\mathbb R}^1$, but it can be defined on any dimension.

\medskip

\begin{definition}\label{APFT}
 Let $\epsilon_j$ be such that $0\le\epsilon_j<1$ and $\sum_{j=1}^{\infty}\epsilon_j<\infty$. We say that $\{(N_j,B_j)\}$ is an {\it almost-Parseval-frame tower} associated to $\{\epsilon_j\}$ if
\begin{enumerate}
\item $N_j$ are integers and $N_j\geq2$ for all $j$;
\item  $B_j\subset \{0,1,...,N_j-1\}$ and $ 0 \in B_j$ for all $j$;
\item  Let $M_j := \#B_j$. There exists $L_j\subset {\mathbb Z}$ (with $0\in L_j$) such that for all $j$,
 \begin{equation}\label{ALmost}
(1-\epsilon_j)^2\sum_{b\in B_j}|w_b|^2\leq \sum_{\lambda\in L_j}\left|\frac{1}{\sqrt{M_j}}\sum_{b\in B_j}w_be^{-2\pi i b \lambda/N_j}\right|^2\leq (1+\epsilon_j)^2\sum_{b\in B_j}|w_b|^2
\end{equation}
 for all ${\bf w} = (w_b)_{b\in{B_j}}\in{\mathbb C}^{M_j}$.
 Letting the matrix ${\mathcal F}_j = \frac{1}{\sqrt{M_j}}\left[e^{2\pi i b \lambda/N_j}\right]_{\lambda\in L_j,b\in B_j}$ and $\|\cdot\|$ the standard Euclidean norm, (\ref{ALmost}) is equivalent to
\begin{equation}\label{ALmost1}
(1-\epsilon_j)\|{\bf w}\|\leq \|{\mathcal F}_j{\bf w}\|\leq (1+\epsilon_j)\|{\bf w}\|
\end{equation} for all  ${\bf w}\in {\mathbb C}^{M_j}$.
 \end{enumerate}
  Whenever  $\{L_j\}_{j\in{\mathbb Z}}$ exists, we call $\{L_j\}_{j\in{\mathbb Z}}$ a {\it pre-spectrum} for the almost-Parseval-frame tower.
We define the following measures associated to an almost-Parseval-frame tower.
$$
\nu_j = \frac{1}{M_{j}}\sum_{b\in B_{j}}\delta_{b/N_1N_2...N_j}
$$
and
\begin{equation}\label{measure}
\mu = \nu_1\ast\nu_2\ast.... : = \mu_n\ast\mu_{>n}
\end{equation}
where $\mu_n$ is the convolution of the first $n$ discrete measures and $\mu_{>n}$ is the remaining part.

\end{definition}

\medskip

When all $\epsilon_j=0$, $(N_j,B_j,L_j)$ forms a Hadamard triple.   We note that this class of measures includes self-similar measures because if we are given an integer $N\ge2$ and a set $B\subset\{0,1,...,N-1\}$ such that
$$
N_j = N^{n_j}, B_j= B+NB+...+N^{n_j-1}B,
$$
 then the associated measure $\mu$ is the self-affine measure $\mu(N,B)$. In particular if $N=3$ and $B = \{0,2\}$, $\mu$ is the standard Middle Third Cantor measure. In such situation, the almost-Parseval-frame tower is called {\it self-similar}. We prove that

 \begin{theorem} \label{th2}

 (i) If the self-similar almost-Parseval-frame tower as in Definition \ref{APFT} exists, then the associated self-similar measure is frame-spectral.

 \medskip

 (ii)  There exists almost-Parseval-frame tower with $\epsilon_j>0$ and the associated fractal measure is frame-spectral but not spectral.
 \end{theorem}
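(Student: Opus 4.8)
\emph{Part (i).} Put $P_0=1$, $P_j=N_1\cdots N_j$, and form the telescoped frequency sets $\Lambda_n:=L_1+N_1L_2+\cdots+P_{n-1}L_n\subset\mathbb Z$, so that the partial convolution $\mu_n=\nu_1\ast\cdots\ast\nu_n$ is a discrete measure with atoms in $P_n^{-1}\mathbb Z$, while the tail $\mu_{>n}$ in \eqref{measure} is supported in $[0,1/P_n]$. The plan is the usual two steps: a frame estimate at every finite level, then a limiting argument. For the finite level I would induct on $n$. Since the atoms of $\mu_{n-1}$ are $1/P_{n-1}$-separated and those of $\nu_n$ lie in $[0,1/P_{n-1})$, the support of $\mu_n$ splits as a disjoint union over the atoms of $\mu_{n-1}$, and writing $\lambda=\lambda'+P_{n-1}\ell_n$ with $\lambda'\in\Lambda_{n-1}$, $\ell_n\in L_n$ one gets, for an atom $x$ of $\mu_{n-1}$ and $b\in B_n$,
\[
e_\lambda\!\left(x+\tfrac{b}{P_n}\right)=e_{\lambda'}(x)\,c(\lambda',b)\,e^{2\pi i\ell_n b/N_n},\qquad c(\lambda',b):=e^{2\pi i\lambda' b/P_n},\ |c(\lambda',b)|=1 .
\]
The unimodular factor $c(\lambda',\cdot)$ is a diagonal unitary on $\mathbb C^{B_n}$, hence irrelevant for Euclidean norms; feeding this into \eqref{ALmost} fibrewise and summing over $\ell_n$ and then over $\lambda'$ gives
\[
\Big(\prod_{j=1}^n(1-\epsilon_j)^2\Big)\|f\|^2\ \le\ \sum_{\lambda\in\Lambda_n}\big|\langle f,e_\lambda\rangle_{L^2(\mu_n)}\big|^2\ \le\ \Big(\prod_{j=1}^n(1+\epsilon_j)^2\Big)\|f\|^2\qquad(f\in L^2(\mu_n)).
\]
Since $\sum_j\epsilon_j<\infty$, both products stay between fixed constants $0<A_\infty\le B_\infty<\infty$, so the level-$n$ frame bounds are uniform in $n$.

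The limiting step is the real point, and this is where I expect the difficulty. One cannot just take $\Lambda=\bigcup_n\Lambda_n$: the Introduction's example (scale $2$, digits $\{0,1\}$, $L_j=\{0,1\}$) shows that for some towers this set collapses to $\mathbb N\cup\{0\}$, which is neither complete nor a frame in $L^2$ of the limiting (Lebesgue) measure. Concretely, for $f$ a level-$m$ function and $n\ge m$ one has $\langle f,e_\lambda\rangle_{L^2(\mu)}=\widehat{\mu_{>n}}(\lambda)\,\langle g,e_\lambda\rangle_{L^2(\mu_n)}$ for $\lambda\in\Lambda_n$, with $g\in L^2(\mu_n)$ the obvious lift of $f$ and $\|g\|_{L^2(\mu_n)}=\|f\|_{L^2(\mu)}$; the weight $|\widehat{\mu_{>n}}(\lambda)|\le1$ is harmless for a Bessel bound (which, together with density of the level-$m$ functions and lower semicontinuity of $f\mapsto\sum_\lambda|\langle f,e_\lambda\rangle|^2$, gives the upper frame bound $B_\infty$), but it can degenerate to size $\asymp P_n^{-1}$ for $\lambda$ near the top of $\Lambda_n$, which blocks a naive lower bound. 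The remedy is to enlarge $\bigcup_n\Lambda_n$ to a judiciously chosen set $\Gamma$ --- large enough for completeness, small enough to keep the Bessel bound --- and to run the matrix-analysis completeness argument underlying Theorem~\ref{th1} with the exact orthogonality relations replaced throughout by the approximate isometries \eqref{ALmost1}; the errors then accumulate multiplicatively and remain controlled by $\prod_j(1\pm\epsilon_j)^2$. Once $\Gamma$ is Bessel, it suffices to prove the lower frame bound on the dense subspace of level-$m$ functions, whose Fourier mass concentrates on the frequencies where $\widehat{\mu_{>n}}$ is essentially $1$, after which the finite-level induction propagates to a uniform lower bound. This completeness/lower-bound step is, to my mind, the only genuine obstacle; the finite-level induction is routine.

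\emph{Part (ii).} By the general form of part (i) --- proved by the same method for arbitrary, not necessarily self-similar, towers --- every almost-Parseval-frame tower produces a frame-spectral $\mu$, so it is enough to exhibit a tower with some $\epsilon_j>0$ whose $\mu$ is a genuine singular fractal measure that fails to be spectral. I would take an exact Hadamard tower over a bona-fide fractal (e.g.\ $N_j=4$, $B_j=\{0,1\}$, $L_j=\{0,2\}$ for $j\ge2$, which builds the dimension-$\tfrac12$ Cantor-$4$ structure below scale $4$) and perturb only the first level by a \emph{defective} digit set $N_1=4$, $B_1=\{0,1,3\}\subset\{0,1,2,3\}$. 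Since $\#B_1=3\nmid4$, no $3\times3$ matrix $\tfrac1{\sqrt3}\big[e^{2\pi i b\ell/4}\big]_{\ell,b}$ can be unitary (a $\pm1$ pattern of three unimodular numbers cannot sum to $0$), so $B_1$ has no Hadamard partner and any admissible $\epsilon_1$ is strictly positive; but a concrete choice such as $L_1=\{0,1,2\}$ makes the matrix injective with largest singular value $\le\sqrt3<2$, so \eqref{ALmost1} holds with some $\epsilon_1\in(0,1)$, and --- only one level being perturbed --- $\sum_j\epsilon_j=\epsilon_1<\infty$. The associated $\mu=\nu_1\ast\mu_{>1}$, with $\mu_{>1}$ a scaled Cantor-$4$ measure, is three disjointly translated copies of $\mu_{>1}$, hence singular. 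It remains to show $\mu$ is not spectral, which I would do by a rank obstruction at scale $4$: unfolding $L^2(\mu)$ as $(\mathbb C^3,\tfrac13\langle\cdot,\cdot\rangle)\otimes L^2(\mu_{>1})$, a hypothetical orthonormal exponential basis $\{e_\lambda:\lambda\in\Lambda\}$ becomes $\{v^{(\lambda\bmod4)}\otimes\psi_\lambda\}$ with $v^{(r)}=(1,i^{r},i^{-r})$ and $\psi_\lambda$ the restriction of $e_\lambda$ to one cell; since $\langle v^{(r)},v^{(r')}\rangle=1+2\cos\!\big(\tfrac{\pi(r-r')}{2}\big)\in\{\pm1\}$ is never $0$, orthogonality forces $\widehat{\mu_{>1}}(\lambda-\lambda')=0$ across residue classes as well, making $\{\psi_\lambda:\lambda\in\Lambda\}$ an orthonormal system in $L^2(\mu_{>1})$, so $\{v^{(\lambda\bmod4)}\otimes\psi_\lambda\}$ puts only one vector in each slice $\mathbb C^3\otimes\mathbb C\psi_\lambda$ and cannot be complete --- a contradiction (equivalently, spectrality of $\mu$ would force $\nu_1=\tfrac13(\delta_0+\delta_{1/4}+\delta_{3/4})$ to be spectral, which it is not, since $\widehat{\nu_1}(t)=\tfrac13(1+i^{t}+i^{-t})$ never vanishes for $t\in\mathbb Z$, so no two of the exponentials are even $\nu_1$-orthogonal). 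The two things to nail down are the admissibility of $\epsilon_1$, which is the short computation above, and the non-spectrality obstruction, which is the substantive half.
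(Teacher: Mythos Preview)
Your finite-level induction is fine and matches the paper's Proposition~\ref{prop_main} analogue. But you explicitly flag the limiting step as ``the only genuine obstacle'' and then do not resolve it: ``enlarge $\bigcup_n\Lambda_n$ to a judiciously chosen set $\Gamma$'' is a description of what must be done, not a construction. The paper's resolution is concrete and you are missing both of its ingredients. First, one shows that the periodic zero set $\mathcal Z=\{\xi:\widehat\mu(\xi+k)=0\text{ for all }k\in\mathbb Z\}$ is empty; for a self-similar tower with $B\subset\{0,\dots,N-1\}$ the support $T(N,B)$ sits in $[0,1]$, so Stone--Weierstrass gives density of integer-frequency exponentials in $C(T(N,B))$, forcing $\mathcal Z=\emptyset$. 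Second, $\mathcal Z=\emptyset$ plus self-similarity (so $\mu_{>n}(\cdot)=\mu(N^n\cdot)$) lets one run the compactness argument of Lemma~\ref{lem2.1}/Proposition~\ref{prop_main2}: shift each block $J_{n_k}$ by integers $k(j)$ chosen so that $|\widehat\mu((R^T)^{-m_k}\lambda)|^2\ge\delta_0$ uniformly, which is exactly the control on $|\widehat{\mu_{>n}}(\lambda)|$ you say ``can degenerate''. Without $\mathcal Z=\emptyset$ (and the self-similarity that makes the tail measure a rescaling of $\mu$) you have no mechanism for producing $\Gamma$, and your own $R=2$, $B=\{0,1\}$ example shows that something specific to the hypotheses must be invoked.

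\textbf{Part (ii).} Your construction is entirely different from the paper's (which takes $N_j=M_jK_j+\alpha_j$, $B_j=\{0,K_j,\dots,(M_j-1)K_j\}$ and estimates $\|\mathcal F_j-\mathcal H_j\|$ against a genuine Hadamard matrix $\mathcal H_j$; see Theorem~\ref{th0.1}), and it has a structural problem: you invoke ``the general form of part (i)\,---\,proved by the same method for arbitrary, not necessarily self-similar, towers'' to get frame-spectrality, but part~(i) as stated and as proved in the paper is only for \emph{self-similar} towers, precisely because the $\delta(\Lambda)>0$ construction needs $\mu_{>n}$ to be a rescaled copy of $\mu$. Your tower (first level $\{0,1,3\}$, later levels $\{0,1\}$) is not self-similar, so you owe a separate argument for frame-spectrality. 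Your non-spectrality argument is essentially right in spirit but sloppy in one place: you assert $\widehat{\nu_1}(t)$ never vanishes ``for $t\in\mathbb Z$'', whereas a putative spectrum $\Lambda\subset\mathbb R$ gives differences $\lambda-\lambda'\in\mathbb R$; what you actually need (and what is true, since $1+z+z^3=0$ has no root on the unit circle) is that $\widehat{\nu_1}$ has no \emph{real} zero.
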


In the rest of the paper we go into more details on spectral self-affine measures as well as frame-spectral measures. We consider the explicit construction of self-affine frame-spectral measures that are not spectral. It is our hope that this survey summarizes not only recent results on the subject, but also some of the key techniques used to tackle problems. The open questions we discuss here should serve to quickly lead people into this area.

\medskip

\section{Spectral Self-Affine Measures}

 We begin with the following definitions.

\begin{definition}
We call a finite set $B\subset\bz^d$, a {\it simple digit set for $R$}, if distinct elements of $B$ are not congruent $(\mod R(\bz^d))$. We define $\bz[R,B]$ to be the smallest lattice in $\br^d$ that contains the set $B$ and is invariant under $R$, i.e., $R(\bz[R,B])\subset \bz[R,B]$.

\medskip

We say that $\mu(R,B)$ satisfies the {\it no-overlap} condition if
$$
\mu(\tau_b(T(R,B))\cap \tau_{b'}(T(R,B))) = 0, \ \forall b\neq b'\in B.
$$
\end{definition}

We have the following preliminary reductions that we can do to solve the problem.

\begin{enumerate}
\item If $(R,B,L)$ is a Hadamard triple, then $B$ is a simple digit set for $R$ and $L$ is a simple digit set for $R^T$.
\item We can assume without loss of generality that $\bz[R,B] = \bz^d$.
\item If $B$ is a simple digit for $R$, then $\mu(R,B)$ satisfies the no-overlap condition.
\end{enumerate}

Property (i) is a simple consequence of mutually orthogonality. If $\bz[R,B] \ne \bz^d$, we can conjugate some matrix to produce another Hadamard triple which satisfies with the desired property \cite[Proposition 4.1]{DL15}. The proof of the no-overlap condition of the self-affine measure can be referred to \cite[Section 2]{DL15}.

\medskip

The following proposition perhaps gives us the main idea on how to prove the completeness of an orthogonal set of exponentials and the proof are readily generalized to give our various results.

\begin{proposition}\label{prop_main}
Suppose that $(R,B,L)$ is a Hadadmard triple and $
\Lambda = \bigcup_{n=1}^{\infty}\Lambda_n $ where $\Lambda_n = L+R^TL+...+(R^T)^{n-1}L.$ Assume that
$$
\delta(\Lambda): = \inf_{n\ge1}\inf_{\lambda\in\Lambda_n}|\widehat{\mu}((R^T)^{-n}\lambda)|^2>0
$$
Then $\Lambda$ is a spectrum for $L^2(\mu(R,B))$.

\end{proposition}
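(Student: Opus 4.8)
The plan is to exploit the convolution structure \eqref{conv prod} together with the fact that each $\Lambda_n$ is already a spectrum for the discrete measure $\nu_n=\delta_{R^{-n}(B_n)}$, and then pass to the limit using the uniform lower bound $\delta(\Lambda)>0$. The orthogonality of $E(\Lambda)$ is immediate from the Hadamard triple hypothesis (this is the standard computation alluded to in the excerpt), so the entire content is completeness. I would set this up via the standard function
\[
Q(\xi) = \sum_{\lambda\in\Lambda} |\widehat{f\,d\mu}(\xi+\lambda)|^2,
\]
for $f\in L^2(\mu)$; completeness of $E(\Lambda)$ is equivalent to $Q(\xi)=\|f\|_{L^2(\mu)}^2$ for a.e.\ $\xi$ (Parseval-type/Jorgensen--Pedersen criterion), while orthonormality already gives $Q(\xi)\le \|f\|^2$ everywhere. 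So the goal reduces to showing $Q(0)\ge \|f\|_{L^2(\mu)}^2$, or more robustly that $Q$ cannot be strictly less than $\|f\|^2$ on a positive-measure set.

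First I would record the finite-level identity: since $(R^n,B_n,\Lambda_n)$ is a Hadamard triple, $E(\Lambda_n)$ is an orthonormal basis for $L^2(\nu_n)$, hence for every $g\in L^2(\nu_n)$ one has $\sum_{\lambda\in\Lambda_n}|\widehat{g\,d\nu_n}(\lambda)|^2 = \|g\|_{L^2(\nu_n)}^2$. Next, using the factorization $\mu = \nu_n \ast \mu_{>n}$ with $\mu_{>n} = \mu\circ R^n$ (from \eqref{conv prod}), I would express $\widehat{f\,d\mu}$ at the points $\lambda\in\Lambda_n\subset\Lambda$ in terms of $\widehat{\nu_n}$ and a conditional-expectation-type average of $f$ against $\mu_{>n}$. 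Concretely, write $\widehat{f d\mu}(\lambda) = \widehat{\nu_n}(\lambda)\cdot\widehat{(E_n f)\, d\nu_n}(\lambda)$-style factor is wrong in general; instead the correct route is: because $\widehat{\mu}((R^T)^{-n}\lambda) = \widehat{\nu_n}((R^T)^{-n}\lambda)\,\widehat{\mu_{>n}}((R^T)^{-n}\lambda)$ and $\lambda\in\Lambda_n$ means $\widehat{\nu_n}$ contributes a unimodular-up-to-$1/\sqrt{N^n}$ factor, one gets $|\widehat{f d\mu}(\lambda)|^2 \ge \delta(\Lambda)\,|\widehat{h_n\,d\mu_n}(\lambda)|^2$ where $h_n$ is the average of $f$ over the level-$n$ cylinders. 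Summing over $\lambda\in\Lambda_n$ and applying the finite-level Parseval identity then yields
\[
\sum_{\lambda\in\Lambda}|\widehat{fd\mu}(\lambda)|^2 \ \ge\ \sum_{\lambda\in\Lambda_n}|\widehat{fd\mu}(\lambda)|^2 \ \ge\ \delta(\Lambda)\,\|h_n\|_{L^2(\mu_n)}^2 = \delta(\Lambda)\,\|\mathbb{E}_n f\|_{L^2(\mu)}^2,
\]
where $\mathbb{E}_n$ is conditional expectation onto the $\sigma$-algebra generated by level-$n$ cylinders.

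Finally I would let $n\to\infty$: by the martingale convergence theorem $\|\mathbb{E}_n f\|_{L^2(\mu)}\to\|f\|_{L^2(\mu)}$ (using the no-overlap condition so that the cylinder $\sigma$-algebras generate the full Borel $\sigma$-algebra of $\mu$), giving $\sum_{\lambda\in\Lambda}|\widehat{fd\mu}(\lambda)|^2\ge\delta(\Lambda)\|f\|^2$. This shows $E(\Lambda)$ is a frame, not yet a basis; to upgrade to completeness one argues that if $\widehat{fd\mu}(\lambda)=0$ for all $\lambda\in\Lambda$ then the displayed inequality forces $\|f\|=0$. \emph{The main obstacle} I anticipate is making the factorization step rigorous — i.e., correctly identifying the quantity that plays the role of $\widehat{h_n\,d\mu_n}(\lambda)$ so that the $\widehat{\mu}((R^T)^{-n}\lambda)$ factor appears cleanly and can be bounded below by $\delta(\Lambda)$ uniformly; this requires carefully tracking how $\widehat{f d\mu}(\lambda)$ decomposes under $\mu=\mu_n\ast\mu_{>n}$ and relating the $\mu_{>n}$-part back to a conditional expectation, which is where the $|\widehat{\mu}((R^T)^{-n}\lambda)|^2$ in the hypothesis is exactly what is needed.
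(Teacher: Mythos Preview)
Your overall strategy coincides with the paper's: exploit the Hadamard structure at each finite level to get a Parseval identity for $\nu_n$, combine it with the factorization coming from $\mu=\nu_n\ast\mu_{>n}$, bound the $\mu_{>n}$-contribution below by $\delta(\Lambda)$, and pass to the limit. The paper carries this out by testing against the dense class of level-$n$ step functions $f=\sum_{\mathbf b\in B_n} w_{\mathbf b}\mathbf 1_{\tau_{\mathbf b}(T)}$; for such $f$ one has the exact identity
\[
\int f\,e^{-2\pi i\langle\lambda,x\rangle}\,d\mu=\frac{1}{N^n}\,\widehat\mu\bigl((R^T)^{-n}\lambda\bigr)\sum_{\mathbf b\in B_n} w_{\mathbf b}\,e^{-2\pi i\langle R^{-n}\mathbf b,\lambda\rangle},
\]
which, after summing over $\lambda\in\Lambda_n$ and using $\|H_n\mathbf w\|=\|\mathbf w\|$, yields the two-sided frame inequality on step functions; density then finishes. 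Your conditional-expectation/martingale language is just another name for this density argument.

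The one genuine gap is your factorization step for \emph{general} $f$. The pointwise inequality you write, $|\widehat{f\,d\mu}(\lambda)|^2\ge \delta(\Lambda)\,|\widehat{h_n\,d\mu_n}(\lambda)|^2$ with $h_n=\mathbb E_nf$, is not correct: for arbitrary $f$ the self-affine decomposition gives
\[
\widehat{f\,d\mu}(\lambda)=\frac{1}{N^n}\sum_{\mathbf b\in B_n} e^{-2\pi i\langle R^{-n}\mathbf b,\lambda\rangle}\int f\bigl(R^{-n}(y+\mathbf b)\bigr)e^{-2\pi i\langle (R^T)^{-n}\lambda,y\rangle}\,d\mu(y),
\]
and the inner integrals depend on both $\mathbf b$ and $\lambda$, so no clean factor $\widehat\mu((R^T)^{-n}\lambda)$ splits off. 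In particular $\widehat{f\,d\mu}(\lambda_0)$ may well vanish at a single $\lambda_0\in\Lambda_n$ while $\widehat{(\mathbb E_nf)\,d\nu_n}(\lambda_0)\neq0$. The identity you need holds \emph{only} when $f$ is already a level-$n$ step function (equivalently $f=\mathbb E_nf$), which is precisely why the paper restricts to that class first. Once you make that restriction, your argument and the paper's are the same proof.
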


\begin{proof}
The proof of mutual orthogonality follows directly from the fact that
$$
H_n = \frac{1}{\sqrt{N^n}}\left(e^{-2\pi i \ip{{ R}^{-n}\bf b}{\lambda}}\right)_{\lambda\in\Lambda_n,{\bf b}\in {B}_n}.
$$
is a unitary matrix. We now show the completeness by showing that the following frame bounds hold: for any $f\in L^2(\mu)$,
\begin{equation}\label{eq2.1i}
\delta(\Lambda)\|f\|^2\leq \sum_{\lambda\in\Lambda}\left|\int f(x)e^{-2\pi i \langle\lambda,x\rangle}d\mu(x)\right|^2\leq \|f\|^2.
\end{equation}
The positive lower bound implies the completeness. To prove (\ref{eq2.1i}), we just need to check it for a dense set of functions in $L^2(\mu)$. Let ${\bf 1}_E$ be the indicator function of the set $E$ and
$$
{\mathcal S}_n = \left\{\sum_{{\bf b}\in {B}_n}w_{\bf b}{\bf 1}_{\tau_{\bf b}(R,B)}: w_{\bf b}\in{\mathbb C}\right\}.
$$
Thus, ${\mathcal S}_n$ denotes the collection of all $n^{th}$ level step functions on $T(R,B)$. It forms an increasing union of sets. Let also ${\mathcal S} = \bigcup_{n=1}^{\infty}{\mathcal S}_n$. This ${\mathcal S}$ forms a dense set in  $L^2(\mu)$. Now for any $f = \sum_{{\bf b}\in {B}_n}w_{\bf b}{\bf 1}_{\tau_{\bf b}(R,B)}\in {\mathcal S}$, a direct computation shows that
\begin{equation}\label{eq3.1}
\int|f|^2d\mu = \frac{1}{N^n}\sum_{{\bf b}\in {B}_n}|w_{\bf b}|^2 =  \frac{1}{N^n}\|{\bf w}\|^2
\end{equation}
where ${\bf w} = (w_{\bf b})_{{\bf b}\in B_n}$ and
\begin{equation}\label{eq3.2}
\int f(x)e^{-2\pi i \ip{\lambda}{x}}d\mu(x) = \frac{1}{N^n}\widehat{\mu}((R^T)^{-n}\lambda)\sum_{{\bf b}\in {B}_n}w_{\bf b} e^{-2\pi i  \ip{R^{-n}{\bf b}}{ \lambda}}
\end{equation}
which means that
\begin{equation}\label{eq3.3}
\sum_{\lambda\in\Lambda_n}\left|\int f(x)e^{-2\pi i \ip{\lambda}{x}}d\mu(x)\right|^2 = \frac{1}{N^n} \sum_{\lambda\in\Lambda_n}|\widehat{\mu}((R^T)^{-n}\lambda)|^2\left|\sum_{{\bf b}\in {B}_n}\frac{1}{\sqrt{N^n}}w_{\bf b}e^{-2\pi i  \ip{R^{-n}{\bf b}}{\lambda}}\right|^2
\end{equation}
As $\delta(\Lambda)\le|\widehat{\mu}((R^T)^{-n}\lambda)|^2\le 1$, we obtain
$$
 \frac{1}{N^n}\delta(\Lambda)\|H_n{\bf w}\|^2\le \sum_{\lambda\in\Lambda_n}\left|\int f(x)e^{-2\pi i \ip{\lambda}{x}}d\mu(x)\right|^2 \le  \frac{1}{N^n}\|H_n{\bf w}\|^2 .
$$ But $H_n$ is a Hadamard matrix, so we have $\|H_n{\bf w}\| = \|{\bf w}\|$ and hence
$$
\delta(\Lambda)\int|f|^2d\mu\le \sum_{\lambda\in\Lambda_n}\left|\int f(x)e^{-2\pi i \ip{\lambda}{x}}d\mu(x)\right|^2 \le \int|f|^2d\mu .
$$
As ${\mathcal S}_n\subset{\mathcal S}_m$ for any $n<m$, we have
$$
\delta(\Lambda)\int|f|^2d\mu\le \sum_{\lambda\in\Lambda_m}\left|\int f(x)e^{-2\pi i \ip{\lambda}{x}}d\mu(x)\right|^2 \le \int|f|^2d\mu .
$$
By taking $m$ to infinity, we have (\ref{eq2.1i}).
\end{proof}

As we have discussed in the introduction, we cannot expect the standard orthogonal set to be always a spectrum. We have to look for some other alternatives. The main important observation is to distinguish two cases depending on whether the periodic zero set
\begin{equation}
\mathcal Z:=\{\xi\in\br^d : \widehat\mu(\xi+k)=0,\mbox{ for all }k\in\bz^d\}
\label{eqz}
\end{equation}
of $\mu(R,B)$ is empty or not.


For the case $\mathcal Z=\ty$, we will see that Theorem \ref{th1} follows along the same lines as Proposition \ref{prop_main}.  The case $\mathcal Z\neq\ty$ is more subtle but it can appear in higher dimensions $d$, but, in this case, the system $(R,B,L)$ has a special quasi-product form structure.

\subsection{The case $\mathcal Z=\ty$}

Given a sequence of integers $n_k$ and let $m_k = n_1+...+n_k$. The self-affine measure can be rewritten as
$$
\mu(R,B) = \delta_{R^{-n_1}B_{n_1}}\ast\delta_{R^{-m_2}B_{n_2}}\ast...\ast\delta_{R^{-m_k}B_{n_k}}\ast...
$$
Then note that if we have another set $J_{n_k}$ of integer vectors, with $J_{n_k}\equiv L_{n_k}$ (mod $(R^T)^{n_k}({\mathbb Z}^d)$), then it is easy to see that $(R^{n_k}, B_{n_k},J_{n_k})$ still form Hadamard triples. In this sequel, we produce many other mutually orthogonal sets, by:
\begin{equation}\label{eqLambda_k}
\Lambda_k = J_{n_1}+ (R^T)^{m_1}J_{n_2}+ (R^T)^{m_2} J_{n_3}+...+ (R^T)^{m_{k-1}} J_{n_{k}},
\end{equation}
\begin{equation}\label{eqlambda}
\ \Lambda = \bigcup_{k=1}^{\infty}\Lambda_k.
\end{equation}
Repeating the argument in Proposition \ref{prop_main}, the following holds true.

\begin{proposition}\label{prop_main1}
Suppose that $(R,B,L)$ is a Hadadmard triple and $
\Lambda$ if given in (\ref{eqlambda}). Assume that
\begin{equation}\label{eqdelta}
\delta(\Lambda): = \inf_{k\ge1}\inf_{\lambda\in\Lambda_k}|\widehat{\mu}((R^T)^{-m_k}\lambda)|^2>0
\end{equation}
Then $\Lambda$ is a spectrum for $L^2(\mu(R,B))$.
\end{proposition}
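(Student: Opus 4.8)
The plan is to repeat the proof of Proposition~\ref{prop_main} almost verbatim; the only genuinely new point is to check that the ``blocked'' system $(R^{m_k},B_{m_k},\Lambda_k)$ is again a Hadamard triple, so that the isometry identity $\|H_{m_k}\mathbf w\|=\|\mathbf w\|$ remains available. First I would set up the algebra: write $B_{m_k}=B_{n_1}+R^{n_1}B_{n_2}+\cdots+R^{m_{k-1}}B_{n_k}$ and, by \eqref{eqLambda_k}, $\Lambda_k=J_{n_1}+(R^T)^{n_1}J_{n_2}+\cdots+(R^T)^{m_{k-1}}J_{n_k}$. Each $(R^{n_j},B_{n_j},J_{n_j})$ is a Hadamard triple: its matrix entry $e^{2\pi i\langle R^{-n_j}b,\lambda\rangle}=e^{2\pi i\langle b,(R^T)^{-n_j}\lambda\rangle}$ with $b\in\mathbb Z^d$ is unchanged when $\lambda$ is replaced by an element of $\lambda+(R^T)^{n_j}\mathbb Z^d$, so the congruence $J_{n_j}\equiv L_{n_j}\ (\mathrm{mod}\ (R^T)^{n_j}\mathbb Z^d)$ shows its Hadamard matrix is a row permutation of the unitary Hadamard matrix of $(R^{n_j},B_{n_j},L_{n_j})$. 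The standard computation that a product of Hadamard triples is a Hadamard triple (already used in the excerpt for $(R^n,B_n,\Lambda_n)$) then shows $(R^{m_k},B_{m_k},\Lambda_k)$ is a Hadamard triple; in particular $\#\Lambda_k=N^{m_k}$, the family $E(\Lambda)$ is mutually orthogonal, and $H_{m_k}=\tfrac1{\sqrt{N^{m_k}}}\big(e^{-2\pi i\langle R^{-m_k}\mathbf b,\lambda\rangle}\big)_{\lambda\in\Lambda_k,\,\mathbf b\in B_{m_k}}$ is unitary.

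With this in hand I would run the frame-bound argument verbatim. The $m_k$-th level step functions $\mathcal S_{m_k}=\{\sum_{\mathbf b\in B_{m_k}}w_{\mathbf b}\mathbf 1_{\tau_{\mathbf b}(T(R,B))}\}$ form an increasing sequence whose union $\mathcal S$ is dense in $L^2(\mu)$, since $\{m_k\}$ is cofinal in $\mathbb N$. For $f=\sum_{\mathbf b\in B_{m_k}}w_{\mathbf b}\mathbf 1_{\tau_{\mathbf b}(T(R,B))}$, the computations \eqref{eq3.1}--\eqref{eq3.3} go through with $n$ replaced by $m_k$ and $\Lambda_n$ by $\Lambda_k$, giving $\int|f|^2\,d\mu=N^{-m_k}\|\mathbf w\|^2$ and
$$
\sum_{\lambda\in\Lambda_k}\Big|\int f(x)e^{-2\pi i\langle\lambda,x\rangle}d\mu(x)\Big|^2=\frac1{N^{m_k}}\sum_{\lambda\in\Lambda_k}\big|\widehat\mu((R^T)^{-m_k}\lambda)\big|^2\;\Big|\sum_{\mathbf b\in B_{m_k}}\tfrac{w_{\mathbf b}}{\sqrt{N^{m_k}}}e^{-2\pi i\langle R^{-m_k}\mathbf b,\lambda\rangle}\Big|^2 .
$$
Since $\delta(\Lambda)\le|\widehat\mu((R^T)^{-m_k}\lambda)|^2\le 1$ for $\lambda\in\Lambda_k$ (the upper bound because $\mu$ is a probability measure) and $\|H_{m_k}\mathbf w\|=\|\mathbf w\|$, the middle expression is pinched between $\delta(\Lambda)\int|f|^2d\mu$ and $\int|f|^2d\mu$.

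To conclude, for $l>k$ we have $\mathcal S_{m_k}\subset\mathcal S_{m_l}$ and $\Lambda_k\subset\Lambda_l$, so the same inequalities hold with $\Lambda_l$ in place of $\Lambda_k$; letting $l\to\infty$, so that $\Lambda_l\nearrow\Lambda$, yields $\delta(\Lambda)\|f\|^2\le\sum_{\lambda\in\Lambda}|\int f(x)e^{-2\pi i\langle\lambda,x\rangle}d\mu(x)|^2\le\|f\|^2$, first for $f\in\mathcal S$ and then, by density, for all $f\in L^2(\mu)$. The strictly positive lower bound forces completeness, so $\Lambda$ is a spectrum. I do not expect a real obstacle: all the substance is already in Proposition~\ref{prop_main}, and the only step needing care is the unitarity of the blocked matrix $H_{m_k}$ — that is, checking that replacing each $L_{n_j}$ by a congruent $J_{n_j}$ leaves the Hadamard property intact — which is exactly where the hypothesis $J_{n_j}\equiv L_{n_j}\ (\mathrm{mod}\ (R^T)^{n_j}\mathbb Z^d)$ is used.
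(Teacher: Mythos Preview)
Your proposal is correct and follows exactly the approach the paper intends: the paper's own proof is simply the sentence ``Repeating the argument in Proposition~\ref{prop_main}, the following holds true,'' and you have faithfully spelled out that repetition, including the one point that needs checking, namely that $(R^{m_k},B_{m_k},\Lambda_k)$ is again a Hadamard triple so that $H_{m_k}$ is unitary.
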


\medskip

The following proposition guarantees some $\Lambda$ will satisfy $\delta(\Lambda)>0$.

\begin{proposition}\label{prop_main2}
Suppose that ${\mathcal Z} = \emptyset$. Then there exists $\Lambda$ built as in \eqref{eqLambda_k} and \eqref{eqlambda} such that $\delta(\Lambda)>0$.
\end{proposition}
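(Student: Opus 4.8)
The plan is to exploit the hypothesis $\mathcal{Z} = \emptyset$ to choose the residue representatives $J_{n_k}$ and the block sizes $n_k$ so that the rescaled frequencies $(R^T)^{-m_k}\lambda$ stay in a region where $\widehat{\mu}$ is bounded away from zero. First I would observe that for any fixed $\lambda \in \Lambda_k$, formula \eqref{eqLambda_k} gives $(R^T)^{-m_k}\lambda = (R^T)^{-m_k}J_{n_1} + \dots + (R^T)^{-n_k}J_{n_k}$; the last summand $(R^T)^{-n_k}J_{n_k}$ is, modulo $\bz^d$, congruent to $(R^T)^{-n_k}L_{n_k}$ shifted by a lattice vector, and I have the freedom to pick $J_{n_k}$ inside a bounded fundamental domain. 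The point of the $J$'s (rather than the $L$'s) is precisely that I may translate each digit of $L_{n_k}$ by an element of $(R^T)^{n_k}(\bz^d)$ to land wherever is convenient.

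Next I would set up the key compactness argument. Since $\mathcal{Z} = \emptyset$, for every $\xi \in \br^d$ there is a $k(\xi) \in \bz^d$ with $\widehat\mu(\xi + k(\xi)) \neq 0$, and by continuity of $\widehat\mu$ this holds on a neighborhood of $\xi$. The function $\xi \mapsto \max_{k \in \bz^d} |\widehat\mu(\xi + k)|$ is lower semicontinuous, periodic, and everywhere positive, hence bounded below by some $c_0 > 0$ on the compact torus $\br^d/\bz^d$ (one needs that $\widehat\mu(\xi+k)\to 0$ as $|k|\to\infty$ uniformly on compacta, which follows from $\widehat\mu$ being the Fourier transform of a compactly supported probability measure together with a standard equicontinuity/decay estimate, or one can cover the torus by finitely many of the neighborhoods above). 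So: for every $\xi$ there is $k \in \bz^d$ with $|\widehat\mu(\xi+k)| \geq c_0$.

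Now I would run the construction inductively on $k$. Suppose $\Lambda_1, \dots, \Lambda_{k-1}$ have been built with all partial rescalings controlled. When choosing $J_{n_k}$ (equivalently, choosing for each $\ell \in L_{n_k}$ its representative), I want $(R^T)^{-m_k}\lambda$ to avoid the zero set of $\widehat\mu$ for every $\lambda \in \Lambda_k$. The idea is: the "tail" contribution from $J_{n_1}, \dots, J_{n_{k-1}}$, namely $(R^T)^{-m_k}(J_{n_1} + \dots + (R^T)^{m_{k-2}}J_{n_{k-1}})$, is small (it is $(R^T)^{-n_k}$ applied to a previously-bounded quantity, and $n_k$ can be taken large since $R^T$ is expansive), so it is a tiny perturbation. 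The main term is governed by the last block $(R^T)^{-n_k}J_{n_k}$, and here I use the freedom in the representatives together with the above compactness fact: for each $\ell$, pick the $\bz^d$-translate so that $\widehat\mu$ evaluated at the resulting point (plus the small tail) is $\geq c_0/2$, say. Choosing $n_k$ large enough at each stage that the accumulated tail errors are summable and stay within the neighborhoods furnished by uniform continuity of $\widehat\mu$, I get $|\widehat\mu((R^T)^{-m_k}\lambda)|^2 \geq (c_0/2)^2$ for all $k$ and all $\lambda \in \Lambda_k$, i.e. $\delta(\Lambda) \geq (c_0/2)^2 > 0$. Then Proposition \ref{prop_main1} finishes the proof.

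The main obstacle I anticipate is the bookkeeping in the inductive step: making precise that the freedom in choosing $J_{n_k}$ really does let one hit a point where $\widehat\mu \neq 0$ \emph{uniformly over all of $\Lambda_k$ at once}, not just for a single $\lambda$. The set $(R^T)^{-n_k}J_{n_k}$ has $\#L^{n_k}$ points, and one must place \emph{all} of them (plus their varying tails coming from the earlier blocks) into the good region simultaneously. The resolution should be that $J_{n_k} = L + R^T L + \dots + (R^T)^{n_k - 1}L$ with representatives chosen block-by-block, and that $(R^T)^{-n_k}J_{n_k}$ modulo $\bz^d$ is a union of translates of a fixed finite set; one argues that the \emph{whole} finite configuration, after a suitable lattice translation, lies in a region where $\widehat\mu$ is bounded below — this is where the hypothesis $\mathcal{Z} = \emptyset$ is used in its full strength (it says $\widehat\mu$ has no common zero over an entire $\bz^d$-coset, i.e. over exactly such a configuration). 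I would expect the paper to phrase this via a lemma asserting that $\mathcal{Z} = \emptyset$ forces $\inf_\xi \max_k |\widehat\mu(\xi+k)| > 0$ and then a careful choice of the $n_k$ growing fast enough to absorb all perturbations.
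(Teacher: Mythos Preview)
Your approach is essentially the paper's: a compactness lemma from $\mathcal Z=\emptyset$, then an inductive choice of $n_k$ large enough that the tail $(R^T)^{-m_k}\lambda'$ (with $\lambda'\in\Lambda_{k-1}$) is a small perturbation, and a choice of lattice shift for each element of $J_{n_k}$ to land $\widehat\mu$ above a fixed $\delta_0$. Your final sentence (``a lemma asserting $\inf_\xi\max_k|\widehat\mu(\xi+k)|>0$, then $n_k$ growing fast enough to absorb perturbations'') is exactly how the paper is organized; the paper just restricts the compactness argument to $X=T(R^T,L)$ rather than the full torus, since $(R^T)^{-n}J_n\subset X$ automatically.

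Two remarks. First, the claim that $\widehat\mu(\xi+k)\to 0$ as $|k|\to\infty$ is false for singular measures (Riemann--Lebesgue fails), so drop that; your finite-cover alternative, or the lower-semicontinuity argument for $\sup_k|\widehat\mu(\cdot+k)|$, is the correct route. Second, the ``obstacle'' you worry about dissolves more simply than your ``translate the whole configuration'' idea suggests: the lemma is stated so that for each $x$ there is a single $k_x$ that works uniformly on the ball $B(x,\epsilon_0)$. Hence you pick $k(j)$ depending only on $j$ (via $x=(R^T)^{-n_k}j$), and the \emph{same} $k(j)$ handles every tail $\lambda'\in\Lambda_{k-1}$ at once, because those tails have all been pushed inside the $\epsilon_0$-ball by the choice of $n_k$. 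No simultaneous placement of a configuration is needed.
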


We now give the proof of this proposition. We start with a lemma.

\begin{lemma}\label{lem2.1}
Suppose that ${\mathcal Z} = \emptyset$ and let $X$ be any compact set on ${\mathbb R}^d$. Then there exist $\epsilon_0>0$, $\delta_0>0$ such that for all $x\in X$, there exists $k_x\in{\mathbb Z}^d$ such that for all $y\in\br^d$ with $\|y\|<\epsilon_0$, we have $|\widehat\mu(x+y+k_x)|^2\geq \delta_0$. In addition, we can choose $k_0=0$ if $0\in X$.
\end{lemma}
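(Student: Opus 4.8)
\textbf{Proof proposal for Lemma \ref{lem2.1}.}

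The plan is to exploit compactness twice and the continuity of $\widehat\mu$, using the hypothesis $\mathcal Z=\emptyset$ pointwise and then upgrading to a uniform statement. First I would observe that $\widehat\mu$ is continuous (indeed entire) and that the periodicization condition defining $\mathcal Z$ being empty means: for every $x\in\br^d$ there exists $k=k(x)\in\bz^d$ with $\widehat\mu(x+k)\neq0$. Fix a compact set $X$. For each $x\in X$ pick such a $k_x$; since $x\mapsto \widehat\mu(x+k_x)$ is continuous and nonzero at $x$, there is an open ball $U_x$ around $x$ and a number $\delta_x>0$ such that $|\widehat\mu(z+k_x)|^2\ge\delta_x$ for all $z\in U_x$. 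The collection $\{U_x : x\in X\}$ covers the compact set $X$, so finitely many $U_{x_1},\dots,U_{x_m}$ suffice. Set $\delta_0=\min_i\delta_{x_i}>0$. Then for any $x\in X$, $x$ lies in some $U_{x_i}$, and we may take the integer vector attached to $x$ to be $k_{x_i}$; the estimate $|\widehat\mu(x+k_{x_i})|^2\ge\delta_0$ holds.

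To get the extra slack by $y$ with $\|y\|<\epsilon_0$, I would either incorporate it directly into the covering argument or do a second compactness step. The clean way: since each $U_{x_i}$ is open and $X$ is compact with $X\subset\bigcup_i U_{x_i}$, a Lebesgue-number argument gives $\epsilon_0>0$ so that for every $x\in X$ the ball $B(x,\epsilon_0)$ is contained in some single $U_{x_i}$. Then for all $y$ with $\|y\|<\epsilon_0$ we have $x+y\in U_{x_i}$, hence $|\widehat\mu(x+y+k_{x_i})|^2\ge\delta_0$, and we set $k_x:=k_{x_i}$. Finally, to arrange $k_0=0$ when $0\in X$: note $\widehat\mu(0)=\mu(\br^d)=1\neq0$, so $0$ itself has a neighborhood on which $|\widehat\mu|^2$ is bounded below; I would simply include the ball around $0$ with the choice $k=0$ among the finitely many sets from the start (or shrink $\epsilon_0$ so that $B(0,\epsilon_0)$ is handled by this distinguished set), and take the final $\epsilon_0$, $\delta_0$ to be the minimum over this augmented finite collection.

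The only mildly delicate point is the interplay of the two ``$\epsilon$'' choices — making sure a single $\epsilon_0$ works simultaneously as a Lebesgue number for the cover and is compatible with the $0\in X$ normalization — but this is routine once the finite subcover is fixed; there is no real obstacle. It is worth emphasizing that the hypothesis $\mathcal Z=\emptyset$ is used precisely to guarantee that for \emph{every} $x$ \emph{some} integer translate lands off the zero set of $\widehat\mu$; without it, the pointwise choice of $k_x$ would fail and no amount of compactness would rescue the argument. Once Lemma \ref{lem2.1} is in hand, Proposition \ref{prop_main2} follows by applying it with $X$ the attractor $T(R,B)$ (or a suitable compact set containing all relevant rescaled frequencies), choosing the residues $J_{n_k}$ and the block sizes $n_k$ so that the points $(R^T)^{-m_k}\lambda$ stay within $\epsilon_0$-neighborhoods of points of $X$ and the translates $k_x$ are absorbed into the freedom of choosing $J_{n_k}$ modulo $(R^T)^{n_k}(\bz^d)$, thereby forcing $\delta(\Lambda)\ge\delta_0>0$ via Proposition \ref{prop_main1}.
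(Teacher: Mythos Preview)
Your proof is correct and follows essentially the same approach as the paper: pointwise choice of $k_x$ via $\mathcal Z=\emptyset$, continuity of $\widehat\mu$, and a compactness/finite-subcover argument to uniformize $\delta_0$ and $\epsilon_0$; the paper uses the half-radius covering trick where you invoke a Lebesgue number, but these are interchangeable. One small slip in your closing remarks: in the application to Proposition~\ref{prop_main2} the paper takes $X=T(R^T,L)$ (the attractor on the frequency side), not $T(R,B)$.
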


\begin{proof}
As ${\mathcal Z} = \emptyset$, for all $x\in X$ there exists $k_x\in\Gamma$ such that $\widehat\mu(x+k_x)\neq 0$. Since $\widehat\mu$ is continuous, there exists an open ball $B(x,\epsilon_x)$ and $\delta_x>0$ such that $|\widehat\mu(y+k_x)|^2\geq\delta_x$ for all $y\in B(x,\epsilon_x)$. Since $X$ is compact, there exist $x_1,\dots,x_m\in X$ such that
$$X\subset\bigcup_{i=1}^mB(x_i,\frac{\epsilon_{x_i}}2).$$
Let $\delta:=\min_i\delta_{x_i}$ and $\epsilon:=\min_i\frac{\epsilon_{x_i}}{2}$. Then, for any $x\in X$, there exists $i$ such that $x\in B(x_i,\frac{\epsilon_{x_i}}2)$. If $\|y\|<\epsilon$, then $x+y\in B(x_i,\epsilon_{x_i})$, so $|\widehat \mu(x+y+k_{x_i})|^2\geq \delta$, we can redefine $k_x$ to be $k_{x_i}$ to obtain the conclusion. Clearly, we can choose $k_0=0$ if $0\in X$ since $\widehat{\mu}(0)=1$.
\end{proof}

\medskip

\noindent{\it Proof of Proposition \ref{prop_main2}.}  Suppose that $(R,B,L)$ is a Hadamard triple $(R,B,L)$. Then we take $X = T(R^T,L)$, the self-affine set generated by $R^T$ and digit set $L$. Define $J_{n} = L+R^{T}L+...+(R^T)^{n-1}L$  
 By the definition of self-affine sets, 
$$
(R^{T})^{-(n+p)}J_{n}\subset X , \quad(n\in{\mathbb N},p\geq 0).
$$

Fix the $\epsilon_0$ and $\delta_0$ in Lemma \ref{lem2.1}. We now construct the sets $\Lambda_k$ and $\Lambda$ as in \eqref{eqLambda_k} and \eqref{eqlambda}, by replacing the sets $J_{n_k}$ by some sets $\widehat J_{n_k}$ to guarantee that the number $\delta(\Lambda)$ in \eqref{eqdelta} is positive.

 We first start with $\Lambda_{0}: = \{0\} $ and $m_0=n_0=0$. Assuming that $\Lambda_k$ has been constructed, we first choose our $n_{k+1}>n_k$ so that
\begin{equation}\label{eq4.5}
\|(R^T)^{-(n_{k+1}+p)}\lambda\|<\epsilon_0 , \ \forall \ \lambda\in\Lambda_k, p\geq0.
\end{equation}
We then define $m_{k+1} = m_k+n_{k+1}$ and
$$
\Lambda_{k+1} = \Lambda_k+(R^T)^{m_{k}}\widehat{J_{n_{k+1}}}
$$
where
$$
\widehat{J_{n_{k+1}}} = \{j+(R^T)^{n_{k+1}}k(j): j\in J_{n_{k+1}}, \ k(j)\in{\mathbb Z}^d  \}
$$
where $k(j)$ is chosen to be $k_x$ from Lemma \ref{lem2.1}, with $x = (R^{T})^{-n_{k+1}}j  \in X$. As $0\in J_{n_{k}}$ and $k_0=0$ for all $k$, the sets $\Lambda_k$ are of the form (\ref{eqLambda_k}) and  form an increasing sequence. For these sets $\Lambda_k$, we claim that the associated $\Lambda$ in $(\ref{eqlambda})$ satisfies $\delta(\Lambda)>0$.

\medskip

To justify the claim, we note that if $\lambda\in \Lambda_{k}$, then
$$
\lambda = \lambda'+(R^T)^{m_{k-1}}j  +(R^T)^{m_{k}}k(j),
$$
where $\lambda'\in \Lambda_{k-1}$, $j\in J_{n_{k}}$. This means that
$$
(R^T)^{-m_k}\lambda = (R^T)^{-m_k}\lambda'+(R^T)^{-n_k}j+ k(j).
$$
By (\ref{eq4.5}), $\|(R^T)^{-m_k}\lambda'\|<\epsilon_0$. From Lemma \ref{lem2.1}, since $(R^T)^{-n_k}j\in X$, we must have $|\widehat{\mu}((R^T)^{-m_k}\lambda)|^2\geq\delta_0>0$. As $\delta_0$ is independent of $k$, the claim is justified and hence this completes the proof of the proposition.
\qquad$\Box$.

Combining Proposition \ref{prop_main1} and Proposition \ref{prop_main2}, we settle the case ${\mathcal Z}=\emptyset$.

\begin{theorem}\label{th_Z}
Suppose that ${\mathcal Z} = \emptyset$ and $(R,B,L)$ is a Hadamard triple. Then the self-affine measure $\mu(R,B)$ is a spectral measure.
\end{theorem}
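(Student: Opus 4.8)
The plan is to obtain Theorem \ref{th_Z} by assembling Propositions \ref{prop_main1} and \ref{prop_main2}, since between them they already contain all the substance; the residual work is bookkeeping and the preliminary normalizations. First I would invoke the reductions recorded at the beginning of this section: after conjugating $(R,B,L)$ by a suitable integer matrix one may assume $\mathbb{Z}[R,B]=\mathbb{Z}^d$, which leaves the Hadamard property intact, changes $\widehat\mu$ only by an invertible linear substitution in the frequency variable (so in particular the hypothesis $\mathcal Z=\emptyset$ is preserved), and guarantees that $B$ is a simple digit set for $R$ and hence that $\mu(R,B)$ satisfies the no-overlap condition. This last point is what makes the spaces $\mathcal S_n$ of $n$-th level step functions behave well --- it validates the orthogonality computation \eqref{eq3.1} and makes $\mathcal S=\bigcup_n\mathcal S_n$ dense in $L^2(\mu)$ --- so that it suffices throughout to test the frame inequalities on $\mathcal S$.

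Next, with $\mathcal Z=\emptyset$ in force, Proposition \ref{prop_main2} hands us an explicit admissible choice: a sequence of scales $n_k$, congruence-modified digit sets $\widehat J_{n_k}\equiv J_{n_k}$ (mod $(R^T)^{n_k}(\mathbb{Z}^d)$), and the corresponding $\Lambda_k$ of \eqref{eqLambda_k} and $\Lambda=\bigcup_k\Lambda_k$ of \eqref{eqlambda}, for which $\delta(\Lambda)$ as in \eqref{eqdelta} is strictly positive. I would keep in view the mechanism of that proof, because it is the heart of the matter: passing from $\Lambda_k$ to $\Lambda_{k+1}$, one first takes $n_{k+1}$ large enough that $(R^T)^{-(n_{k+1}+p)}$ pushes every element of $\Lambda_k$ into the $\epsilon_0$-ball produced by Lemma \ref{lem2.1}, and then one shifts each digit $j\in J_{n_{k+1}}$ by the lattice vector $(R^T)^{n_{k+1}}k(j)$, where $k(j)=k_x$ is the integer vector from the Lemma evaluated at $x=(R^T)^{-n_{k+1}}j$, a point of the compact set $X=T(R^T,L)$. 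Since $0\in J_{n_{k+1}}$ and $k_0=0$, the modified sets still have the form \eqref{eqLambda_k} and still increase; and for $\lambda\in\Lambda_k$ the decomposition $(R^T)^{-m_k}\lambda=(R^T)^{-m_k}\lambda'+(R^T)^{-n_k}j+k(j)$ with $\|(R^T)^{-m_k}\lambda'\|<\epsilon_0$ lets Lemma \ref{lem2.1} force $|\widehat\mu((R^T)^{-m_k}\lambda)|^2\ge\delta_0$ with $\delta_0$ independent of $k$.

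With this $\Lambda$ chosen, Proposition \ref{prop_main1} applies word for word. The Hadamard matrices $H_{m_k}$ attached to the systems $(R^{m_k},B_{m_k},\Lambda_k)$ are unitary --- products of Hadamard triples are Hadamard triples, and replacing the $L$-type digit sets by congruent ones does not affect this --- so $E(\Lambda)$ is automatically orthogonal; and the identities \eqref{eq3.1}--\eqref{eq3.3} together with $\delta(\Lambda)\le|\widehat\mu((R^T)^{-m_k}\lambda)|^2\le 1$ yield the two-sided estimate \eqref{eq2.1i} on $\mathcal S$, hence on all of $L^2(\mu)$ by density. A positive lower frame bound is precisely completeness, so $\Lambda$ is a spectrum and $\mu(R,B)$ is a spectral measure, as claimed.

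The one genuinely delicate point is the one already absorbed into Lemma \ref{lem2.1} and Proposition \ref{prop_main2}: producing a single $\delta_0>0$ that bounds $|\widehat\mu|^2$ below \emph{simultaneously} at all the deep scales $(R^T)^{-m_k}\lambda$. This is where the hypothesis $\mathcal Z=\emptyset$ is consumed --- it says that each $x$ has an integer translate avoiding the zero set of $\widehat\mu$, and continuity of $\widehat\mu$ together with compactness of $T(R^T,L)$ upgrade this to a uniform lower bound on a neighborhood; everything else is the formalism of Proposition \ref{prop_main}. (The complementary case $\mathcal Z\neq\emptyset$, which forces a quasi-product structure on $(R,B,L)$ and requires a separate, substantially harder argument, is not needed here and enters only in the proof of Theorem \ref{th1}.)
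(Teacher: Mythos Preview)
Your proposal is correct and follows exactly the paper's approach: the paper's proof of Theorem~\ref{th_Z} is literally the single sentence ``Combining Proposition~\ref{prop_main1} and Proposition~\ref{prop_main2}, we settle the case $\mathcal Z=\emptyset$,'' and your write-up simply unpacks that combination (together with the preliminary reductions recorded earlier in the section) in greater detail than the paper itself does.
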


%
%
%

\medskip

\subsection{The case $\mathcal Z\neq \ty$} When $\mathcal Z\neq \ty$, it means that there is an exponential function $e_{\xi}$ such that it is orthogonal to every exponential function with integer frequencies. This implies that none of the subsets of integers can be complete. Therefore, any construction of orthogonal sets within integers must fail to be spectral. We illustrate the situation through a simple example.

\begin{example}\label{example5.3}
Let $R = \left[
           \begin{array}{cc}
             4 & 0 \\
             1 & 2 \\
           \end{array}
         \right]
$, $$B= \left\{\left[
                \begin{array}{c}
                  0 \\
                  0 \\
                \end{array}
              \right], \left[
                \begin{array}{c}
                  0 \\
                  3\\
                \end{array}
              \right], \left[
                \begin{array}{c}
                  1 \\
                  0 \\
                \end{array}
              \right], \left[
                \begin{array}{c}
                  1 \\
                  3 \\
                \end{array}
              \right]
\right\} \ \mbox{and} \
 L= \left\{\left[
                \begin{array}{c}
                  0 \\
                  0 \\
                \end{array}
              \right], \left[
                \begin{array}{c}
                  2 \\
                  0\\
                \end{array}
              \right],  \left[
                \begin{array}{c}
                  0 \\
                  1 \\
                \end{array}
              \right], \left[
                \begin{array}{c}
                  2 \\
                  1 \\
                \end{array}
              \right]
\right\}.$$ Then $(R,B,L)$ forms a  Hadamard triple and ${\mathbb Z}[R,B] = {\mathbb Z}^2$. However, the set  defined in \eqref{eqz} ${\mathcal Z}\neq \emptyset$ for the measure $\mu = \mu(R,B)$.
\end{example}

\begin{proof}
It is a direct check to see $(R,B,L)$ forms a  Hadamard triple and ${\mathbb Z}[R,B] = {\mathbb Z}^2$.  As $M_B(\xi_1,\xi_2) = \frac{1}{4}(1+e^{2\pi i \xi_1})(1+e^{2\pi i 3\xi_2})$. It follows that the zero set of $M_B$, denoted by $Z(M_B)$, is equal to
$$
Z(M_B) = \left\{\left[
                \begin{array}{c}
                 \frac12+n \\
                  y \\
                \end{array}
              \right]: n\in{\mathbb Z}, y\in {\mathbb R}\right\}\cup \left\{\left[
                \begin{array}{c}
                  x \\
                  \frac16+\frac13 n  \\
                \end{array}
              \right]:x\in {\mathbb R}, n\in{\mathbb Z}\right\}.
$$
Let $(R^T)^j = \left[
           \begin{array}{cc}
             4^j & a_j \\
              0& 2^j \\
           \end{array}
         \right]$, for some $a_j\in{\mathbb Z}$. As $\widehat{\mu}(\xi) = \prod_{j=1}^{\infty}M_B((R^T)^{-j}(\xi))$, the zero set of $\widehat{\mu}$, denoted by $Z(\widehat{\mu})$, is equal to
$$\begin{aligned}
Z(\widehat{\mu}) =& \bigcup_{j=1}^{\infty}(R^{T})^{j} Z(M_B) \nonumber\\
=&\bigcup_{j=1}^{\infty}\left\{\left[
                \begin{array}{c}
                 4^j(\frac12+n)+a_jy \\
                  2^jy \\
                \end{array}
              \right]: n\in{\mathbb Z}, y\in {\mathbb R}\right\}\cup \left\{\left[
                \begin{array}{c}
                  4^jx+a_j(\frac16+\frac13 n) \\
                  2^{j}\left(\frac16+\frac13 n\right)  \\
                \end{array}
              \right]:x\in {\mathbb R}, n\in{\mathbb Z}\right\}.
\end{aligned}
$$
We claim that the points in  $\left[
                \begin{array}{c}
                 0 \\
                  \frac13 \\
                \end{array}
              \right]+{\mathbb Z}^2$ are in $Z(\widehat{\mu})$ which shows ${\mathcal Z}\neq \emptyset$. Indeed, for any $\left[
                \begin{array}{c}
                 m \\
                  \frac13+n \\
                \end{array}
              \right]$, $m,n\in{\mathbb Z}$, we can write it as $\left[
                \begin{array}{c}
                 m \\
                  \frac{1+3n}{3} \\
                \end{array}
              \right]$. We now rewrite the second term in the union in $Z(\widehat{\mu})$ as ${\mathbb R}\times \{\frac{2^{j-1}(1+2n)}{3}\}$. As any integer can be written as $2^j p$, for some $j\ge 0$ and odd number $p$, this means that $\left[
                \begin{array}{c}
                 m \\
                  \frac{1+3n}{3} \\
                \end{array}
              \right]\in Z(\widehat{\mu})$, justifying the claim. As ${\mathcal Z}\neq\emptyset,$ this shows that there is no spectrum in ${\mathbb Z}^2$ for this measure.  In fact,
              $$
T(R,B) = \bigcup_{x\in K_1}\{x\}\times ([0,3]+g(x)),
$$
where $K_1$ is the Cantor set of $1/4$ contraction ratio and digit $\{0,1\}$ and $g:[0,1]\rightarrow{\mathbb R}$ is a measurable function obtaining from the off-diagonal entries.

\end{proof}
\medskip

To overcome this obstruction, as we will see, we prove that ${\mathcal Z}$ has a dynamical structure and, from this structure, we obtain that such Hadamard triples have to have a special quasi-product form, and moreover, in one of the factors, the digits form complete set of representatives.

\begin{proposition}\label{pr1.14}
Suppose that $(R,B,L)$ forms a Hadamard triple and ${\mathbb Z}[R,B] = {\mathbb Z}^d$ and let $\mu = \mu(R,B)$ be the associated self-affine measure $\mu=\mu(R,B)$. Suppose that the set

$$\mathcal Z:=\left\{x\in \br^d : \widehat\mu (x+k)=0\mbox{ for all }k\in\bz^d\right\},$$
is non-empty. Then there exists an integer matrix $M$ with $\det M=1$ such that the following assertions hold:

\begin{enumerate}
	\item The matrix $\tilde R:=MRM^{-1}$ is of the form
	\begin{equation}
	\tilde R=\begin{bmatrix} R_1&0\\ C &R_2\end{bmatrix},
	\label{eq1.14.1}
	\end{equation}
	with $R_1\in M_r(\bz)$, $R_2\in M_{d-r}(\bz)$ expansive integer matrices and  $C\in M_{(d-r)\times r}(\bz)$.
	\item If $\tilde B=MB$ and $\tilde L=(M^T)^{-1}L$, then $(\tilde R, \tilde B,\tilde L)$ is a Hadamard triple.
	\item The measure $\mu(R,B)$ is spectral with spectrum $\Lambda$ if and only if the measure $\mu(\tilde R,\tilde B)$ is spectral with spectrum $(M^T)^{-1}\Lambda$.
	\item There exists $y_0\in\br^{d-r}$ such that $(R_2^T)^my_0\equiv y_0(\mod (R_2^T)\bz^d)$ for some integer $m\geq 1$ such that the union
	$$\tilde {\mathcal S}=\bigcup_{k=0}^{m-1}(\br^r\times \{(R_2^T)^ky_0\}+\bz^d)$$
	is contained in the set
	$$\tilde{\mathcal Z}:=\left\{x\in \br^d : \widehat{\tilde \mu} (x+k)=0\mbox{ for all }k\in\bz^d\right\},$$
	where $\tilde \mu=\mu(\tilde R,\tilde B)$. The set $\tilde{\mathcal S}$ is invariant (with respect to the system $(m_{\tilde B},\tilde R^T,\tilde {\cj L},)$, see the definition below and Definition \ref{definv}) where $\tilde {\cj L}$ is a complete set of representatives $(\mod \tilde R^T\bz^d)$. In addition, all possible transitions from a point in $\br^r\times \{(R_2^T)^ky_0\}+\bz^d$, $1\leq k\leq m$ lead to a point in $\br^r\times \{(R_2^T)^{k-1}y_0\}+\bz^d$.
\end{enumerate}
\end{proposition}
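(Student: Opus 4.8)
The plan is to reduce all four assertions to a structural description of the periodic zero set $\mathcal Z$, and to derive that description from the identity
\[
\sum_{\ell\in L}\left|m_B\big((R^T)^{-1}(\xi+\ell)\big)\right|^2=1\qquad(\xi\in\br^d),
\]
where $m_B(\xi):=\frac1N\sum_{b\in B}e^{2\pi i\langle b,\xi\rangle}$ is the mask of $\mu$, so that $\widehat\mu(\xi)=\prod_{j\geq1}m_B((R^T)^{-j}\xi)$. This identity is just the orthonormality of the columns of the Hadamard matrix $H$ after periodization; it uses that $L$, being a simple digit set for $R^T$ of cardinality $N$, is a complete set of residues modulo $R^T\bz^d$. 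Granting the structural statement for $\mathcal Z$, assertions (ii) and (iii) are formal: since $m_{\tilde B}(\xi)=m_B(M^T\xi)$ and $\tilde R^T=(M^T)^{-1}R^TM^T$, one gets $\widehat{\tilde\mu}=\widehat\mu\circ M^T$, hence $\tilde{\mathcal Z}=(M^T)^{-1}\mathcal Z$; the Hadamard matrix of $(\tilde R,\tilde B,\tilde L)$ agrees entrywise with that of $(R,B,L)$ because $\langle\tilde R^{-1}\tilde b,\tilde\ell\rangle=\langle R^{-1}b,\ell\rangle$; and the push-forward map $x\mapsto Mx$ induces a unitary $L^2(\mu(\tilde R,\tilde B))\to L^2(\mu)$ carrying $e_{(M^T)^{-1}\lambda}$ to $e_\lambda$, which is exactly (iii). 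The hypothesis $\det M=1$ ensures $M$, $M^{-1}$ and $(M^T)^{-1}$ are integer matrices, so $\tilde R,\tilde B,\tilde L$ are integral.

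The first genuine step is a backward invariance lemma: \emph{if $x\in\mathcal Z$, then $(R^T)^{-1}(x+\ell)\in\mathcal Z$ for at least one $\ell\in L$}. Indeed, fix $x\in\mathcal Z$. Since $\{x+\ell+R^Tj:\ell\in L,\,j\in\bz^d\}=x+\bz^d$, the factorization $\widehat\mu(\eta)=m_B((R^T)^{-1}\eta)\widehat\mu((R^T)^{-1}\eta)$ and the $\bz^d$-periodicity of $m_B$ give, for each $\ell\in L$, either $m_B((R^T)^{-1}(x+\ell))=0$ or $\widehat\mu((R^T)^{-1}(x+\ell)+j)=0$ for every $j\in\bz^d$, i.e.\ $(R^T)^{-1}(x+\ell)\in\mathcal Z$. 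If the second alternative failed for every $\ell$, the displayed identity evaluated at $\xi=x$ would read $0=1$. Iterating, every point of $\mathcal Z$ admits an infinite backward orbit inside $\mathcal Z$ under the expanding toral endomorphism $\xi\mapsto R^T\xi$, and since $(R^T)^{-1}$ is a contraction these orbits accumulate on the compact attractor $T(R^T,L)$.

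The main obstacle is the structure theorem for $\mathcal Z$: one must show that $\mathcal Z$ is a finite union of $\bz^d$-periodic families of parallel rational affine hyperplanes, and in particular that it contains a $\bz^d$-translated family of cosets of a nonzero proper rational subspace $W$ with $R^TW=W$, these cosets being indexed by a finite backward $R^T$-orbit (a periodic cycle) of a rational point $x_\ast$ with $(R^T)^mx_\ast\equiv x_\ast\pmod{\bz^d}$ for some $m\geq1$. (Properness of $W$ is forced by $\widehat\mu\not\equiv0$; rationality of $x_\ast$ follows since $((R^T)^m-I)x_\ast\in\bz^d$ and $(R^T)^m-I$ is invertible over $\Q$, $R^T$ being expansive.) A priori $\mathcal Z\subset Z(\widehat\mu)=\bigcup_{j\geq1}(R^T)^jZ(m_B)$ is only known to lie in a countable union of pull-backs of the real-analytic variety $Z(m_B)$; the point is that the requirement ``$x+\bz^d\subset Z(\widehat\mu)$'' --- that is, the interaction of the zero set of the trigonometric polynomial $m_B$ with the lattice $\bz^d$, together with the standing reduction $\bz[R,B]=\bz^d$ --- forces $m_B$ to vanish identically along whole rational cosets and forces the backward orbit of each point of $\mathcal Z$ to collapse onto a periodic cycle in the finite set of $R^T$-periodic points of $\br^d/\bz^d$. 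This is where essentially all the work goes, and the quasi-product form is extracted from it.

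Finally, the reduction to coordinates is routine. Put $r=\dim W$; since $W$ is rational, $W\cap\bz^d$ is a saturated sublattice of rank $r$, so an integer basis of it extends to a $\bz$-basis of $\bz^d$, yielding (after adjusting a sign) a unimodular integer matrix $M$ with $\det M=1$ and $M^T(\br^r\times\{0\})=W$. Because $R^TW=W$, the subspace $\br^r\times\{0\}$ is invariant under $\tilde R^T=(M^T)^{-1}R^TM^T$, so $\tilde R=MRM^{-1}$ takes the block lower-triangular form required in (i), its diagonal blocks $R_1\in M_r(\bz)$ and $R_2\in M_{d-r}(\bz)$ being expansive because their eigenvalues are among those of $R$. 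Transporting the coset description of $\mathcal Z$ through $(M^T)^{-1}$, and using $\tilde{\mathcal Z}=(M^T)^{-1}\mathcal Z$, produces $y_0\in\br^{d-r}$ with $(R_2^T)^my_0\equiv y_0$ (modulo the appropriate lattice) and $\tilde{\mathcal S}=\bigcup_{k=0}^{m-1}(\br^r\times\{(R_2^T)^ky_0\}+\bz^d)\subseteq\tilde{\mathcal Z}$. The invariance of $\tilde{\mathcal S}$ under the system $(m_{\tilde B},\tilde R^T,\tilde{\cj L})$, and the assertion that every admissible transition from the $k$-th slice lands in the $(k-1)$-st, are the backward invariance lemma rewritten in the new coordinates, together with the observation that the branches that would leave the slice family are exactly those on which $m_{\tilde B}$ vanishes --- which is guaranteed by the same factorization of the mask that produced the block-triangular $\tilde R$.
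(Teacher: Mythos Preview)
Your overall strategy coincides with the paper's: one shows that $\mathcal Z$ is an invariant set in the sense of Definition~\ref{definv} (equivalently, in the sense of Cerveau--Conze--Raugi \cite{CCR}), and then the structure theorem you call the ``main obstacle'' is precisely what the paper imports from \cite{CCR} to force the rational affine-subspace description of $\mathcal Z$ and hence the block-triangular form of $\tilde R$. So there is no disagreement on the architecture, and your treatment of (ii), (iii) and the final coordinate change is correct.

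There is, however, a recurring slip: you assert that $L$ is a complete set of residues modulo $R^T\bz^d$, and you use this both to justify the identity $\sum_{\ell\in L}|m_B((R^T)^{-1}(\xi+\ell))|^2=1$ and to write $\{x+\ell+R^Tj:\ell\in L,\,j\in\bz^d\}=x+\bz^d$. This is false in general: for a Hadamard triple one only has $\#L=\#B=N\le|\det R|$, with strict inequality in the genuinely fractal regime (e.g.\ $R=4$, $B=\{0,2\}$, $L=\{0,1\}$). Fortunately neither claim is actually needed. The identity is exactly the Parseval relation for the spectral pair $(\delta_{R^{-1}B},L)$ and holds from the Hadamard property alone; this is \eqref{M_B} in the paper. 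And your invariance computation works verbatim for \emph{any} $k\in\bz^d$, not just $\ell\in L$: from $0=\widehat\mu(x+k+R^Tj)=m_B((R^T)^{-1}(x+k))\,\widehat\mu((R^T)^{-1}(x+k)+j)$ one gets the dichotomy directly, with no appeal to completeness of $L$. This is exactly what the proposition means by invariance with respect to a \emph{complete} set $\tilde{\cj L}$ of representatives $(\bmod\ \tilde R^T\bz^d)$, and it is the hypothesis the \cite{CCR} machinery requires. Once you correct the phrasing, your sketch matches the paper's proof.
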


The key fact in the proof of this proposition is that the set $\mathcal Z$ is invariant in the sense defined by Conze et al. in \cite{CCR}. That means that if $x\in\mathcal Z$, $k\in \bz^d$ and $m_B((R^T)^{-1}(x+k))\neq 0$ then $(R^T)^{-1}(x+k)$ is in $\mathcal Z$,  where
$$m_B(x)=\frac{1}{N}\sum_{b\in B}e^{2\pi i \ip{b}{x}}.$$
Then, the results from \cite{CCR} show that $\mathcal Z$ must have a very special form, and this implies the proposition.

\begin{theorem}\label{th_quasi}
Suppose that $$R=\begin{bmatrix}
R_1& 0\\ C& R_2
\end{bmatrix},
$$$(R,B,L_0)$ is a Hadamard triple and $\mu = \mu(R,B)$ is the associated self-affine measure and ${\mathcal Z}\neq \emptyset$. Then the set $B$ has the following quasi-product form:
\begin{equation}
B=\left\{(u_i,v_i+Qc_{i,j})^T : 1\leq i\leq N_1, 1\leq j\leq|\det R_2|\right\},
\label{eq1.18.1}
\end{equation}
where
 \begin{enumerate}\item $N_1 = N/|\det R_2|$, \item $Q$ is a $(d-r)\times (d-r)$ integer matrix with $|\det Q|\geq 2$ and $R_2Q=Q\tilde R_2$ for some $(d-r)\times(d-r)$ integer matrix $\widetilde{R_2}$, \item the set $\{Qc_{i,j}: 1\leq j\leq |\det R_2|\}$ is a complete set of representatives $(\mod R_2(\bz^{d-r}))$, for all $1\leq i\leq N_1$.
\end{enumerate}
\medskip

Moreover, one can find some $L\equiv L_0 (\mod \ R^T(\mathbb Z^d))$ so that $(R,B,L)$ is a Hadamard triple and $(R_1,\pi_1(B), L_1(\ell_2))$ and $(R_2,B_2(b_1),\pi_2(L))$ are Hadamard triples on ${\mathbb R}^r$ and ${\mathbb R}^{d-r}$ respectively for all $b_1\in\pi_1(B)$ and $l_2\in\pi_2(L)$.
\end{theorem}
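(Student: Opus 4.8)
The plan is to prove the two assertions in turn: first the quasi-product form \eqref{eq1.18.1} of $B$, and then the existence of an adapted digit set $L$.

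Since the block form of $R$ assumed here is exactly the $\tilde R$ produced by Proposition~\ref{pr1.14}, I inherit all its conclusions; in particular there are $y_0\in\br^{d-r}$ and $m\geq 1$ with $(R_2^T)^m y_0\equiv y_0\pmod{R_2^T\bz^{d-r}}$ such that $\tilde{\mathcal S}=\bigcup_{k=0}^{m-1}\bigl(\br^r\times\{(R_2^T)^k y_0\}+\bz^d\bigr)\subseteq\mathcal Z$, with $\tilde{\mathcal S}$ invariant in the sense of \cite{CCR} and all transitions cyclically sending level $k$ to level $k-1$. Writing $b=(u,v)$ and $B_u=\{v:(u,v)\in B\}$, I decompose the mask as
$$m_B(x_1,x_2)=\sum_{u\in\pi_1(B)}P_u(x_2)\,e^{2\pi i\langle u,x_1\rangle},\qquad P_u(x_2)=\frac1N\sum_{v\in B_u}e^{2\pi i\langle v,x_2\rangle}.$$
Because $R^T$ is block upper triangular, the second coordinate of $(R^T)^{-j}(\xi_1,\xi_2)$ is $(R_2^T)^{-j}\xi_2$. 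Restricting $\widehat\mu=\prod_{j\geq 1}m_B\circ(R^T)^{-j}$ to the affine sheet $\br^r\times\{y_0\}$ --- which lies in $\mathcal Z$, hence in the zero set of $\widehat\mu$ --- gives an infinite product of entire functions of $\xi_1\in\bc^r$, locally uniformly convergent with factors tending to $1$, that vanishes identically; by a Baire-category argument on the connected space $\bc^r$, some single factor $m_B((R^T)^{-j_0}(\cdot,y_0))$ vanishes identically, and linear independence of the characters $e^{2\pi i\langle u,\cdot\rangle}$ forces $P_u(x_2^*)=0$ for every $u\in\pi_1(B)$, where $x_2^*:=(R_2^T)^{-j_0}y_0$. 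In particular every fiber $B_u$ has at least two elements.

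The crucial step is to upgrade this single common zero of the fiber masks to a genuine lattice structure. One vanishing point is far from enough (for instance $\{0,1,2\}\subset\bz$ has $\sum_v e^{2\pi i v/3}=0$ but is no union of proper cosets), so here I would use the invariance of $\mathcal Z$ in the sense of Conze et al. \cite{CCR} together with the cyclic transition structure of $\tilde{\mathcal S}$: propagating the vanishing condition around the $m$ levels and over all $\bz^d$-translates of the sheets shows that the common zero set $\mathcal Z_2:=\{x_2:P_u(x_2)=0\ \text{for all}\ u\in\pi_1(B)\}$ is $\bz^{d-r}$-periodic and stable under the relevant branches of the $R_2^T$-dynamics, hence --- by the structure theory of such invariant sets now applied in the $(d-r)$-dimensional factor --- is a finite union of rational cosets. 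Reading this back through the $P_u$ yields a proper sublattice $\Lambda_2\subsetneq\bz^{d-r}$ with $R_2\Lambda_2\subseteq\Lambda_2$ such that, after one translation inside each fiber, every $B_u$ lies in a single coset of $\Lambda_2$ and represents every class of $\bz^{d-r}/R_2\bz^{d-r}$ exactly once. Writing $\Lambda_2=Q\bz^{d-r}$ with $Q$ integral, the relation $R_2\Lambda_2\subseteq\Lambda_2$ becomes $R_2 Q=Q\widetilde{R_2}$ with $\widetilde{R_2}$ integral, and $|\det Q|=[\bz^{d-r}:\Lambda_2]\geq 2$ because $y_0\notin\bz^{d-r}$; this is exactly \eqref{eq1.18.1}. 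It follows that all fibers have $|\det R_2|$ elements, so $\#\pi_1(B)=N_1$, and since $B$ is a simple digit set for $R$ a short divisibility argument (vary $j$ within a fiber to hit the $0$-class mod $R_2$) shows $\pi_1(B)$ is a complete residue system modulo $R_1$.

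For the ``moreover'' part, note first that replacing any $\ell\in L_0$ by a congruent $\ell+R^T k$ keeps $(R,B,L_0)$ a Hadamard triple, since $e^{2\pi i\langle R^{-1}b,\ell+R^T k\rangle}=e^{2\pi i\langle R^{-1}b,\ell\rangle}$; so it suffices to choose representatives so that the resulting $L$ literally splits. The building block is the elementary fact that if $D$ is a complete set of residues modulo an integer matrix $S$ and $E$ a complete set of residues modulo $S^T$, then $(S,D,E)$ is a Hadamard triple (the corresponding Dirac comb is, up to normalization, the Haar measure of a finite abelian group and $E$ labels its characters). On the $\br^{d-r}$-side, using $R_2 Q=Q\widetilde{R_2}$ and translation-invariance of the Hadamard property, any $L_2$ with $Q^T L_2$ a complete set of residues modulo $\widetilde{R_2}^T$ makes $(R_2,B_2(b_1),L_2)$ Hadamard for every $b_1\in\pi_1(B)$ simultaneously, because each $\{c_{i,j}\}_j$ is then a complete set of residues modulo $\widetilde{R_2}$. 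On the $\br^r$-side, $\pi_1(B)$ being a complete set of residues modulo $R_1$, any complete set of residues $L_1$ modulo $R_1^T$ makes $(R_1,\pi_1(B),L_1)$ Hadamard. It then remains to assemble $L$ with $\pi_2(L)=L_2$ and first-coordinate fibers $L_1(\ell_2)$ equal to translates of $L_1$ chosen to absorb the off-diagonal block $C$ appearing in $\langle R^{-1}b,\ell\rangle$, to verify that the resulting Hadamard matrix factors as a product of the two reduced unitary matrices (hence is unitary and has $N$ rows), and to check $L\equiv L_0\pmod{R^T\bz^d}$ --- the last point because unitarity of the original Hadamard matrix together with the quasi-product form of $B$ already forces the residue classes of $L_0$ modulo $R^T\bz^d$ to have this dual structure, so $L$ merely selects good representatives. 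I expect the main obstacle to be the upgrade step above: turning the single common zero $x_2^*$ into the sublattice $Q\bz^{d-r}$ valid simultaneously for every fiber $B_u$, which is precisely where the dynamical invariance of $\mathcal Z$ and the Conze et al. machinery of \cite{CCR} are indispensable; the lattice bookkeeping and the construction/unitarity check for $L$ are comparatively routine, though extracting the dual structure of $L_0$ rather than postulating it still requires using unitarity with some care.
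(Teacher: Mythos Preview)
The paper does not actually prove Theorem~\ref{th_quasi}; this is a survey, and the theorem is quoted from \cite{DHL15} without argument. So there is no ``paper's proof'' to compare against, and your proposal has to be judged on its own merits.

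Your outline has the right architecture---use Proposition~\ref{pr1.14} to get the invariant sheets, restrict $\widehat\mu$ to $\br^r\times\{y_0\}$, kill one factor by Baire, separate characters to get $P_u(x_2^*)=0$ for all $u$---but the step you yourself flag as the ``main obstacle'' is not a detail, it is essentially the entire theorem. You obtain a \emph{single} common zero $x_2^*=(R_2^T)^{-j_0}y_0$ of all fiber masks $P_u$, at an uncontrolled depth $j_0$, and then assert that the invariance of $\mathcal Z$ together with \cite{CCR} upgrades this to a full $R_2$-invariant proper sublattice $Q\bz^{d-r}$, uniform over all fibers, with each $B_u$ a complete set of representatives modulo $R_2$. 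None of that is argued: why does the common-zero set $\mathcal Z_2$ inherit invariance under the $(d-r)$-dimensional dynamics (the transitions in Proposition~\ref{pr1.14} are for the full $d$-dimensional system, and the off-diagonal block $C$ mixes coordinates), why is $\mathcal Z_2$ nontrivially $\bz^{d-r}$-periodic rather than merely containing one orbit, and---most importantly---why does a lattice of common zeros force each $B_u$ to be a full residue system modulo $R_2$ rather than, say, a proper subset in one fiber and a superset in another? Until this is written out, the quasi-product form is not established.

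There is also a genuine error in the ``moreover'' part. You claim that $\pi_1(B)$ is a complete set of representatives modulo $R_1$, and then base your construction of $L_1$ on the ``complete residues give a Hadamard triple'' principle. But Hadamard triples need not have $N=|\det R|$ (the Jorgensen--Pedersen example has $R=4$, $B=\{0,2\}$), and nothing in the hypotheses forces $N_1=|\det R_1|$; the theorem only asserts $N_1=N/|\det R_2|$. So the argument that $(R_1,\pi_1(B),L_1)$ is Hadamard because both sides are complete residue systems collapses in general, and with it your construction of $L$. The correct route has to extract the first-coordinate Hadamard structure from the original unitarity of the $(R,B,L_0)$ matrix combined with the quasi-product form of $B$, not from a counting coincidence.
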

Here $\pi_1,\pi_2$ are the projections onto the first and second components in $\br^d=\br^r\times\br^{d-r}$, and for $b_1\in\pi_1(B)$, $B_2(b_1):=\{b_2 : (b_1,b_2)\in B\}$ and
for $l_2\in\pi_2(L)$, $L_1(l_2):=\{l_1: (l_1,l_2)\in L\}$.

\medskip

In Example \ref{example5.3}, it is easy to see that the digit set is in a quasi-product form with $Q = 3$. Suppose now the pair $(R,B)$ is in the quasi-product form
\begin{equation}\label{R_4.1}R=\begin{bmatrix}
R_1&0\\
C&R_2
\end{bmatrix}\end{equation}
\begin{equation}
B=\left\{(u_i,d_{i,j})^T : 1\leq i\leq N_1, 1\leq j\leq N_2:=|\det R_2|\right\},
\label{eq1.23.1}
\end{equation}
and $\{d_{i,j}:1\leq j\leq N_2\}$ ($d_{i,j} = v_i +Qc_{i,j}$ as in Theorem \ref{th_quasi}) is a complete set of representatives $(\mod R_2\bz^{d-r})$.
We will show that the measure $\mu=\mu(R,B)$ has a quasi-product structure.

\medskip

Note that we have
$$R^{-1}=\begin{bmatrix}
R_1^{-1}&0\\
-R_2^{-1}CR_1^{-1}&R_2^{-1}
\end{bmatrix}$$
and, by induction,
$$R^{-k}= \begin{bmatrix}
R_1^{-k}&0\\
D_k&R_2^{-k}
\end{bmatrix},\mbox{ where }D_k:=-\sum_{l=0}^{k-1}R_2^{-(l+1)}CR_1^{-(k-l)}.$$
 For the invariant set $T(R,B)$, we can express it as a set of infinite sums,
 $$
 T(R,B)=\left\{\sum_{k=1}^\infty R^{-k}b_k : b_k\in B\right\}.
 $$
Therefore any element $(x,y)^T\in T(R,B)$ can be written in the following form
$$x=\sum_{k=1}^\infty R_1^{-k}a_{i_k}, \quad y=\sum_{k=1}^\infty D_ka_{i_k}+\sum_{k=1}^\infty R_2^{-k}d_{i_k,j_k}.$$
Let $X_1$ be the attractor (in $\br^r)$ associated to the IFS defined by the pair $(R_1,\pi_1(B)=\{u_i :1\leq i\leq N_1\})$ (i.e. $X_1=T(R_1,\pi_1(B))$). Let $\mu_1$ be the (equal-weight) invariant measure associated to this pair.

\medskip

For each sequence $\omega=(i_1i_2\dots)\in\{1,\dots,N_1\}^{\bn} = \{1,\dots, N_1\}\times\{1,\dots, N_1\}\times...$, define
\begin{equation}\label{eqxomega}
x(\omega)=\sum_{k=1}^\infty R_1^{-k}u_{i_k}.
\end{equation}
 As $(R_1,\pi_1(B))$ forms Hadamard triple with some $L_1(\ell_2)$, the measure $\mu(R_1,\pi_1(B))$ has the no-overlap property. It implies that for $\mu_1$-a.e. $x\in X_1$, there is a unique $\omega$ such that $x(\omega)=x$. We define this as $\omega(x)$. This establishes a bijective correspondence, up to measure zero, between the set $\Omega_1:=\{1,\dots,N_1\}^{\bn}$ and $X_1$. The measure $\mu_1$ on $X_1$ is the pull-back of the product measure which assigns equal probabilities $\frac1{N_1}$ to each digit.

\medskip

For $\omega=(i_1i_2\dots)$ in $\Omega_1$, define
$$\Omega_2(\omega):=\{(d_{i_1,j_1}d_{i_2,j_2}\dots d_{i_n,j_n}\dots) : j_k\in \{1,\dots,N_2\}\mbox{ for all }k\in\bn\}.$$
For $\omega\in\Omega_1$, define $g(\omega):=\sum_{k=1}^\infty D_ka_{i_k}$ and $g(x):=g(\omega(x))$, for $x\in X_1$.  Also $\Omega_2(x):=\Omega_2(\omega(x))$.
For $x\in X_1$, define
$$X_2(x):=X_2(\omega(x)):=\left\{\sum_{k=1}^\infty R_2^{-k}d_{i_k,j_k}: j_k\in\{1,\dots,N_2\}\mbox{ for all }k\in\bn\right\}.$$
Note that the attractor $T(R,B)$ has the following form

$$T(R,B)=\{(x,g(x)+y)^T: x\in X_1,y\in X_2(x)\}.$$

\medskip

For $\omega\in \Omega_1$,  consider the product probability measure $\mu_\omega$, on $\Omega_2(\omega)$, which assigns equal probabilities $\frac{1}{N_2}$ to each digit $d_{i_k,j_k}$ at level $k$.
Next, we define the measure $\mu_\omega^2$ on $X_2(\omega)$. Let
$r_\omega:\Omega_2(\omega)\rightarrow X_2(\omega)$,
$$r_\omega(d_{i_1,j_1}d_{i_2,j_2}\dots)=\sum_{k=1}^\infty R_2^{-k}d_{i_k,j_k}.$$
Define $\mu_x^2:=\mu_{\omega(x)}^2:=\mu_{\omega(x)}\circ r_{\omega(x)}^{-1}$.

\medskip

Note that the measure $\mu_x^2$ is the infinite convolution product $\delta_{R_2^{-1}B_2(i_1)}\ast\delta_{R_2^{-2}B_2(i_2)}\ast\dots$, where $\omega(x)=(i_1i_2\dots)$, $B_2(i_k):=\{d_{i_k,j} : 1\leq j\leq N_2\}$ and $\delta_A:=\frac{1}{\#A}\sum_{a\in A}\delta_a$, for a subset $A$ of $\br^{d-r}$.

The following lemmas were proved in \cite{DJ07d}.

\begin{lemma}\label{lem1.23}\cite[Lemma 4.4]{DJ07d}
For any bounded Borel functions on $\br^d$,
$$\int_{T(R,B)}f\,d\mu=\int_{X_1}\int_{X_2(x)}f(x,y+g(x))\,d\mu_x^2(y)\,d\mu_1(x).$$
\end{lemma}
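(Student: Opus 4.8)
The plan is to recognize the right‑hand side as a Borel probability measure on $T(R,B)$ and to show it satisfies the very same invariance equation \eqref{self-affine} as $\mu$; uniqueness of the invariant measure then forces the two to coincide. Write, for bounded Borel $f$ on $\br^d$,
\[
\nu(f):=\int_{X_1}\int_{X_2(x)}f(x,y+g(x))\,d\mu_x^2(y)\,d\mu_1(x).
\]
Since two finite Borel measures on the compact set $T(R,B)$ that integrate all continuous functions identically are equal, it suffices to prove $\nu(f)=\int f\,d\mu$ for $f\in C_c(\br^d)$ (or, by Stone--Weierstrass, just for $f(x,z)=f_1(x)f_2(z)$), so I would first reduce to that case.

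Next I would check that $\nu$ is a well‑defined probability measure carried by $T(R,B)$. The only delicate point is the measurability of $x\mapsto\int_{X_2(x)}f(x,y+g(x))\,d\mu_x^2(y)$. Because $(R_1,\pi_1(B))$ satisfies the no‑overlap condition, the coding map $\omega\mapsto x(\omega)$ of \eqref{eqxomega} is a Borel bijection between $\Omega_1$ and $X_1$ off a $\mu_1$‑null set, with Borel inverse $x\mapsto\omega(x)$; hence $x\mapsto g(x)=\sum_k D_ku_{i_k}$, $x\mapsto X_2(x)$, and $x\mapsto\mu_x^2=\delta_{R_2^{-1}B_2(i_1)}\ast\delta_{R_2^{-2}B_2(i_2)}\ast\cdots$ all depend measurably on $x$, so the inner integral is a bounded Borel function of $x$. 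The total mass is $1$ since $\mu_1$ and each $\mu_x^2$ are probabilities, and $\operatorname{supp}\nu\subset T(R,B)$ follows from the description $T(R,B)=\{(x,g(x)+y)^T:x\in X_1,\ y\in X_2(x)\}$.

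The heart of the argument is verifying $\nu=\tfrac1N\sum_{b\in B}\nu\circ\tau_b^{-1}$. From $R^{-1}=\left[\begin{smallmatrix}R_1^{-1}&0\\ D_1&R_2^{-1}\end{smallmatrix}\right]$ with $D_1=-R_2^{-1}CR_1^{-1}$, a digit $b=(u_i,d_{i,j})^T\in B$ acts by $\tau_b(x,z)^T=\bigl(\tau^{(1)}_{u_i}(x),\,D_1(x+u_i)+R_2^{-1}(z+d_{i,j})\bigr)^T$ with $\tau^{(1)}_{u_i}(x)=R_1^{-1}(x+u_i)$. Using the recursion $D_{k+1}=R_2^{-1}D_k-R_2^{-1}CR_1^{-(k+1)}$ (immediate from the definition of $D_k$) one gets the cocycle identity
\[
g\bigl(\tau^{(1)}_{u_i}(x)\bigr)=D_1(x+u_i)+R_2^{-1}g(x),
\]
so that $\tau_b\bigl(x,g(x)+y\bigr)^T=\bigl(\tau^{(1)}_{u_i}(x),\,g(\tau^{(1)}_{u_i}(x))+R_2^{-1}(y+d_{i,j})\bigr)^T$; and prepending the digit $d_{i,j}$ to a coding sequence gives the one‑step self‑similarity of the fibre data,
\[
X_2\bigl(\tau^{(1)}_{u_i}(x)\bigr)=\bigcup_{j=1}^{N_2}\bigl(R_2^{-1}d_{i,j}+R_2^{-1}X_2(x)\bigr),\qquad
\int h\,d\mu_{\tau^{(1)}_{u_i}(x)}^2=\frac1{N_2}\sum_{j=1}^{N_2}\int h\bigl(R_2^{-1}y+R_2^{-1}d_{i,j}\bigr)\,d\mu_x^2(y).
\]
Substituting $\tau_b$ into $\nu(f\circ\tau_b)$, summing over $j$ (which reassembles $\mu_{\tau^{(1)}_{u_i}(x)}^2$ and, thanks to the cocycle identity, the $g$‑shift on the fibre over $\tau^{(1)}_{u_i}(x)$), and then averaging over $i$ while invoking the invariance $\mu_1=\tfrac1{N_1}\sum_i\mu_1\circ(\tau^{(1)}_{u_i})^{-1}$, collapses $\tfrac1N\sum_{b}\nu(f\circ\tau_b)$ back to $\nu(f)=\int_{X_1}\bigl(\int_{X_2(x)}f(x,g(x)+w)\,d\mu_x^2(w)\bigr)d\mu_1(x)$. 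Thus $\nu$ is an invariant probability measure for the IFS $(R,B)$, hence $\nu=\mu$, which is the claimed formula.

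I expect the main obstacle to be the bookkeeping in the last step: keeping the fibre $X_2(x)$, the fibre measure $\mu_x^2$, and the additive cocycle $g(x)$ consistently aligned under the shift $x\mapsto\tau^{(1)}_{u_i}(x)$, and handling the attendant measurability of all these objects on the full‑measure set where the no‑overlap coding is bijective. An alternative, slightly more analytic route is to approximate both sides by level‑$n$ truncations — $\mu$ by $\delta_{R^{-1}B}\ast\cdots\ast\delta_{R^{-n}B}$ and $\nu$ by the corresponding finite convolutions in the $x$‑ and $y$‑variables — and pass to the weak‑$*$ limit; but the uniqueness argument above is cleaner and is the one I would write up.
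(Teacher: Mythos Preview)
Your argument is correct. The paper does not actually prove this lemma; it simply quotes it from \cite[Lemma 4.4]{DJ07d}, so there is no in-paper proof to compare against directly. Your route---defining $\nu$ by the right-hand side and verifying that it satisfies the same invariance identity \eqref{self-affine} as $\mu$, then invoking uniqueness of the self-affine measure---is the natural one, and every step checks out: the recursion $D_{k+1}=R_2^{-1}D_k-R_2^{-1}CR_1^{-(k+1)}$ does give the cocycle identity $g(\tau^{(1)}_{u_i}(x))=D_1(x+u_i)+R_2^{-1}g(x)$; prepending the symbol $i$ to $\omega(x)$ yields exactly $\mu^2_{\tau^{(1)}_{u_i}(x)}=\delta_{R_2^{-1}B_2(i)}\ast(\mu_x^2\circ R_2)$ as you use; and the final collapse via the invariance of $\mu_1$ is clean. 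The measurability issues you flag are real but routine once the no-overlap coding is in place. The finite-level approximation you mention as an alternative is in fact closer in spirit to how \cite{DJ07d} organizes things, but your uniqueness argument is tighter and would be perfectly acceptable to write up.
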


\begin{lemma}\label{lem1.24}\cite[Lemma 4.5]{DJ07d}
If $\Lambda_1$ is a spectrum for the measure $\mu_1$, then
$$F(y):=\sum_{\lambda_1\in\Lambda_1}|\widehat\mu(x+\lambda_1,y)|^2=\int_{X_1}|\widehat\mu_s^2(y)|^2\,d\mu_1(s),\quad(x\in\br^r,y\in\br^{d-r}).$$
\end{lemma}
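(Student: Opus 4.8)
The plan is to recognize $\widehat\mu(x+\lambda_1,y)$, for fixed $x\in\br^r$, $y\in\br^{d-r}$, as the Fourier coefficient of one explicit function in $L^2(\mu_1)$ against the exponential $e_{\lambda_1}$, and then apply Parseval's identity for the orthonormal basis $\{e_{\lambda_1}\}_{\lambda_1\in\Lambda_1}$.

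First I would feed the exponential into the disintegration formula of Lemma \ref{lem1.23}. Writing a point of $\br^d$ as $(s,t)\in\br^r\times\br^{d-r}$ and a frequency as $(\xi_1,\xi_2)$, apply Lemma \ref{lem1.23} to the bounded continuous function $f(s,t)=e^{-2\pi i(\ip{\xi_1}{s}+\ip{\xi_2}{t})}$; carrying out the inner integral over $X_2(s)$ first gives
$$\widehat\mu(\xi_1,\xi_2)=\int_{X_1}e^{-2\pi i\ip{\xi_1}{s}}\,e^{-2\pi i\ip{\xi_2}{g(s)}}\,\widehat\mu_s^2(\xi_2)\,d\mu_1(s).$$
Specializing to $\xi_1=x+\lambda_1$ and $\xi_2=y$ and setting $h_{x,y}(s):=e^{-2\pi i\ip{x}{s}}\,e^{-2\pi i\ip{y}{g(s)}}\,\widehat\mu_s^2(y)$, this says exactly
$$\widehat\mu(x+\lambda_1,y)=\int_{X_1}h_{x,y}(s)\,e^{-2\pi i\ip{\lambda_1}{s}}\,d\mu_1(s)=\langle h_{x,y},e_{\lambda_1}\rangle_{L^2(\mu_1)}.$$
Then I would observe that $h_{x,y}\in L^2(\mu_1)$: the map $s\mapsto\widehat\mu_s^2(y)$ is measurable (it is built measurably from $\omega(s)$) and bounded by $1$ since each $\mu_s^2$ is a probability measure, while the two exponential factors are unimodular because $g$ is $\br^{d-r}$-valued and $x,y$ are real; hence $|h_{x,y}|\le1$ and $\mu_1$ is a finite measure. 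Since $\Lambda_1$ is a spectrum for $\mu_1$, the family $\{e_{\lambda_1}\}_{\lambda_1\in\Lambda_1}$ is an orthonormal basis of $L^2(\mu_1)$, so Parseval gives
$$F(y)=\sum_{\lambda_1\in\Lambda_1}\bigl|\widehat\mu(x+\lambda_1,y)\bigr|^2=\sum_{\lambda_1\in\Lambda_1}\bigl|\langle h_{x,y},e_{\lambda_1}\rangle_{L^2(\mu_1)}\bigr|^2=\|h_{x,y}\|_{L^2(\mu_1)}^2=\int_{X_1}|h_{x,y}(s)|^2\,d\mu_1(s),$$
and since $|h_{x,y}(s)|^2=|\widehat\mu_s^2(y)|^2$ (the unimodular factors drop out) this is $\int_{X_1}|\widehat\mu_s^2(y)|^2\,d\mu_1(s)$; in particular the left-hand side does not depend on $x$, which is the content of the statement.

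The only place requiring care — and the main obstacle worth writing out in full — is the justification of the disintegration/Fubini step: one must check that $(s,t)\mapsto$ the relevant integrand is jointly measurable so that Lemma \ref{lem1.23} legitimately applies to $f$, and, relatedly, that $s\mapsto\widehat\mu_s^2(y)$ is a genuine measurable (indeed bounded) function on $X_1$ so that Parseval may be invoked. Once these measurability points are nailed down the rest is a one-line application of the Parseval identity, and no convergence issue arises since the partial sums are dominated by $\|h_{x,y}\|_{L^2(\mu_1)}^2\le1$.
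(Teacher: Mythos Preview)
Your argument is correct and is the natural one: feed the exponential into the disintegration of Lemma~\ref{lem1.23}, recognize $\widehat\mu(x+\lambda_1,y)$ as $\langle h_{x,y},e_{\lambda_1}\rangle_{L^2(\mu_1)}$ for a bounded measurable $h_{x,y}$, and invoke Parseval. The paper does not supply its own proof here but simply cites \cite[Lemma~4.5]{DJ07d}, where precisely this computation is carried out; so your proposal matches the intended argument.
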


The two lemmas lead to the following proposition.

\begin{proposition}\cite{DHL15}\label{lem1.25}
For the quasi-product form given in (\ref{R_4.1}) and (\ref{eq1.23.1}), there exists a lattice $\Gamma_2$ such that for $\mu_1$-almost every $x\in X_1$, the set $\Gamma_2$ is a spectrum for the measure $\mu_x^2$.
\end{proposition}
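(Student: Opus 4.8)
The plan is to produce $\Gamma_2$ as the dual lattice of a fixed lattice $\Gamma_2^\ast$ for which, for $\mu_1$-almost every $x$, the fiber measure $\mu_x^2$ is a normalized Lebesgue measure on a measurable fundamental domain of $\mathbb R^{d-r}/\Gamma_2^\ast$. Take $\Gamma_2^\ast:=\mathbb Z[R_2,\bigcup_{i=1}^{N_1}B_2(i)]$, the smallest $R_2$-invariant lattice containing all the digit sets $B_2(i)=\{d_{i,j}:1\le j\le N_2\}$ (expansiveness of $R_2$ forces $\Gamma_2^\ast$ to be full rank). By the quasi-product form \eqref{eq1.23.1} each $B_2(i)$ is a complete set of coset representatives modulo $R_2\mathbb Z^{d-r}$; since $B_2(i)\subset\Gamma_2^\ast$, since $[\Gamma_2^\ast:R_2\Gamma_2^\ast]=|\det R_2|=N_2=\#B_2(i)$, and since elements distinct modulo $R_2\mathbb Z^{d-r}$ stay distinct modulo the finer lattice $R_2\Gamma_2^\ast$, each $B_2(i)$ is in fact a complete set of representatives for $\Gamma_2^\ast/R_2\Gamma_2^\ast$. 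Consequently, for every $\omega=(i_1i_2\cdots)$ and every $n$ the set $\Delta_n(\omega):=B_2(i_1)+R_2B_2(i_2)+\cdots+R_2^{n-1}B_2(i_n)$ is a complete set of representatives for $\Gamma_2^\ast/R_2^n\Gamma_2^\ast$, so $R_2^{-n}\Delta_n(\omega)$ is one for $R_2^{-n}\Gamma_2^\ast/\Gamma_2^\ast$ and is contained in the fixed compact set $K:=T(R_2,\bigcup_iB_2(i))$.

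I would then pass to the limit in the approximating measures $\nu_n^{\omega}=\frac{1}{N_2^n}\sum_{b\in\Delta_n(\omega)}\delta_{R_2^{-n}b}\rightharpoonup\mu_x^2$ (with $\omega=\omega(x)$). Since $\nu_n^{\omega}$ is the normalized counting measure on a transversal of $R_2^{-n}\Gamma_2^\ast/\Gamma_2^\ast$, its $\Gamma_2^\ast$-periodization $\sum_{\gamma\in\Gamma_2^\ast}\nu_n^\omega(\cdot+\gamma)$ is the uniform measure on $R_2^{-n}\Gamma_2^\ast$ with total mass $1$ per fundamental cell of $\Gamma_2^\ast$; because $R_2$ is expansive these periodizations converge to $\covol(\Gamma_2^\ast)^{-1}\,dy$, and no mass escapes in the limit because every measure in sight is supported in a fixed locally finite union of translates of $K$. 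This yields, for every $x$,
$$\sum_{\gamma\in\Gamma_2^\ast}\mu_x^2(\,\cdot+\gamma)=\covol(\Gamma_2^\ast)^{-1}\,dy ,$$
so $\mu_x^2=f_x\,dy$ with $0\le f_x\le\covol(\Gamma_2^\ast)^{-1}$ and $\sum_{\gamma\in\Gamma_2^\ast}f_x(\cdot+\gamma)=\covol(\Gamma_2^\ast)^{-1}$ a.e. Put $\Gamma_2:=(\Gamma_2^\ast)^{\circ}=\{\gamma\in\mathbb R^{d-r}:\langle\gamma,\gamma^\ast\rangle\in\mathbb Z\text{ for all }\gamma^\ast\in\Gamma_2^\ast\}$. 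Already this periodization identity gives, for $\gamma\in\Gamma_2$, that $\widehat{\mu_x^2}(\gamma)=\int_{\mathcal F}e^{-2\pi i\langle\gamma,y\rangle}\covol(\Gamma_2^\ast)^{-1}\,dy=\delta_{\gamma,0}$ (integrating over a fundamental domain $\mathcal F$ of $\Gamma_2^\ast$ and using $\Gamma_2^\ast$-periodicity of $y\mapsto e^{-2\pi i\langle\gamma,y\rangle}$); hence $E(\Gamma_2)=\{e^{2\pi i\langle\gamma,y\rangle}:\gamma\in\Gamma_2\}$ is an orthonormal set in $L^2(\mu_x^2)$ for \emph{every} $x$.

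What remains — and what I expect to be the main obstacle — is completeness, equivalently that for $\mu_1$-a.e.\ $x$ the density $f_x$ takes only the values $0$ and $\covol(\Gamma_2^\ast)^{-1}$, i.e.\ that the essential support $\Omega_x$ of $\mu_x^2$ is a genuine fundamental domain for $\mathbb R^{d-r}/\Gamma_2^\ast$ and not merely a covering of multiplicity $\text{Leb}(\Omega_x)/\covol(\Gamma_2^\ast)>1$. (That such multiplicity can really occur on a null set is visible already in rank one, e.g.\ for the constant word $\omega=(i,i,\cdots)$ with $\mathbb Z[R_2,B_2(i)]\subsetneq\Gamma_2^\ast$.) The extra ingredient is the no-overlap property: $B$ is a simple digit set for $R$ (preliminary reductions), hence $\mu(R,B)$ has no overlap, and through the disintegration of Lemma~\ref{lem1.23} this passes to no overlap of the fiber iterated function system $\{\,y\mapsto R_2^{-1}(y+d_{i_1,j})\,\}_{j}$ for $\mu_1$-a.e.\ $x$; that is, the sets $R_2^{-1}(\Omega_{\sigma x}+d_{i_1,j})$, $j=1,\dots,N_2$, are pairwise disjoint up to Lebesgue-null sets, $\sigma$ being the shift. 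Iterating this (legitimate for a.e.\ $x$ since it only discards a countable union of null sets) exhibits $\Omega_x=\bigsqcup_{b\in\Delta_n(\omega)}R_2^{-n}(\Omega_{\sigma^{n}x}+b)$ as a Lebesgue-a.e.\ disjoint union for every $n$, whence $x\mapsto\text{Leb}(\Omega_x)$ is $\sigma$-invariant and therefore $\mu_1$-a.e.\ equal to a constant $V_0$ by ergodicity of the Bernoulli shift on $\{1,\dots,N_1\}^{\mathbb N}$ (whose product measure pushes to $\mu_1$). The periodization identity forces $V_0\ge\covol(\Gamma_2^\ast)$, with equality precisely when $f_x\in\{0,\covol(\Gamma_2^\ast)^{-1}\}$ a.e.; and $V_0=\covol(\Gamma_2^\ast)$ follows by evaluating $\text{Leb}(\Omega_\omega)$ on one periodic word $\omega$ using every letter of $\{1,\dots,N_1\}$, for which $\Omega_\omega$ is an integral self-affine tile for $\Gamma_2^\ast$ in the sense of Lagarias--Wang. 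Thus for $\mu_1$-a.e.\ $x$ one has $\mu_x^2=\covol(\Gamma_2^\ast)^{-1}\chi_{\Omega_x}\,dy$ with $\Omega_x$ a fundamental domain of $\mathbb R^{d-r}/\Gamma_2^\ast$, and then $E(\Gamma_2)$, orthonormal by the previous paragraph and total because every $L^2$ function on a fundamental domain has a convergent $\Gamma_2$-Fourier expansion, is an orthonormal basis of $L^2(\mu_x^2)$ — which is the assertion, with the lattice $\Gamma_2$ independent of $x$.
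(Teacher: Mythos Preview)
The paper does not actually prove this proposition; it merely cites \cite{DHL15} and states that the two preceding lemmas lead to it. So there is no ``paper's proof'' to compare against line by line, and your proposal has to be judged on its own merits.

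Your periodization argument is correct and elegant: once you know that each $B_2(i)$ is a full set of representatives for $\Gamma_2^\ast/R_2\Gamma_2^\ast$, the identity $\sum_{\gamma\in\Gamma_2^\ast}\mu_x^2(\cdot+\gamma)=\covol(\Gamma_2^\ast)^{-1}\,dy$ follows, absolute continuity of $\mu_x^2$ drops out, and orthonormality of $E(\Gamma_2)$ in $L^2(\mu_x^2)$ is immediate for \emph{every} $x$. The reduction of completeness to ``$f_x$ is $\{0,\covol(\Gamma_2^\ast)^{-1}\}$-valued'' is also the right reformulation.

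The gap is in your last step. You show, via ergodicity of the Bernoulli shift, that $x\mapsto\mathrm{Leb}(\Omega_x)$ is $\mu_1$-a.e.\ equal to a constant $V_0$, and you know $V_0\ge\covol(\Gamma_2^\ast)$. To pin down $V_0$ you evaluate $\mathrm{Leb}(\Omega_\omega)$ at a single periodic word $\omega$. But a periodic word has $\mu_1$-measure zero, and you have provided no continuity or semicontinuity of $x\mapsto\mathrm{Leb}(\Omega_x)$ (Lebesgue measure is \emph{not} continuous for the Hausdorff metric on compacta), so there is no link between the generic value $V_0$ and the value at your chosen $\omega$. Indeed, you yourself note that on the null set of constant words the value can be strictly larger than $\covol(\Gamma_2^\ast)$; nothing you have written rules out the opposite phenomenon on the full-measure set. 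A second, smaller issue: even for the periodic word $(1,2,\dots,N_1)^\infty$, the Lagarias--Wang conclusion $\mathrm{Leb}(\Omega_\omega)=\covol(\Gamma_2^\ast)$ requires $\bz[R_2^{N_1},D]=\Gamma_2^\ast$ for $D=B_2(1)+R_2B_2(2)+\cdots+R_2^{N_1-1}B_2(N_1)$, and this is not automatic from $\Gamma_2^\ast=\bz[R_2,\bigcup_iB_2(i)]$ without knowing, for instance, that $0\in B_2(i)$ for every $i$ (which is not assumed).

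To close the gap you need an argument that reaches the \emph{generic} fibre directly. One route is to exploit the extra structure recorded in Theorem~\ref{th_quasi}: the digits factor as $d_{i,j}=v_i+Qc_{i,j}$ with $R_2Q=Q\tilde R_2$, and the common set $\pi_2(L)$ makes $(R_2,B_2(i),\pi_2(L))$ a Hadamard triple for \emph{every} $i$. This common $L$-set lets one run a transfer-operator/Parseval argument uniformly in the fibre variable, bypassing the pointwise evaluation of $V_0$ altogether. Your lattice $\Gamma_2^\ast$ is then identified through $Q$ rather than through an ergodic average.
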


Finally, the proof of the Theorem \ref{th1} follows by induction on the dimension $d$: we know it is true for $d=1$ from \cite{DJ06}. Then assume it is true for dimensions up to $d-1$. The case $\mathcal Z=\ty$ was treated before; if $\mathcal Z\neq\ty$, then the measure $\mu$ is in the quasi-product described above. With Proposition \ref{lem1.25}, and using induction, the measure $\mu_1$ has a spectrum $\Gamma_1$ and then the measure $\mu$ has the spectrum $\Gamma_1\times\Gamma_2$.

\medskip
\section{Non-spectral singular measures with Fourier frames}

Suppose that instead of the Hadamard triple, we are given the almost-Parseval-frame tower in Definition \ref{APFT}. A similar approach in Proposition \ref{prop_main} (for details see \cite{LaiW})  allows us to prove the following:

\begin{proposition}
Suppose that $\{(N_j,B_j)\}$ is an {\it almost-Parseval-frame tower} associated to $\{\epsilon_j\}$ with $\{L_j\}$ as its pre-spectrum. Let $
\Lambda = \bigcup_{n=1}^{\infty}\Lambda_n $ where $\Lambda_n = L_1+N_1L_2+...+(N_1...N_{n-1})L_n$ and let $\mu$ be the measure defined in (\ref{measure}). Assume that
$$
\delta(\Lambda): = \inf_{n\ge1}\inf_{\lambda\in\Lambda_n}|\widehat{\mu_{>n}}(\lambda)|^2>0
$$
Then $\Lambda$ is a frame spectrum for $L^2(\mu)$ and for any $f\in L^2(\mu)$,
\begin{equation}\label{eq3.1i}
\delta(\Lambda)\prod_{j=1}^{\infty}(1-\epsilon_j)^2\|f\|^2\leq \sum_{\lambda\in\Lambda}\left|\int f(x)e^{-2\pi i \langle\lambda,x\rangle}d\mu(x)\right|^2\leq \prod_{j=1}^{\infty}(1+\epsilon_j)^2\|f\|^2.
\end{equation}
\end{proposition}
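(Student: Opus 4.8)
The plan is to mirror the proof of Proposition \ref{prop_main} step by step, replacing the exact unitarity of the Hadamard matrices $H_n$ by the two-sided frame estimate \eqref{ALmost1} coming from the almost-Parseval-frame tower. First I would fix a dense subspace of $L^2(\mu)$ on which to verify the frame inequality. Exactly as before, the natural choice is the union $\mathcal S = \bigcup_n \mathcal S_n$, where $\mathcal S_n$ consists of the $n$-th level step functions that are constant on the cylinder sets determined by $B_n := B_1 + N_1 B_2 + \dots + (N_1\cdots N_{n-1})B_n$ (with the appropriate normalization by $N_1\cdots N_n$); density follows from the martingale/Lebesgue differentiation argument, since $\mu = \mu_n \ast \mu_{>n}$ and the cylinder sets shrink to points.

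Next I would carry out the two computations \eqref{eq3.1}--\eqref{eq3.3} in this setting. For $f = \sum_{{\bf b}\in B_n} w_{\bf b}\,{\bf 1}_{\tau_{\bf b}(\cdot)}\in\mathcal S_n$ we get $\int |f|^2\,d\mu = \frac{1}{M_1\cdots M_n}\|{\bf w}\|^2$ (where $M_j = \#B_j$), and
$$
\int f(x)e^{-2\pi i\lambda x}\,d\mu(x) = \frac{1}{M_1\cdots M_n}\,\widehat{\mu_{>n}}(\lambda)\sum_{{\bf b}\in B_n} w_{\bf b}\,e^{-2\pi i {\bf b}\lambda/(N_1\cdots N_n)}.
$$
Summing the squares over $\lambda\in\Lambda_n = L_1 + N_1 L_2 + \dots + (N_1\cdots N_{n-1})L_n$ and using $\delta(\Lambda)\le |\widehat{\mu_{>n}}(\lambda)|^2\le 1$, the middle sum is pinched between $\delta(\Lambda)\frac{1}{M_1\cdots M_n}\|\mathcal G_n {\bf w}\|^2$ and $\frac{1}{M_1\cdots M_n}\|\mathcal G_n{\bf w}\|^2$, where $\mathcal G_n$ is the tensor product $\mathcal F_1\otimes\mathcal F_2\otimes\cdots\otimes\mathcal F_n$ of the frame matrices from Definition \ref{APFT}. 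The key algebraic point is that $\Lambda_n$ and $B_n$ factor through the mixed-radix digit expansions, so the big matrix $\big(\frac{1}{\sqrt{M_1\cdots M_n}} e^{-2\pi i {\bf b}\lambda/(N_1\cdots N_n)}\big)_{\lambda\in\Lambda_n,{\bf b}\in B_n}$ is precisely $\mathcal F_n\otimes\cdots\otimes\mathcal F_1$; then \eqref{ALmost1} and submultiplicativity of operator norms under tensor products give
$$
\prod_{j=1}^n (1-\epsilon_j)\,\|{\bf w}\| \le \|\mathcal G_n {\bf w}\| \le \prod_{j=1}^n (1+\epsilon_j)\,\|{\bf w}\|.
$$
Combining, for $f\in\mathcal S_n$,
$$
\delta(\Lambda)\prod_{j=1}^n(1-\epsilon_j)^2\int|f|^2\,d\mu \le \sum_{\lambda\in\Lambda_n}\left|\int f e^{-2\pi i\lambda x}\,d\mu\right|^2 \le \prod_{j=1}^n(1+\epsilon_j)^2\int|f|^2\,d\mu.
$$

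Finally I would pass from $\Lambda_n$ to $\Lambda$ and from $\mathcal S_n$ to all of $L^2(\mu)$. Since $\mathcal S_n\subset\mathcal S_m$ for $n<m$ and the $\Lambda_m$ are increasing, for fixed $f\in\mathcal S_n$ one applies the $m$-level bound and lets $m\to\infty$; because $\sum_j\epsilon_j<\infty$, the products $\prod_{j=1}^m(1\mp\epsilon_j)$ converge to nonzero limits $\prod_{j=1}^\infty(1\mp\epsilon_j)$, yielding \eqref{eq3.1i} for every $f\in\mathcal S$. A standard density and limiting argument (both sides are continuous in $f$, using the upper Bessel bound to control the tail of the series uniformly) then extends \eqref{eq3.1i} to all $f\in L^2(\mu)$, and the strict positivity of the lower bound forces completeness, so $E(\Lambda)$ is a frame. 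I expect the main obstacle to be purely bookkeeping: carefully verifying that the mixed-radix digit structure of $B_n$ and $\Lambda_n$ makes the level-$n$ exponential matrix literally equal to the tensor product $\mathcal F_n\otimes\cdots\otimes\mathcal F_1$ (so that \eqref{ALmost1} can be applied factor by factor), and confirming that the upper frame bound is summable/uniform enough to justify interchanging the limit with the infinite sum over $\lambda$ in the density step.
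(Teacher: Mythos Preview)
Your outline follows exactly the route the paper indicates (``a similar approach in Proposition~\ref{prop_main}, for details see \cite{LaiW}''), and all the structural steps are right. There is, however, one genuine error: the level-$n$ exponential matrix is \emph{not} the tensor product $\mathcal F_n\otimes\cdots\otimes\mathcal F_1$. Write $\lambda=\lambda'+(N_1\cdots N_{n-1})\ell_n$ with $\lambda'\in\Lambda_{n-1}$, $\ell_n\in L_n$, and let $x=x'+b_n/(N_1\cdots N_n)$ be the atom of $\mu_n$ indexed by $(b_1,\dots,b_n)$, where $x'$ is the level-$(n{-}1)$ atom. A short computation gives
\[
\lambda x\;\equiv\;\lambda' x'\;+\;\frac{\ell_n b_n}{N_n}\;+\;\frac{\lambda'\,b_n}{N_1\cdots N_n}\pmod{\mathbb Z},
\]
and the last cross term is in general not an integer (e.g.\ $n=2$, $N_1=2$, $N_2=3$, $\ell_1=b_2=1$ gives $1/6$). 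So the ``bookkeeping'' you flag as the main obstacle actually fails: the matrix does not split as a straight Kronecker product.

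The fix is immediate and recovers exactly the bounds you wrote. The extra phase $e^{-2\pi i\lambda' b_n/(N_1\cdots N_n)}$ is unimodular, so the level-$n$ matrix factors as $(\overline{\mathcal F_n}\otimes I)\,U_n\,(I\otimes \mathcal G_{n-1})$ with $U_n$ a diagonal \emph{unitary}. Since $U_n$ is an isometry and $\mathcal F_n\otimes I$ inherits the two-sided bound $(1\pm\epsilon_n)$ from \eqref{ALmost1}, induction on $n$ yields
\[
\prod_{j\le n}(1-\epsilon_j)\,\|\mathbf w\|\;\le\;\|\mathcal G_n\mathbf w\|\;\le\;\prod_{j\le n}(1+\epsilon_j)\,\|\mathbf w\|
\]
as you claimed. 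Equivalently, one can sum first over $\ell_n\in L_n$ (absorbing the unimodular phase into the weight vector before applying \eqref{ALmost1}, which leaves its norm unchanged) and then over $\lambda'\in\Lambda_{n-1}$ by induction. With this correction the remainder of your argument---the passage from $\Lambda_n$ to $\Lambda$ via $\mathcal S_n\subset\mathcal S_m$, the convergence of $\prod(1\pm\epsilon_j)$, and the density extension using the upper Bessel bound---goes through unchanged. (A minor side remark: the atoms of $\mu_n$ are $\sum_j b_j/(N_1\cdots N_j)$, so your enumeration ``$B_1+N_1B_2+\cdots$ divided by $N_1\cdots N_n$'' does not describe them; this is harmless once you index cylinders by tuples $(b_1,\dots,b_n)$ directly.)
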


\medskip

If we have a self-similar almost-Parseval-frame tower (all $N_j = N$), then the measure $\mu_{>n}(\cdot)$ can be written as $\mu(N^n\cdot)$. In this case, we can produce new candidates of frame spectra as in (\ref{eqLambda_k}) and (\ref{eqlambda}) and the consideration for ${\mathcal Z}= \emptyset$ works in a similar way as in Proposition \ref{prop_main2}. We have

\begin{theorem} \label{theorem 3.1}
Suppose that $(N_j,B_j)$ is a self-similar almost-Parseval-frame tower and the associated measure $\mu$ satisfies ${\mathcal Z} = \emptyset$. Then $\mu$ is a frame-spectral measure.
\end{theorem}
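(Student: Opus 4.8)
The plan is to transcribe the proof of Theorem~\ref{th_Z} (the case $\mathcal Z=\emptyset$) into the frame setting, replacing the Hadamard-triple input by the almost-Parseval-frame tower and carrying along the multiplicative factors $\prod_j(1\pm\epsilon_j)$. Since the tower is self-similar, write $N_j=N^{n_j}$, $B_j=B+NB+\dots+N^{n_j-1}B$, so that $\mu=\mu(N,B)$ and, with $m_n=n_1+\dots+n_n$, one has $\mu_{>n}(\cdot)=\mu(N^{m_n}\cdot)$, hence $\widehat{\mu_{>n}}(\xi)=\widehat\mu(N^{-m_n}\xi)$. By the Proposition stated just before the theorem it then suffices to produce a set $\Lambda=\bigcup_k\Lambda_k$ of the form \eqref{eqLambda_k}--\eqref{eqlambda} (built from blocks congruent, modulo powers of $N$, to the pre-spectra of a suitable regrouping of the tower) with
$$\delta(\Lambda)=\inf_{k\ge 1}\inf_{\lambda\in\Lambda_k}\bigl|\widehat\mu(N^{-m_k}\lambda)\bigr|^2>0,$$
since the Proposition then yields the honest frame bounds $\delta(\Lambda)\prod_k(1-\tilde\epsilon_k)^2$ and $\prod_k(1+\tilde\epsilon_k)^2$, both positive and finite once $\sum_k\tilde\epsilon_k<\infty$.

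First I would record a regrouping step. Merging consecutive blocks $j=p+1,\dots,q$ yields a single block $(N^{n'},B_{n'})$ with $n'=n_{p+1}+\dots+n_q$ --- still of self-similar form --- together with a natural pre-spectrum $L'=L_{p+1}+N^{n_{p+1}}L_{p+2}+\dots$; by the composition lemma for almost-Parseval-frame blocks (the ``almost'' analogue, cf.\ \cite{LaiW}, of the standard fact that a composition of Hadamard triples is a Hadamard triple), the associated matrix is $(1\pm\tilde\epsilon)$-almost isometric with $1\pm\tilde\epsilon=\prod_{j=p+1}^{q}(1\pm\epsilon_j)$. For any locally finite grouping the regrouped errors satisfy $\sum_k\tilde\epsilon_k<\infty$ (it is controlled by $\sum_j\epsilon_j$ up to finitely many bounded terms, using $\prod_j(1-\epsilon_j)>0$, which holds precisely because $\sum_j\epsilon_j<\infty$), and crucially each regrouped block can be made as large as we please --- the freedom exploited in the proof of Proposition~\ref{prop_main2}. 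The regrouped tower has the same convolution product $\mu$.

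Next I would apply Lemma~\ref{lem2.1} with $X=[0,1]$ (compact and containing $0$), obtaining $\epsilon_0,\delta_0>0$ so that for every $x\in[0,1]$ there is $k_x\in\bz$ with $|\widehat\mu(x+y+k_x)|^2\ge\delta_0$ whenever $|y|<\epsilon_0$, and $k_0=0$. Then, mimicking Proposition~\ref{prop_main2}, build $\Lambda$ inductively: put $\Lambda_0=\{0\}$, $m_0=0$; given $\Lambda_k$, choose the next regrouped block of size $n_{k+1}$ large enough that $|N^{-(n_{k+1}+p)}\lambda|<\epsilon_0$ for all $\lambda\in\Lambda_k$ and $p\ge 0$; let $J_{n_{k+1}}\subset\{0,\dots,N^{n_{k+1}}-1\}$ be that block's pre-spectrum reduced modulo $N^{n_{k+1}}$, set
$$\widehat{J}_{n_{k+1}}=\bigl\{\,j+N^{n_{k+1}}k_x : j\in J_{n_{k+1}},\ x=N^{-n_{k+1}}j\in[0,1)\,\bigr\},$$
and $\Lambda_{k+1}=\Lambda_k+N^{m_k}\widehat{J}_{n_{k+1}}$, $m_{k+1}=m_k+n_{k+1}$. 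Adding multiples of $N^{n_{k+1}}$ to a pre-spectrum leaves the associated matrix unchanged, so the regrouped tower equipped with the $\widehat{J}_{n_k}$ is still an almost-Parseval-frame tower with the same (summable) errors and the same measure $\mu$, while $0\in\widehat{J}_{n_{k+1}}$ since $k_0=0$, and the $\Lambda_k$ increase and have the form \eqref{eqLambda_k}.

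Finally I would check $\delta(\Lambda)\ge\delta_0$ as in Proposition~\ref{prop_main2}: for $\lambda\in\Lambda_k$ write $\lambda=\lambda'+N^{m_{k-1}}j+N^{m_k}k_x$ with $\lambda'\in\Lambda_{k-1}$, $j\in J_{n_k}$; then $N^{-m_k}\lambda=N^{-m_k}\lambda'+N^{-n_k}j+k_x$ with $|N^{-m_k}\lambda'|<\epsilon_0$ and $N^{-n_k}j\in[0,1)=X$, so Lemma~\ref{lem2.1} gives $|\widehat\mu(N^{-m_k}\lambda)|^2\ge\delta_0$ uniformly in $k$. The Proposition stated before the theorem then shows $\Lambda$ is a Fourier frame for $L^2(\mu)$, i.e.\ $\mu$ is frame-spectral. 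The only step needing real care is the regrouping: establishing that a composition of almost-Parseval-frame blocks is again one, with multiplicatively combined constants, and that regrouping preserves summability of the errors while permitting arbitrarily large blocks. Once that is in hand, the argument is a faithful transcription of the proof of Theorem~\ref{th_Z}, with unitary matrices replaced by $(1\pm\epsilon)$-almost isometries.
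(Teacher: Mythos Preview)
Your proposal is correct and follows exactly the route the paper indicates: the paper does not give a detailed proof of Theorem~\ref{theorem 3.1} but simply says that, since $\mu_{>n}(\cdot)=\mu(N^{m_n}\cdot)$ in the self-similar case, ``we can produce new candidates of frame spectra as in \eqref{eqLambda_k} and \eqref{eqlambda} and the consideration for $\mathcal Z=\emptyset$ works in a similar way as in Proposition~\ref{prop_main2}.'' You have carried out precisely this transcription, correctly identifying the two points that need attention beyond the Hadamard case --- the regrouping of blocks (so that the next block can be taken large enough for the $\epsilon_0$-smallness condition \eqref{eq4.5}) and the fact that translating pre-spectra by multiples of $N^{n_{k+1}}$ leaves the matrices $\mathcal F_j$ unchanged --- and your choice $X=[0,1]$ in Lemma~\ref{lem2.1} is the natural replacement for $T(R^T,L)$ here.
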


 For a self-similar measure on ${\mathbb R}^1$ as defined in Definition \ref{APFT}, ${\mathcal Z} = \emptyset$ can be obtained without additional assumption.

  \medskip

 \noindent{\it Proof of Theorem \ref{th2} (i).} By Theorem \ref{theorem 3.1}, it suffices to show that ${\mathcal Z} = \emptyset$ for self-similar measures $\mu(N,B)$ defined by the almost-Parseval-frame tower in Definition \ref{APFT}. In fact, as $B\subset \{0,1,...,N-1\}$, the self-similar set $T(N,B)$ is a compact set inside $[0,1]$. By the Stone-Weierstrass theorem, the linear span of exponentials $e_n$ with integer frequencies is complete in the space of continuous functions on $T(N,B)$. This shows that ${\mathcal Z} = \emptyset$, completing the proof. \qquad$\Box$

\medskip

In the end of this section, we demonstrate the existence of an almost-Parseval-frame tower with $\epsilon_j>0$, which gives a proof of Theorem \ref{th2}(ii). More precisely, we prove

\begin{theorem}\label{th0.1} Let $N_n$ and $M_n$ be positive integers satisfying
\begin{equation}\label{eq0.2}
N_j = M_jK_j+\alpha_j
\end{equation}
for some integer $K_j$ and $0\leq\alpha_j<M_j$ with
\begin{equation}\label{eq0.3}
\sum_{j=1}^{\infty}\frac{\alpha_j\sqrt{M_j}}{K_j}<\infty.
\end{equation}
Define
\begin{equation}\label{eq0.4}
B_j = \{0, K_j,...,(M_j-1)K_j\}, \ L_j = \{0,1,...,M_j-1\}.
\end{equation}
Then   $(N_{j}, B_{j})$ forms an almost-Parseval-frame tower associated with
$$
\epsilon_j=\frac{2\pi\alpha_j\sqrt{M_j}}{{K_j}}
$$ and its pre-spectrum is $\{L_j\}$. 
\end{theorem}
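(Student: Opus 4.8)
The plan is to verify the three defining conditions of an almost-Parseval-frame tower (Definition \ref{APFT}) directly for the given $(N_j, B_j, L_j)$, with the only real work being the two-sided bound \eqref{ALmost1}. Conditions (i) and (ii) are immediate: $N_j = M_j K_j + \alpha_j \geq M_j \geq 2$ (we may assume $M_j \geq 2$, else the factor is trivial), and $B_j = \{0, K_j, \dots, (M_j-1)K_j\} \subset \{0,1,\dots,N_j-1\}$ since $(M_j-1)K_j < M_j K_j \leq N_j$, with $0 \in B_j$; also $0 \in L_j$. So the heart of the matter is estimating the singular values of the $M_j \times M_j$ matrix $\mathcal F_j = \frac{1}{\sqrt{M_j}}\left[e^{2\pi i b\lambda/N_j}\right]_{\lambda \in L_j, b \in B_j}$, which here has entries $\frac{1}{\sqrt{M_j}} e^{2\pi i k K_j \lambda / N_j}$ for $k, \lambda \in \{0,1,\dots,M_j-1\}$.

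The key idea is to compare $\mathcal F_j$ with the exact discrete Fourier matrix. If $N_j$ were exactly $M_j K_j$, then $k K_j / N_j = k/M_j$ and $\mathcal F_j$ would be precisely the $M_j$-point DFT matrix, which is unitary — this is the $\alpha_j = 0$ Hadamard-triple case. With $\alpha_j \neq 0$, write $\frac{kK_j}{N_j} = \frac{k}{M_j} \cdot \frac{M_j K_j}{N_j} = \frac{k}{M_j}\left(1 - \frac{\alpha_j}{N_j}\right)$, so the $(\lambda,k)$ entry of $\mathcal F_j$ is $\frac{1}{\sqrt{M_j}} e^{2\pi i k\lambda/M_j} \cdot e^{-2\pi i k\lambda \alpha_j/(M_j N_j)}$. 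Thus $\mathcal F_j = \mathcal U_j + \mathcal E_j$ where $\mathcal U_j$ is unitary (the DFT matrix) and $\mathcal E_j$ has entries bounded in modulus by $\frac{1}{\sqrt{M_j}}\left|e^{-2\pi i k\lambda\alpha_j/(M_j N_j)} - 1\right| \leq \frac{1}{\sqrt{M_j}} \cdot \frac{2\pi k\lambda \alpha_j}{M_j N_j} \leq \frac{1}{\sqrt{M_j}} \cdot \frac{2\pi \alpha_j M_j}{N_j} \leq \frac{2\pi\alpha_j \sqrt{M_j}}{K_j}$, using $k, \lambda \leq M_j - 1 < M_j$ and $N_j \geq M_j K_j$. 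Then the operator norm of $\mathcal E_j$ is at most its Frobenius norm, $\|\mathcal E_j\| \leq \|\mathcal E_j\|_{HS} \leq M_j \cdot \frac{2\pi\alpha_j\sqrt{M_j}}{K_j \sqrt{M_j}}$... here I need to be a bit more careful with the counting: summing $M_j^2$ entries each of modulus at most $\frac{2\pi\alpha_j\sqrt{M_j}}{M_j^{1/2} N_j} k\lambda$ and using $\sum k\lambda = \left(\sum_{k=0}^{M_j-1}k\right)^2 \leq M_j^4/4$, one gets a clean bound of the form $\|\mathcal E_j\| \leq C\alpha_j\sqrt{M_j}/K_j$, and then a constant-chasing choice shows the precise constant $2\pi$ works for $\epsilon_j := 2\pi\alpha_j\sqrt{M_j}/K_j$ (possibly after noting $\sum\epsilon_j<\infty$ forces $\alpha_j\sqrt{M_j}/K_j$ small eventually, so the quadratic error terms are negligible).

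Once $\|\mathcal F_j - \mathcal U_j\| \leq \epsilon_j$ with $\mathcal U_j$ unitary, the two-sided estimate follows from the triangle inequality: $\|\mathcal F_j \mathbf w\| \leq \|\mathcal U_j \mathbf w\| + \|\mathcal E_j \mathbf w\| \leq (1+\epsilon_j)\|\mathbf w\|$ and $\|\mathcal F_j\mathbf w\| \geq \|\mathcal U_j\mathbf w\| - \|\mathcal E_j\mathbf w\| \geq (1-\epsilon_j)\|\mathbf w\|$, which is exactly \eqref{ALmost1}. The summability $\sum_j \epsilon_j < \infty$ is precisely hypothesis \eqref{eq0.3}. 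Finally, the $\{L_j\}$ with $0 \in L_j$ are by construction the pre-spectrum. The main obstacle — really the only place demanding care — is getting the entrywise/operator-norm estimate on $\mathcal E_j$ to land with a constant small enough to match the advertised $\epsilon_j = 2\pi\alpha_j\sqrt{M_j}/K_j$; the crude Hilbert–Schmidt bound may overshoot by a factor of $M_j$, so one should instead estimate $\|\mathcal E_j \mathbf w\|$ directly by expanding $\mathcal E_j \mathbf w$ componentwise, writing each component of $\mathcal F_j \mathbf w - \mathcal U_j\mathbf w$ as $\frac{1}{\sqrt{M_j}}\sum_k w_k e^{2\pi i k\lambda/M_j}(e^{-2\pi i k\lambda\alpha_j/(M_jN_j)}-1)$ and applying Cauchy–Schwarz with the uniform entry bound, so that the factor $M_j$ from the sum is compensated by the $\frac{1}{\sqrt{M_j}}$ normalization and a second $\frac{1}{\sqrt{M_j}}$ from $\|\mathbf w\|$ paired against the row. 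That is the computation I would perform with full care; everything else is bookkeeping.
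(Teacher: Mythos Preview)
Your approach is essentially the same as the paper's: compare $\mathcal F_j$ to the unitary DFT matrix $\mathcal H_j$ (your $\mathcal U_j$), bound the difference entrywise via $|e^{i\theta_1}-e^{i\theta_2}|\leq |\theta_1-\theta_2|$, pass to the Frobenius norm, and finish with the triangle inequality. Your worry that the Hilbert--Schmidt bound ``may overshoot by a factor of $M_j$'' is unfounded: carrying out the crude bound carefully gives
\[
\|\mathcal E_j\|_{HS}^2 \leq \frac{1}{M_j}\sum_{b\in B_j}\sum_{\lambda\in L_j}\Bigl(\tfrac{2\pi b\lambda\alpha_j}{M_jK_jN_j}\Bigr)^2 \leq \frac{1}{M_j}\cdot M_j^2\cdot 4\pi^2\frac{M_j^2\alpha_j^2}{N_j^2} = \frac{4\pi^2 M_j^3\alpha_j^2}{N_j^2}\leq \frac{4\pi^2 M_j\alpha_j^2}{K_j^2},
\]
using $b\leq M_jK_j$, $\lambda\leq M_j$, and $N_j\geq M_jK_j$ --- exactly $\epsilon_j^2$, no extra factor; this is precisely the paper's computation, so the alternative row-by-row Cauchy--Schwarz you suggest at the end is unnecessary.
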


\begin{proof}

Let
$$
{\mathcal F}_j= \frac{1}{\sqrt{M_j}}\left[e^{2\pi i b \lambda/N_j}\right]_{\lambda\in L_j,b\in B_j}, \ {\mathcal H}_j= \frac{1}{\sqrt{M_j}}\left[e^{2\pi i b \lambda/M_jK_j}\right]_{\lambda\in L_j,b\in B_j}.
$$
Then ${\mathcal H}_j$ is a unitary matrix (in fact Hadamard matrices). We first show that for any $j>0$, the operator norm ($\|A\|: = \max_{\|x\|=1}\|Ax\|$)
\begin{equation}\label{eq2.0}
\|{\mathcal F}_j-{\mathcal H}_j\|\leq\frac{2\pi \alpha_j\sqrt{M_j}}{K_j}.
\end{equation}
To see this, We note that by Cauchy-Schwarz inequality,
\begin{equation}\label{eq2.1}
\|{\mathcal F}_j-{\mathcal H}_j\|^2\leq  \frac{1}{M_j}\sum_{b\in B_j}\sum_{\lambda\in L_j}\left|e^{2\pi i b\lambda/N_j}-e^{2\pi i b\lambda/M_jK_j}\right|^2.
\end{equation}
We now estimate the difference of the exponentials inside the summation using an elementary estimate
$$
|e^{i\theta_1}-e^{i\theta_2}| = |e^{i(\theta_1-\theta_2)}-1|\leq |\theta_1-\theta_2|.
$$
This implies that
$$
\begin{aligned}
\left|e^{2\pi i b\lambda/N_j}-e^{2\pi i b\lambda/M_jK_j}\right|^2\leq& \left|\frac{2\pi b\lambda}{N_j}-\frac{2\pi b\lambda}{M_jK_j}\right|^2\nonumber\\
=& 4\pi^2\frac{b^2\lambda^2 \alpha_j^2}{M_j^2K_j^2N_j^2}  \ \ \  \ \mbox{(by $N_j = M_jK_j+\alpha_j$)}\nonumber\\
\leq &4\pi^2\frac{M_j^2\alpha_j^2}{N_j^2}  \ \ \ \ \mbox{(by $b\leq M_jK_j$ and $\lambda\leq M_j$)}\\
\end{aligned}
$$
Hence, from (\ref{eq2.1}),
\begin{eqnarray}
\|{\mathcal F}_j-{\mathcal H}_j\|^2&\leq& \frac{1}{M_j}\sum_{b\in B_j}\sum_{\lambda\in L_j}4\pi^2\frac{M_j^2\alpha_j^2}{N_j^2} \nonumber\\
 &=& 4\pi^2\frac{M_j^3\alpha_j^2}{N_j^2} \nonumber \\
 &= & 4\pi^2\frac{M_j\alpha_j^2}{\left( K_j+\alpha_j/M_j\right)^2}
\end{eqnarray}
As $\alpha_j\ge 0$, $\|{\mathcal F}_j-{\mathcal H}_j\|^2\leq  4\pi^2{\alpha_j^2 M_j}/{K_j^2}$ and thus  (\ref{eq2.0}) follows by taking square root.

\medskip

 We now show that $\{(N_{j}, B_{j})\}$ forms an almost-Parseval-frame tower with pre-spectrum $L_j$. The first two conditions for the almost-Parseval-frame tower are clearly satisfied. To see the last condition, we recall that $\epsilon_j = 2\pi \sqrt{M_j}\alpha_j/{K_j} $.  From the triangle inequality and (\ref{eq2.0}), we have
$$
\begin{aligned}
\|{\mathcal F}_j{\bf w}\|\leq&\|{\mathcal H}_j{\bf w}\| + \|{\mathcal F}_j-{\mathcal H}_j\|\|{\bf w}\|\\
\leq&  \left(1+\frac{2\pi \alpha_j\sqrt{M_j}}{K_j}\right)\|{\bf w}\| = (1+\epsilon_j)\|{\bf w}\|.\\
\end{aligned}
$$
Similarly, for the lower bound,
$$
\begin{aligned}
\|{\mathcal F}_{j}{\bf w}\|\geq &\|{\mathcal H}_{j}{\bf w}\|-\|{\mathcal F}_{j}-{\mathcal H}_{j}\|\|{\bf w}\|\\
\geq& \left(1-\frac{2\pi \alpha_j\sqrt{M_j}}{K_j}\right)\|{\bf w}\| = (1-\epsilon_j)\|{\bf w}\|.
\end{aligned}
$$
Thus, from (\ref{ALmost1}), the last condition follows and $(N_j,B_j)$ satisfies the almost-Parseval-frame condition associated with $\{\epsilon_j\}$ and $\sum_{j=1}^{\infty}\epsilon_j<\infty$ is guaranteed by (\ref{eq0.3}) in the assumption.
\end{proof}

Some of the fractal measures induced by the almost-Parseval-frame tower were found to be non-spectral, one can refer to \cite{LaiW} for detail.

\medskip

\section{Explicit construction of spectrum}
In general, the canonical orthogonal set in Proposition \ref{prop_main} is not necessarily a spectrum. But in some cases, we can complete this set by adding some more points, and, in some cases, it is possible to gives an explicit formula for the spectrum of the measure $\mu(R,B)$. Such a description can be given in the following definition. This is always true in dimension one, as explained in \cite{DJ06}.
\begin{definition}\label{definv}
Let $(R,B,L)$ be a Hadamard triple. We define the function
$$m_B(x)=\frac{1}{\#B}\sum_{b\in B}e^{2\pi i \ip{b}{x}},\quad (\xi\in\br^d).$$
The Hadamard triple condition implies that $\delta_{R^{-1}B}$ is a spectral measure with a spectrum $L$ and $m_{R^{-1}B}$ is the Fourier transform of this measure.  The Hadamard condition implies that
\begin{equation}\label{M_B}
\sum_{\ell\in L}|m_B((R^T)^{-1}(x+\ell))|^2=1, \mbox{or} \ \sum_{\ell\in L}|m_B(\tau_{\ell}(x))|^2=1,
\end{equation}
where we define the maps
$$\tau_{\ell}(x)=(R^T)^{-1}(x+\ell),\quad(x\in\br^d,\ell\in L), \ \mbox{and} \ \tau_{\ell_1...\ell_m} = \tau_{\ell_1}\circ...\circ\tau_{\ell_m}.
$$
A closed set $K$ in $\br^d$ is called {\it invariant (with respect to the system $(R,B,L)$)} if, for all $x\in K$ and all $\ell\in L$
$$
 |m_B(\tau_{\ell}(x))|>0 \ \Longrightarrow \ \tau_{\ell}(x)\in K.
 $$
We say that {\it the transition, using $\ell$, from $x$ to $\tau_\ell( x)$ is possible}, if $\ell\in  L$ and $m_B(\tau_{\ell} (x))>0$. A compact invariant set is called {\it minimal} if it does not contain any proper compact invariant subset.

For $\ell_1,\dots,\ell_m\in L$, the cycle $\mathcal C(\ell_1,\dots,\ell_m)$ is the set
$$\mathcal C(\ell_1,\dots,\ell_m)=\{x_0,\tau_{\ell_m}(x_0),\tau_{\ell_{m-1}\ell_m}(x_0),\dots, \tau_{\ell_2\dots\ell_m}(x_0)\},$$
where $x_0:=\wp(\ell_1,\dots, \ell_m)$ is the fixed point of the map $\tau_{\ell_1...\ell_m}$. i.e. $\tau_{\ell_1...{\ell_m}}(x_0)=x_0$.
The cycle $\mathcal C(\ell_1,\dots, \ell_m)$ is called an {\it extreme cycle for $(R,B,L)$} if $|m_B(x)|=1$ for all $x\in \mathcal C(\ell_1,\dots, \ell_m)$.
\end{definition}

\begin{definition}\label{definv2}
We say that the Hadamard triple $(R,B,L)$ is {\it dynamically simple} if the only minimal compact invariant set are extreme cycles. For a Hadamard triple $(R,B,L)$, the {\it orthonormal set $\Lambda$ generated by extreme cycles} is the smallest set such that
 \begin{enumerate}
 \item it contains $-{\mathcal C}$ for all extreme cycles ${\mathcal C}$ for $(R,B,L)$
 \item it satisfies $R^T\Lambda+L\subset \Lambda$.
 \end{enumerate}
When this set $\Lambda$ is a spectrum (see Theorem \ref{th_dynamic} below), we call it {\it the dynamically simple spectrum}.

More generally, the {\it set generated by an invariant subset $A$} of $\br^d$, is the smallest set which contains $-A$ and satisfies (ii).
\end{definition}

\medskip

\begin{theorem}\label{th_dynamic}\cite{DL15c}
Let  $(R,B,L)$ be a dynamically simple Hadamard triple. Then the orthonormal set $\Lambda$ generated by extreme cycles is a spectrum for the self-affine measure $\mu_{R,B}$ and $\Lambda$ is explicitly given by
$$\Lambda=\{\ell_0+R^T\ell_1+\dots (R^T)^{n-1}\ell_{n-1}+(R^T)^n(-c) : \ell_0,\dots,\ell_{n-1}\in L, n\geq0, \ c\mbox{ are extreme cycle points}\}.
$$
Moreover, if $(R,B,L)$ is a Hadamard triple on ${\mathbb R}^1$, it must be dynamically simple.
\end{theorem}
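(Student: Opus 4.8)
The displayed formula for $\Lambda$ is just Definition~\ref{definv2} unwound: iterating $\lambda\mapsto R^T\lambda+\ell$ ($\ell\in L$) from the seeds $-c$, with $c$ an extreme cycle point, produces exactly the points $\ell_0+R^T\ell_1+\cdots+(R^T)^{n-1}\ell_{n-1}+(R^T)^n(-c)$, and the cycle identity $-c=R^T(-\tilde c)+\tilde\ell$, where $\tilde c=\tau_{\tilde\ell}(c)$ is the successor of $c$ on its cycle, shows that truncating at level $n$ gives an increasing family of sets with union $\Lambda$. So the content of the first assertion is that $\{e_\lambda:\lambda\in\Lambda\}$ is orthonormal and complete. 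For orthonormality I would invoke the cycle calculus going back to Jorgensen--Pedersen, Strichartz, and Dutkay--Jorgensen: via $\widehat\mu(\xi)=m_B((R^T)^{-1}\xi)\,\widehat\mu((R^T)^{-1}\xi)$, the Hadamard identity \eqref{M_B}, and the unimodularity $|m_B|\equiv1$ along each extreme cycle, it reduces to unitarity of the finite matrices $H_n$. It is convenient to record the reformulation: writing $N_\Lambda(x):=\sum_{\lambda\in\Lambda}|\widehat\mu(x+\lambda)|^2=\|P_Ve_{-x}\|^2$, where $V=\overline{\operatorname{span}}\{e_\lambda:\lambda\in\Lambda\}$, orthonormality of $\{e_\lambda\}$ is equivalent to $N_\Lambda\le1$ on $\br^d$ (Bessel gives one direction; for the other, evaluate at $x=-\lambda_0$ with $\lambda_0\in\Lambda$).

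For completeness I would use the Jorgensen--Pedersen criterion \cite{JP98}: since $\mu$ is compactly supported, $\{e_\lambda:\lambda\in\Lambda\}$ is an orthonormal basis of $L^2(\mu)$ if and only if $N_\Lambda\equiv1$; as $N_\Lambda$ is continuous with $N_\Lambda\le1$, the task is to exclude $N_\Lambda(x_0)<1$. The engine is the Ruelle transfer operator $(\mathcal R\phi)(x)=\sum_{\ell\in L}|m_B(\tau_\ell x)|^2\phi(\tau_\ell x)$, which fixes constants, $\mathcal R1=1$, by \eqref{M_B}. Using $\Lambda=R^T\Lambda+L$ together with the factorization of $\widehat\mu$ --- here some bookkeeping is needed because the seeds $-c$ are rational rather than integral, but each extreme cycle point is $R^T$-periodic modulo $\bz^d$, so only finitely many phases occur and one still arrives at $N_\Lambda=\mathcal RN_\Lambda$ --- the function $g:=1-N_\Lambda\ge0$ is continuous, bounded, and $\mathcal R$-harmonic: $\mathcal Rg=\mathcal R1-\mathcal RN_\Lambda=1-N_\Lambda=g$. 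Moreover $g$ vanishes on every extreme cycle: for a cycle point $c$ one has $-c\in\Lambda$, so the single term $\lambda=-c$ already yields $N_\Lambda(c)\ge|\widehat\mu(0)|^2=1$, hence $g(c)=0$. Finally $\mathcal R$ is the transition operator of the contractive, place-dependent Markov chain $X_n=\tau_{\ell_n}(X_{n-1})$, so $g(x)=\mathbb E_x[g(X_n)]$ for all $n$; under the hypothesis of dynamical simplicity the minimal closed invariant sets of this chain are exactly the (finitely many) extreme cycles, on which $g\equiv0$, and the ergodic theory of the chain then forces $g\equiv0$, i.e. $N_\Lambda\equiv1$. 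I expect this last Perron--Frobenius step, together with the transfer identity, to be the main obstacle: it is precisely where ``dynamically simple'' enters, and identifying the recurrent behaviour of a continuous-state, place-dependent contractive chain with the union of extreme cycles is the delicate point.

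It remains to show that every Hadamard triple on $\br^1$ is dynamically simple, i.e. that every minimal compact invariant set $K\subset\br$ is an extreme cycle; by the previous two paragraphs this gives the explicit spectrum on $\br^1$. First, $\mathcal Z=\emptyset$ in dimension one, since $\operatorname{supp}\mu$ is a compact subset of $\br$, so by Stone--Weierstrass $\overline{\operatorname{span}}\{e_k:k\in\Z\}=L^2(\mu)$, which rules out a nonzero $f\in L^2(\mu)$ orthogonal to all integer frequencies (and $\mathcal Z\ne\emptyset$ would exhibit such an $f$). The crux, following \cite{DJ06}, is the one-dimensional structural fact that $K$ is finite: after conjugating so that $R\ge2$, all maps $\tau_\ell(x)=(x+\ell)/R$ contract by the common factor $1/R$, which forces the forward orbit closure of any point of $K$ --- hence $K$ itself, by minimality --- to be eventually periodic, so $K$ is a finite union of periodic orbits and, by minimality again, a single periodic orbit. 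A compact invariant periodic orbit must satisfy $|m_B|\equiv1$ on it: if some orbit point $x$ had $|m_B(\tau_\ell x)|<1$ for the digit $\ell$ continuing the orbit, then $\sum_{\ell'\in L}|m_B(\tau_{\ell'}x)|^2=1$ would give a second allowed transition out of $x$, landing (by invariance) in $K$ and contradicting that $K$ is a single orbit. Hence $K$ is an extreme cycle, $(R,B,L)$ is dynamically simple, and the first part of the theorem applies. This one-dimensional finiteness lemma is the only genuinely technical point of the last step.
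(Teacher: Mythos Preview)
The paper does not prove this theorem; it is stated with a citation to \cite{DL15c}, so there is no in-paper proof to compare against. Your outline of the first assertion --- orthonormality via the Hadamard/cycle algebra, completeness via the Jorgensen--Pedersen entire function $N_\Lambda$ and the Ruelle identity $\mathcal R N_\Lambda=N_\Lambda$, together with $1-N_\Lambda=0$ on every extreme cycle and the dynamical-simplicity hypothesis to force $1-N_\Lambda\equiv0$ --- is exactly the approach of \cite{DJ06,DL15c}, and the bookkeeping you flag (that $\Lambda=R^T\Lambda+L$, verified via the cycle relation $-c=\tilde\ell+R^T(-\tilde c)$) is correct.

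There is, however, a real gap in your one-dimensional step. The sentence ``all maps $\tau_\ell$ contract by the common factor $1/R$, which forces the forward orbit closure of any point of $K$ \ldots\ to be eventually periodic'' is not valid: the forward orbit under a multi-valued contractive IFS is a branching tree, not a sequence, and its closure can be an infinite Cantor set --- indeed the full attractor $T(R^T,L)$ is itself invariant in the sense of Definition~\ref{definv}. Uniform contraction alone does not force a minimal compact invariant set to be finite. The actual mechanism, in \cite{DJ06} and ultimately in \cite{CCR} (see the discussion following Proposition~\ref{pr1.14}), is a structure theorem: a minimal compact invariant set for such a system is either finite or contained in a finite union of translates of a proper rational $R^T$-invariant subspace. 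In $\br^1$ there is no proper invariant subspace, and that is what forces $K$ to be finite. Once $K$ is known to be a single periodic orbit, your closing argument via \eqref{M_B} (a second possible transition would enlarge $K$, contradicting minimality) correctly yields $|m_B|\equiv1$ on $K$, so $K$ is an extreme cycle. You rightly identified the finiteness of $K$ as the technical crux; the reason you give for it is the part that needs to be replaced.
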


\begin{example}
There are Hadamard triples which are not dynamically-simple. For example let
$$R=\begin{bmatrix}2&1\\
0&2\end{bmatrix},\quad B=\left\{\begin{bmatrix}0\\0\end{bmatrix},\begin{bmatrix}3\\0\end{bmatrix},\begin{bmatrix}0\\1\end{bmatrix},\begin{bmatrix}3\\1\end{bmatrix}\right\},
L=\left\{\begin{bmatrix}0\\0\end{bmatrix},\begin{bmatrix}1\\0\end{bmatrix},\begin{bmatrix}0\\1\end{bmatrix},\begin{bmatrix}1\\1\end{bmatrix}\right\}.$$

\begin{figure}[h]
\centering
\begin{minipage}{.5\textwidth}
\centering
\includegraphics[width=\linewidth]{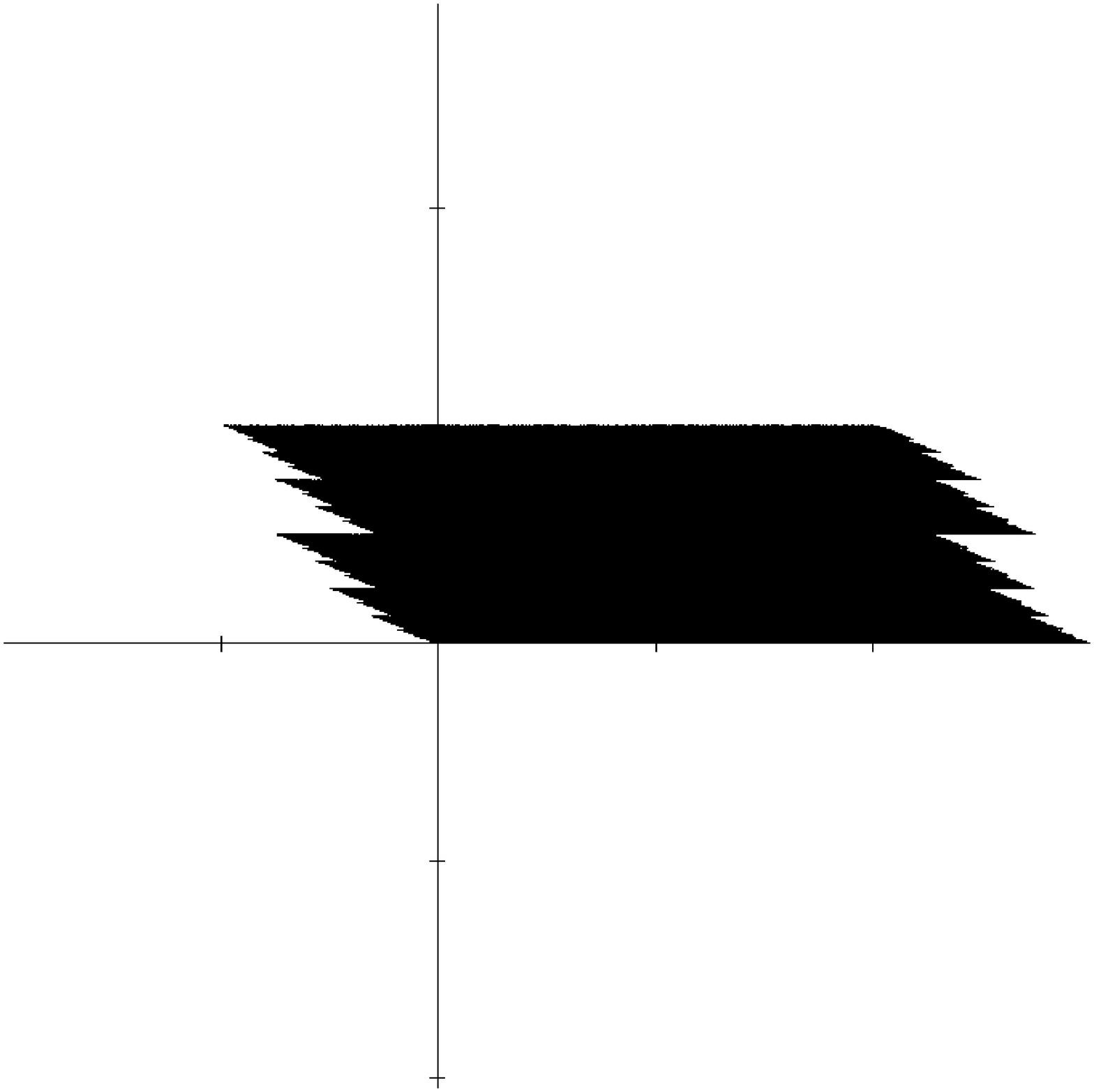}%
\caption{$T(R,B)$}
\label{a}
\end{minipage}%
\begin{minipage}{.5\textwidth}
\centering
\includegraphics[width=\linewidth]{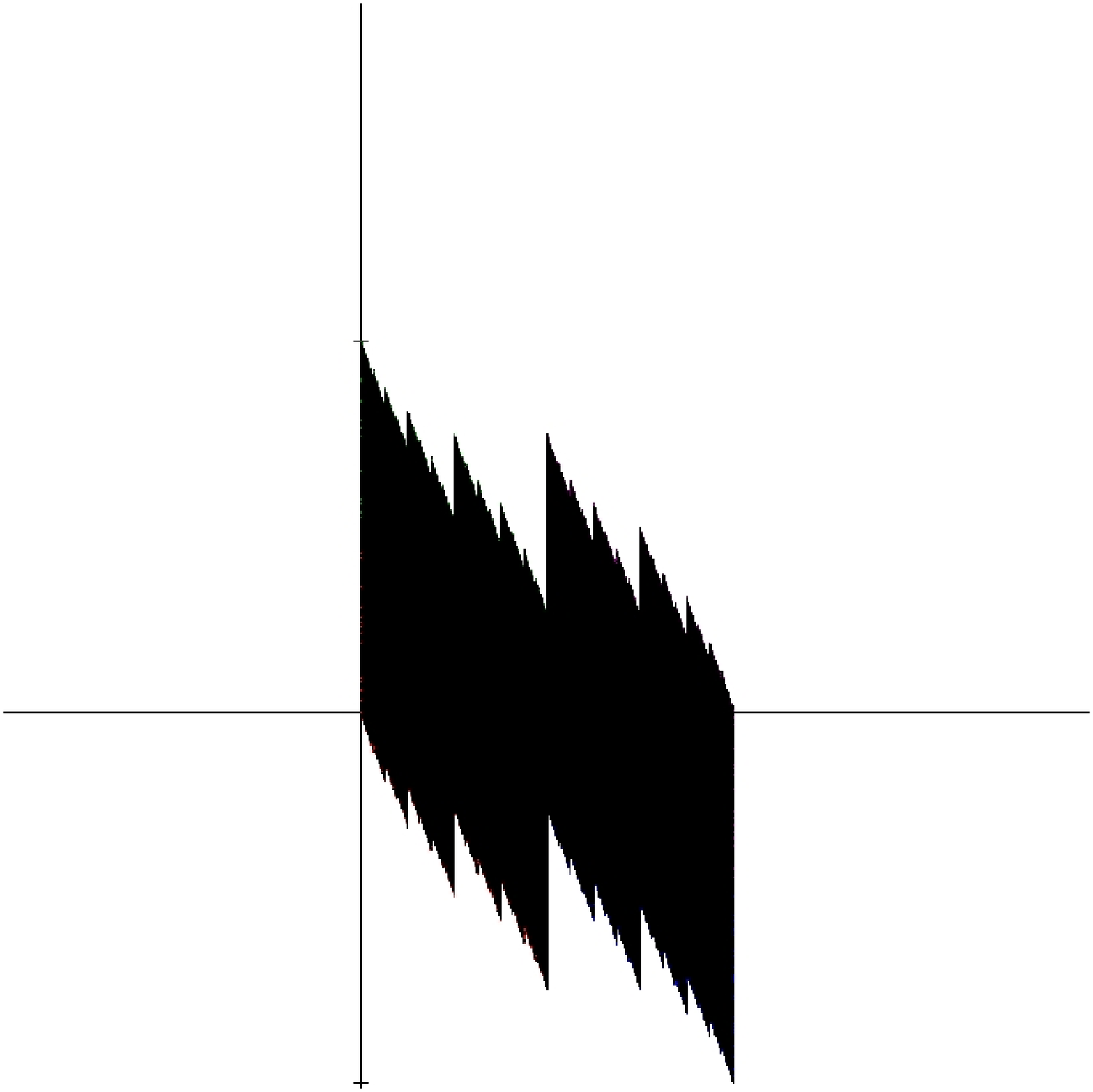}%
\caption{$T(R^T,L)$}
\label{b}
\end{minipage}
\end{figure}

Note that $B$ is a complete set of representatives $\mod R\bz^2$ and $L$ is a complete set of representatives $\mod R^T\bz^2$; thus we have a Hadamard triple. As shown in \cite[Example 2.3]{LW1}, the measure $\mu(R,B)$ is the normalized Lebesgue measure on the attractor $T(R,B)$ which tiles $\br^2$ with $3\bz\times\bz$. Hence $\frac13\bz\times\bz$ is a spectrum for $\mu(R,B)$ (by \cite{Fug74}).

We look for the extreme cycles: we have
$$m_B(x,y)=\frac{1}{4}(1+e^{2\pi i3x}+e^{2\pi iy}+e^{2\pi i(3x+y)}).$$
If we want $|m_B(x,y)|=1$, then we must have equality in the triangle inequality and we get that $3x,y,(3x+y)\in\bz$. So $(x,y)\in \frac13\bz\times\bz$.
For an extreme cycle $(x,y)$, we must also have that $(x,y)$ is in the attractor of the IFS $(R^T,L)$, and this is contained in $[0,1]\times [-1,1]$ (on can check the invariance of this rectangle for the IFS). So, one can check which of the points in $\frac13\bz\times\bz\cap [0,1]\times[-1,1]$ are in an extreme cycle. The only extreme cycles are $\left\{\begin{bmatrix}0\\0\end{bmatrix}\right\}$,$\left\{\begin{bmatrix}1\\0\end{bmatrix}\right\}$,$\left\{\begin{bmatrix}1\\0\end{bmatrix}\right\}$,$\left\{\begin{bmatrix}0\\1\end{bmatrix}\right\}$,$\left\{\begin{bmatrix}1\\-1\end{bmatrix}\right\}$. Note that these form a complete set of representatives $\mod R^T\bz^2$. The set generated by the extreme cycles is then just $\bz^2$ which is a proper subset of the spectrum $\frac13\bz\times\bz$, so the Hadamard triple is not dynamically simple.
\end{example}

\medskip

\section{Open problems}

The major open problem in the study of Fourier analysis on fractals is to see whether the non-spectral self-affine measures are still frame-spectral. The idea of almost-Parseval-frame towers turns this problem into a problem of matrix analysis. Given an integral expanding matrix $R$ and a set of simple digits $B$ with $N=\#B<|\det R|$, the condition of  almost-Parseval-frame towers can be reformulated equivalently as {\it for any $\epsilon>0$, there exists $n\in{\mathbb N}$ and a set of $L_n\subset {\mathbb Z}^d$ such that the matrix}
$$
{\mathcal F}_n (B_n,L_n) = \frac{1}{\sqrt{N^n}}\left(e^{2\pi i \langle R^{-n} b,\ell\rangle}\right)_{\ell\in L_n,b\in B_n}
$$
{\it satisfies}
$$
(1-\epsilon)\|{\bf w}\|^2\le \|{\mathcal F}_n{\bf w}\|^2\le(1+\epsilon)\|{\bf w}\|^2
$$
{\it for any vectors ${\bf w}\in {\mathbb C}^{N^n}$. (Recall that $B_n = B+RB+...+R^{n-1}B$)}

\medskip

We observe that if we let ${\overline{B_n}}$ and $\overline{L_n}$ be respectively the complete representative class  (mod $R^n({\mathbb Z}^d))$ and  (mod $(R^T)^n({\mathbb Z}^d))$. Then the matrix
$$
\overline{{\mathcal F}_n}: = \frac{1}{\sqrt{|\det R|^n}}\left(e^{2\pi i \langle R^{-n} b,\ell\rangle}\right)_{\ell\in \overline{L_n},b\in \overline{B_n}}
$$
forms a unitary matrix. i.e.
$$
\|\overline{{\mathcal F}_n}{\bf w}\| = \|{\bf w}\|, \ \forall {\bf w}\in{\mathbb C}^{|\det R|^n}
$$
 As $B_n\subset {\overline{B_n}}$, we can take the vectors ${\bf w}$ such that they are zero on the coordinates which are not in $B_n$. This implies that
 $$
\| {\mathcal F}_n(B_n,\overline{L_n}){\bf w}\| = \frac{|\det R|^n}{N^n}\|{\bf w}\|.
 $$
 In other words,
 $$
 \sum_{\lambda\in \overline{L_n}}\left|\sum_{b\in B_n} w_b \frac{1}{\sqrt{N^n}}e^{-2\pi i \langle R^{-n}b,\lambda\rangle}\right|^2 = \frac{|\det R|^n}{N^n}\sum_{b\in B_n}|w_b|^2.
 $$
 This shows that the collection of vectors $\{ \left(\frac{1}{\sqrt{N^n}}e^{-2\pi i \langle R^{-n}b,\lambda\rangle}\right)_{b\in  B_n}:\lambda\in \overline{L_n}\}$ forms a tight frame for ${\mathbb C}^{N^n}$ with frame bound $\frac{|\det R|^n}{N^n}$. Our problem is to extract a subset $L_n$ from $\overline{L_n}$ such that we have an almost tight frame with frame constant nearly 1. This reminds us about the Kadison-Singer problem that was open for over 50 years and solved recently in \cite{MSS}.

 \medskip

\begin{theorem}\cite[Corollary 1.5]{MSS}
Let $r$ be a positive integer and let $u_1,...,u_m\in{\mathbb C}^d$ such that
$$
\sum_{i=1}^m|\langle w, u_i\rangle|^2 = \|w\|^2 \ \forall w\in {\mathbb C}^d
$$
and $\|u_i\|\leq \delta$ for all $i$. Then there exists a partition $S_1,...,S_r$ of $\{1,...,m\}$ such that
$$
\sum_{i\in S_j}|\langle w, u_i\rangle|^2 \le  \left(\frac{1}{\sqrt{r}}+\sqrt{\delta}\right)^2\|w\|^2 \ \forall w\in {\mathbb C}^d.
$$
 \end{theorem}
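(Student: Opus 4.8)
The plan is to derive this statement---Corollary~1.5 of \cite{MSS}---from the main probabilistic theorem of Marcus--Spielman--Srivastava by a tensoring argument, and then to indicate the route to that probabilistic theorem. \emph{Reduction.} It suffices to prove: if $\xi_1,\dots,\xi_n$ are independent, finitely supported random vectors in $\mathbb{C}^N$ with $\sum_{i=1}^n\mathbb{E}[\xi_i\xi_i^*]=I_N$ and $\mathbb{E}\|\xi_i\|^2\le\epsilon$ for all $i$, then
\[
\mathbb{P}\!\left[\Big\|\textstyle\sum_{i=1}^n\xi_i\xi_i^*\Big\|\le(1+\sqrt\epsilon)^2\right]>0 .
\]
Granting this, work in $\mathbb{C}^{rd}=\mathbb{C}^d\otimes\mathbb{C}^r$, let $e_1,\dots,e_r$ be the standard basis of $\mathbb{C}^r$, and set $\xi_i:=\sqrt r\,(u_i\otimes e_{c(i)})$ where $c(1),\dots,c(m)$ are i.i.d. uniform on $\{1,\dots,r\}$. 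Then $\mathbb{E}[\xi_i\xi_i^*]=(u_iu_i^*)\otimes I_r$, so $\sum_i\mathbb{E}[\xi_i\xi_i^*]=(\sum_iu_iu_i^*)\otimes I_r=I_{rd}$, while $\mathbb{E}\|\xi_i\|^2=r\|u_i\|^2\le r\delta$ (reading the hypothesis as $\|u_i\|^2\le\delta$, as in \cite{MSS}, under which the stated constant is the sharp one). A realization of $(c(i))_i$ is exactly a partition $S_j=\{i:c(i)=j\}$, and for it $\sum_i\xi_i\xi_i^*$ is block diagonal with $j$-th block $r\sum_{i\in S_j}u_iu_i^*$; hence the displayed event forces $\|\sum_{i\in S_j}u_iu_i^*\|\le\tfrac1r(1+\sqrt{r\delta})^2=(\tfrac1{\sqrt r}+\sqrt\delta)^2$ for every $j$ simultaneously.

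\emph{Interlacing families.} For the probabilistic statement, write each $\xi_i$ as taking values $v_{i,1},\dots,v_{i,k_i}$ with probabilities $p_{i,j}$, and for a multi-index $\mathbf{j}=(j_1,\dots,j_n)$ put $q_{\mathbf{j}}(x)=\det\!\big(xI-\sum_{i=1}^n v_{i,j_i}v_{i,j_i}^*\big)$. The structural claim is that $\{q_{\mathbf{j}}\}$, with the product weights $\prod_i p_{i,j_i}$, forms an \emph{interlacing family}: every partial weighted average is real-rooted and any two of them have a common interlacer. A standard argument on interlacing families then produces a single $\mathbf{j}$ with $\lambda_{\max}(q_{\mathbf{j}})\le\lambda_{\max}(\mathbb{E}\,q_{\mathbf{j}})$, $\lambda_{\max}(\cdot)$ denoting the largest root. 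Since $\lambda_{\max}(q_{\mathbf{j}})=\|\sum_i\xi_i\xi_i^*\|$ on the realization indexed by $\mathbf{j}$, which has positive probability, it remains only to bound the largest root of the expected polynomial.

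\emph{Expected polynomial and barrier bound.} One identifies $\mathbb{E}\,q_{\mathbf{j}}$ with the mixed characteristic polynomial of $A_i:=\mathbb{E}[\xi_i\xi_i^*]$,
\[
\mu[A_1,\dots,A_n](x)=\Big(\textstyle\prod_{i=1}^n(1-\partial_{z_i})\Big)\det\!\Big(xI+\sum_{i=1}^n z_iA_i\Big)\Big|_{z=0},
\]
which is real-rooted because $\det(xI+\sum_iz_iA_i)$ is real stable and each operator $1-\partial_{z_i}$ preserves real stability. The largest root is then controlled by the multivariate barrier method: start from $\det(xI+\sum_iz_iA_i)$ at the point $(t,\dots,t)$ with $t$ well above the spectrum, track the barrier functions $\Phi_i^p=\partial_{z_i}\log p$ for the polynomials $p$ obtained after successive applications of $1-\partial_{z_i}$, and show that each such application pushes the relevant roots up by an amount governed only by $\operatorname{tr}A_i=\mathbb{E}\|\xi_i\|^2\le\epsilon$; optimizing over $t$ gives $\lambda_{\max}\le(1+\sqrt\epsilon)^2$.

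\emph{Main obstacle.} The genuinely hard part is the third paragraph: the real-stability calculus (in particular that $1-\partial_{z_i}$ preserves real stability, hence that mixed characteristic polynomials are real-rooted) together with the quantitative barrier estimates that deliver precisely the constant $(1+\sqrt\epsilon)^2$ rather than a lossy bound. By contrast, the interlacing-family claim is a comparatively soft consequence of real-rootedness of the partial averages, and the tensoring reduction is purely formal. In a survey of this length I would state the real-stability lemmas and the barrier inequalities as black boxes cited from \cite{MSS}, present the reduction and the interlacing step in full, and keep the barrier argument schematic.
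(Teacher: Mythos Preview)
The paper does not prove this theorem at all: it is quoted verbatim as \cite[Corollary~1.5]{MSS} and used as a black box, with no argument given. Your proposal is therefore not comparable to any proof in the paper; rather, it is a correct and faithful outline of the proof in \cite{MSS} itself --- the tensoring reduction to the random-vector theorem, the interlacing-families argument reducing to the expected characteristic polynomial, and the real-stability/barrier analysis of the mixed characteristic polynomial. You also correctly flag that the hypothesis in \cite{MSS} is $\|u_i\|^2\le\delta$ (the survey's $\|u_i\|\le\delta$ is a transcription slip), which is indeed what makes the constant $(1/\sqrt r+\sqrt\delta)^2$ come out right.
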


 \medskip

 This statement says that we can partition a tight frame into $r$ subsets such that the frame constant of each partition is almost $1/r$. Iterating this process allowed Nitzan et al \cite{NOU} to establish the existence of Fourier frames on any unbounded sets of finite measure. One of their lemmas states:

\begin{lemma}\label{lem6.1}
\cite[Lemma 3]{NOU} Let $A$ be an $K\times L$ matrix and $J\subset \{1,...,K\}$, we denote by $A(J)$ the sub-matrix of $A$ whose rows belong to the index $J$. Then there exist universal constants $c_0,C_0>0$ such that whenever $A$ is a $K\times L$ matrix, which is a sub-matrix of some $K\times K$ orthonormal matrix, such that all of its rows have equal $\ell^2$-norm, one can find a subset $J\subset\{1,...,K\}$ such that
$$
c_0\frac{L}{K}\|{\bf w}\|^2\leq \|A(J){\bf w}\|^2\leq C_0\frac{L}{K}\|{\bf w}\|^2 ,\ \forall {\bf w}\in {\mathbb C}^n.
$$
\end{lemma}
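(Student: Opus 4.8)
The plan is to deduce this from the Marcus--Spielman--Srivastava theorem quoted above, applied \emph{repeatedly}. Write $a_1,\dots,a_K\in\mathbb C^L$ for the rows of $A$ (conjugated so that the relevant sums become outer products). The hypothesis that $A$ is obtained from a $K\times K$ orthonormal matrix by keeping $L$ of its columns says exactly that $\sum_{i=1}^K a_ia_i^*=I_L$, i.e. $\{a_i\}_{i=1}^K$ is a Parseval frame for $\mathbb C^L$, and the equal-norm hypothesis then forces $\|a_i\|^2=L/K$ for all $i$. For $J\subseteq\{1,\dots,K\}$ put $T_J:=\sum_{i\in J}a_ia_i^*$, so that $\|A(J)w\|^2=\langle T_Jw,w\rangle$ and $T_{J'}\preceq T_J$ whenever $J'\subseteq J$. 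Fix a small universal constant $\delta_*>0$, to be pinned down at the end. If $L/K\geq\delta_*$, simply take $J=\{1,\dots,K\}$: then $T_J=I_L$, and $\|w\|^2$ lies between $\tfrac LK\|w\|^2$ and $\tfrac1{\delta_*}\tfrac LK\|w\|^2$, which is the claim in that range. So from now on assume $L/K<\delta_*$.

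The main construction is an iteration producing a decreasing chain $\{1,\dots,K\}=I_0\supseteq I_1\supseteq\cdots\supseteq I_k$. Write $m_j:=\lambda_{\min}(T_{I_j})$, $M_j:=\lambda_{\max}(T_{I_j})$ and $\delta_j:=L/(m_jK)$; initially $T_{I_0}=I_L$, so $m_0=M_0=1$ and $\delta_0=L/K$. Given $I_j$ with $T_{I_j}\succ0$, the vectors $u_i:=T_{I_j}^{-1/2}a_i$ ($i\in I_j$) form a Parseval frame for $\mathbb C^L$ with $\|u_i\|^2=a_i^*T_{I_j}^{-1}a_i\leq m_j^{-1}\|a_i\|^2=\delta_j$. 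Applying the Marcus--Spielman--Srivastava theorem with $r=2$ and $\delta=\sqrt{\delta_j}$ splits $I_j=I_{j+1}\sqcup I_j'$ so that each piece $S$ obeys $\sum_{i\in S}u_iu_i^*\preceq\alpha_j I_L$ with $\alpha_j:=(\tfrac1{\sqrt2}+\delta_j^{1/4})^2=\tfrac12(1+\sqrt2\,\delta_j^{1/4})^2$; conjugating by $T_{I_j}^{1/2}$ this reads
\[
(1-\alpha_j)\,T_{I_j}\ \preceq\ T_{I_{j+1}}\ \preceq\ \alpha_j\,T_{I_j},
\]
the lower bound because $T_{I_{j+1}}=T_{I_j}-\sum_{i\in I_j'}a_ia_i^*\succeq(1-\alpha_j)T_{I_j}$. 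As long as $\delta_j\leq\delta_*$ is small enough that $\alpha_j\leq\tfrac34$, this step is legitimate ($T_{I_{j+1}}\succ0$) and gives $(1-\alpha_j)m_j\leq m_{j+1}$, $M_{j+1}\leq\alpha_j M_j$, $m_{j+1}\leq M_{j+1}$. We always keep the piece $I_{j+1}$ (either half works), and we fix $k$ in advance to be the least integer with $2^{-k}<L/(\delta_* K)$, so that $k\asymp\log_2(K/L)$, $2^{-k}\asymp L/K$ and $2^{k/4}(L/K)^{1/4}\leq(2\delta_*)^{1/4}$. Setting $J:=I_k$ will be the answer.

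The heart of the matter is a bootstrap guaranteeing $\delta_j\leq\delta_*$ at every step $j<k$ (so all of the above is valid) while keeping the accumulated distortion bounded. Iterating the one-step inequalities gives $M_j\leq\prod_{i<j}\alpha_i$ and $m_j\geq\prod_{i<j}(1-\alpha_i)$; using $\alpha_i=\tfrac12(1+\sqrt2\,\delta_i^{1/4})^2$ and, for $\delta_i\leq\delta_*$, the elementary bounds $1+y\leq e^y$, $1-y\geq e^{-2y}$, one obtains
\[
2^{-j}e^{-C\Sigma_j}\ \leq\ m_j\ \leq\ M_j\ \leq\ 2^{-j}e^{C\Sigma_j},\qquad \Sigma_j:=\sum_{i<j}\delta_i^{1/4},
\]
with $C$ universal. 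Feeding $m_j\geq 2^{-j}e^{-C\Sigma_j}$ into $\delta_j=L/(m_jK)$ gives $\delta_j^{1/4}\leq 2^{j/4}e^{(C/4)\Sigma_j}(L/K)^{1/4}$; summing the geometric factors $2^{i/4}$ over $i\leq j<k$ and using $2^{k/4}(L/K)^{1/4}\leq(2\delta_*)^{1/4}$ yields a self-improving estimate $\Sigma_{j+1}\leq\kappa\,\delta_*^{1/4}\,e^{(C/4)\Sigma_j}$ with $\kappa$ universal. Choosing $\delta_*$ small enough that $\kappa\,\delta_*^{1/4}e^{C/4}\leq1$ (and that $\alpha_j\leq\tfrac34$), this forces $\Sigma_j\leq1$ for all $j\leq k$ by induction on $j$; hence $\delta_j\leq\delta_*$ throughout, the whole iteration is valid, $m_j,M_j=\Theta(2^{-j})$ with universal constants, and in particular $m_k,M_k=\Theta(L/K)$. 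Therefore $J=I_k$ satisfies $c_0\tfrac LK\|w\|^2\leq\langle T_Jw,w\rangle=\|A(J)w\|^2\leq C_0\tfrac LK\|w\|^2$ for all $w\in\mathbb C^L$, with $c_0,C_0$ universal, which together with the case $L/K\geq\delta_*$ is the assertion.

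The hard part is precisely this last step: a single application of the Marcus--Spielman--Srivastava theorem halves the frame bounds only up to a multiplicative error $1+O(\delta_j^{1/4})$, and $\delta_j$ \emph{grows} geometrically along the iteration because $m_j\downarrow$; what rescues the argument is that, with the stopping time placed at $m_k\asymp L/(\delta_* K)$, the sum $\sum_j\delta_j^{1/4}$ is dominated by a convergent geometric series and so stays below a universal constant. Arranging the simultaneous induction -- valid step at level $j$ $\Rightarrow$ bound on $\delta_{j+1}$ $\Rightarrow$ valid step at level $j+1$ -- so that all constants come out universal is the one delicate point; everything else is the routine linear algebra above.
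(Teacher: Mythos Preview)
The paper does not prove this lemma at all; it is simply quoted as \cite[Lemma 3]{NOU} and then used as a black box in Proposition \ref{prop1.3}. Your argument --- repeated halving via the Marcus--Spielman--Srivastava theorem with $r=2$, renormalizing by $T_{I_j}^{-1/2}$ at each stage to restore the Parseval condition, and a bootstrap showing that the accumulated multiplicative errors $\sum_j\delta_j^{1/4}$ stay bounded by a universal constant --- is exactly the proof given in the cited reference, and it is correct. One cosmetic point: the induction as written yields $\delta_j\leq 2\delta_*e^{C}$ rather than $\delta_j\leq\delta_*$, so the threshold used to trigger the trivial case $J=\{1,\dots,K\}$ should be decoupled from the threshold guaranteeing $\alpha_j\leq\tfrac34$; you already flag this as ``the one delicate point,'' and choosing $\delta_*$ small enough that $2\delta_*e^{C}$ lies below the latter threshold handles it.
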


\medskip
This lemma leads naturally to the following:

\begin{proposition}\label{prop1.3}
With $(R,B)$ as in Definition \ref{defifs}, there exist  universal constants $0< c_0< C_0<\infty$ such that for all $n$, there exists $J_n$ such that
$$
c_0\sum_{b\in B_n}|w_b|^2\leq \sum_{\lambda\in J_n}\left|\frac{1}{\sqrt{N^n}}\sum_{b\in B_n}w_be^{-2\pi i \langle R^{-n}b, \lambda\rangle}\right|^2\leq C_0\sum_{b\in B_n}|w_b|^2
$$
for all $(w_b)_{b\in B_n}\in{\mathbb C}^{N^n}$.
\end{proposition}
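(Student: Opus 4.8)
The plan is to apply Lemma \ref{lem6.1} (the Nitzan--Olevskii--Ulanovskii lemma) directly to the matrix coming from the complete residue systems. First I would set up the right matrix: let $\overline{B_n}$ and $\overline{L_n}$ be complete sets of representatives $(\mod R^n(\bz^d))$ and $(\mod (R^T)^n(\bz^d))$ respectively. As recalled in the ``Open problems'' discussion above, the matrix $\overline{{\mathcal F}_n} = \frac{1}{\sqrt{|\det R|^n}}\left(e^{2\pi i \langle R^{-n}b,\ell\rangle}\right)_{\ell\in\overline{L_n},\,b\in\overline{B_n}}$ is a $|\det R|^n\times|\det R|^n$ unitary matrix. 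Since $B\subseteq\overline{B_n}$ (after reducing $B_n$ modulo $R^n(\bz^d)$; here one uses that $B$ is a simple digit set so that the $N^n$ elements of $B_n$ are genuinely distinct residues), I would take $A$ to be the $|\det R|^n\times N^n$ submatrix of $\overline{{\mathcal F}_n}$ consisting of those columns indexed by $B_n$. This $A$ is a submatrix of a unitary matrix, and every one of its rows has the same $\ell^2$-norm, namely $\sqrt{N^n}/\sqrt{|\det R|^n}$, because each entry has modulus $1/\sqrt{|\det R|^n}$ and each row has exactly $N^n$ entries. So the hypotheses of Lemma \ref{lem6.1} are met with $K=|\det R|^n$ and $L=N^n$.

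Next I would invoke Lemma \ref{lem6.1} to extract a subset $J\subseteq\{1,\dots,|\det R|^n\}$ --- which I relabel as a subset $J_n\subseteq\overline{L_n}$ --- such that
$$
c_0\frac{N^n}{|\det R|^n}\|{\bf v}\|^2\leq \|A(J_n){\bf v}\|^2\leq C_0\frac{N^n}{|\det R|^n}\|{\bf v}\|^2,\quad \forall {\bf v}\in{\mathbb C}^{N^n}.
$$
Here $A(J_n){\bf v} = \left(\frac{1}{\sqrt{|\det R|^n}}\sum_{b\in B_n}v_b e^{2\pi i\langle R^{-n}b,\lambda\rangle}\right)_{\lambda\in J_n}$. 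To match the normalization in the statement (which has $\frac{1}{\sqrt{N^n}}$ rather than $\frac{1}{\sqrt{|\det R|^n}}$ in the exponential sums), I would simply multiply through by $\frac{|\det R|^n}{N^n}$: writing ${\bf w}=(w_b)_{b\in B_n}$ and noting $\widehat{e_\lambda}$ versus $e_{-\lambda}$ only flips a sign in the frequency (which does not affect the $\ell^2$ bounds, so one may replace $J_n$ by $-J_n$ or $R^{-n}$ by $(R^{-n})$ as convenient), one gets
$$
c_0\sum_{b\in B_n}|w_b|^2 \leq \sum_{\lambda\in J_n}\left|\frac{1}{\sqrt{N^n}}\sum_{b\in B_n}w_b e^{-2\pi i\langle R^{-n}b,\lambda\rangle}\right|^2 \leq C_0\sum_{b\in B_n}|w_b|^2,
$$
which is exactly the claimed inequality, with the same universal constants $c_0,C_0$ furnished by Lemma \ref{lem6.1} (independent of $n$).

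The only genuinely delicate points --- and I expect the ``row-norm'' bookkeeping to be the main thing to get right rather than a deep obstacle --- are: (a) verifying that the map $b\mapsto b\pmod{R^n(\bz^d)}$ is injective on $B_n$, so that $B_n$ really embeds in $\overline{B_n}$ and the columns of $A$ are the columns of a unitary matrix; this follows because $B$ is simple for $R$ (and hence $B_n$ is simple for $R^n$, an elementary induction using $B_n = B + RB_{n-1}$); and (b) confirming that the entries of $\overline{{\mathcal F}_n}$ all have the \emph{same} modulus so that equal row lengths genuinely holds --- which is immediate since every entry is a root of unity times $|\det R|^{-n/2}$. One should also remark that the universality of $c_0,C_0$ in $n$ is the whole point: it is what later allows the passage to a Bessel/frame statement for $\mu$ in the limit. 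I would present the argument in essentially the three steps above: (1) exhibit the unitary $\overline{{\mathcal F}_n}$ and its column-submatrix $A$; (2) check the equal-row-norm hypothesis and apply Lemma \ref{lem6.1}; (3) rescale to the stated normalization.
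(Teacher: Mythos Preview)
Your proposal is correct and follows essentially the same route as the paper: form the unitary matrix $\overline{\mathcal F_n}$ on complete residue systems, take the $|\det R|^n\times N^n$ column-submatrix indexed by $B_n$, apply Lemma~\ref{lem6.1} with $K=|\det R|^n$, $L=N^n$, and then rescale by $|\det R|^n/N^n$ to pass from the $|\det R|^{-n/2}$ normalization to the $N^{-n/2}$ normalization in the statement. If anything, you are more careful than the paper in explicitly checking the equal-row-norm hypothesis and in noting that the injectivity of $B_n\pmod{R^n(\bz^d)}$ requires $B$ to be a simple digit set for $R$ --- a hypothesis the paper's proof uses tacitly.
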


\begin{proof} Let
$$
{\mathcal F}_n = \frac{1}{|\det R|^{n/2}}\left[e^{2\pi i \langle R^{-n}b,\ell\rangle}\right]_{\ell\in \overline{L}_n,b\in\overline{B}_n}
$$
where $\overline{B}_n$ is a complete coset representative (mod $R({\mathbb Z}^d)$) containing $B_n$ and  $\overline{L}_n$ is a complete coset representative (mod $R^T({\mathbb Z}^d)$). It is well known that ${\mathcal F}_n$ is an orthonormal matrix. Let $K = |\det R|^n$ and
$$
A_n = \frac{1}{|\det R|^{n/2}}\left[e^{2\pi i \langle R^{-n}b,\ell\rangle}\right]_{\ell\in \overline{L}_n,b\in B_n}.
$$
Then $A_n$ is a sub-matrix of ${\mathcal F}_n$ whose columns are exactly the ones with index in $B_n$ so that the size $L$ is $L = N^n$. By Lemma \ref{lem6.1}, we can find universal constants $c_0,C_0$, independent of $n$, such that for some $J_n\subset \overline{L}_n$, we have
$$
c_0\frac{N^n}{|\det R|^n}\|{\bf w}\|^2\leq \|A(J_n){\bf w}\|^2\leq C_0\frac{N^n}{|\det R|^n}\|{\bf w}\|^2 ,\ \forall {\bf w}\in {\mathbb C}^{N^n}.
$$
As $\frac{|\det R|^{n/2}}{N^{n/2}}A(J_n) = \frac{1}{|\det R|^{n/2}}\left[e^{2\pi i \langle R^{-n}b,\ell\rangle}\right]_{\ell\in J_n,b\in B_n}: = F_n$, this shows
$$
c_0\|{\bf w}\|^2\leq \|F_n{\bf w}\|^2\leq C_0\|{\bf w}\|^2 ,\ \forall {\bf w}\in {\mathbb C}^{N^n}.
$$
This is equivalent to the inequality we stated.
\end{proof}

This proposition shows that there always exists some subsets $J_n$ in which the norm  of ${\mathcal F}(B_n,J_n)$ is uniformly bounded by universal constants $c_0, C_0$, this indicates that the existence of almost-Parseval-frame pairs $B_n,L_n$ is possible.

 \begin{acknowledgements}
This work was partially supported by a grant from the Simons Foundation (\#228539 to Dorin Dutkay).
\end{acknowledgements}

\bibliographystyle{alpha}	
\bibliography{eframes}

\end{document}